\documentclass[a4paper,reqno]{amsart}

\usepackage[french,british]{babel}
\usepackage{amssymb,amsmath,amsthm,amscd,mathrsfs,graphicx,color,multicol,booktabs,bm,manfnt}
\usepackage[cmtip,all,matrix,arrow,tips,curve]{xy}

\usepackage{mathabx}

\usepackage{manfnt}
\usepackage{centernot}   


\newcommand{\CC}{{\mathbb C}}
\newcommand{\cA}{{\mathscr A}}

\newcommand{\cB}{{\mathscr B}}
\newcommand{\cC}{{\mathscr C}}

\newcommand{\cE}{{\mathscr E}}
\newcommand{\cF}{{\mathscr F}}
\newcommand{\cG}{{\mathscr G}}
\newcommand{\cH}{{\mathscr H}}
\newcommand{\cI}{{\mathscr I}}
\newcommand{\cJ}{{\mathscr J}}
\newcommand{\cK}{{\mathscr K}}
\newcommand{\cL}{{\mathscr L}}
\newcommand{\cM}{{\mathscr M}}
\newcommand{\cN}{{\mathscr N}}

\newcommand{\cO}{{\mathscr O}}

\newcommand{\cS}{{\mathscr S}}

\newcommand{\cX}{{\mathscr X}}

\newcommand{\dra}{\dashrightarrow}

\newcommand{\Ext}{\text{Ext}}

\newcommand{\hra}{\hookrightarrow}
\newcommand{\KK}{{\mathbb K}}
\newcommand{\la}{\langle}

\newcommand{\lra}{\longrightarrow}

\newcommand{\NN}{{\mathbb N}}
\newcommand{\ov}{\overline}
\newcommand{\PP}{{\mathbb P}}
\newcommand{\QQ}{{\mathbb Q}}
\newcommand{\ra}{\rangle}
\newcommand{\RR}{{\mathbb R}}

\newcommand{\vv}{{\bf v}}

\newcommand{\wt}{\widetilde}
\newcommand{\ww}{{\bf w}}
\newcommand{\ZZ}{{\mathbb Z}}
\newcommand{\Gr}{\mathrm{Gr}}

%
%
%

%
%
%


%

\theoremstyle{plain}

\newtheorem{thm}{Theorem}[section]
\newtheorem*{thm*}{Theorem}

\newtheorem{clm}[thm]{Claim}

\newtheorem{crl}[thm]{Corollary}

\newtheorem*{hyp*}{Hypothesis}
\newtheorem{lmm}[thm]{Lemma}
\newtheorem{prp}[thm]{Proposition}
\newtheorem{prp-dfn}[thm]{Proposition-Definition}

\theoremstyle{definition}

\newtheorem{ass}[thm]{Assumption}
\newtheorem{dfn}[thm]{Definition}

\theoremstyle{remark}

\newtheorem{expl}[thm]{Example}

\newtheorem*{qst*}{Main Question}
\newtheorem{rmk}[thm]{Remark}


\DeclareMathOperator{\Amp}{Amp}

\DeclareMathOperator{\Aut}{Aut}

\DeclareMathOperator{\ch}{ch}

\DeclareMathOperator{\cl}{cl}

\DeclareMathOperator{\Def}{Def}

\DeclareMathOperator{\disc}{disc}

\DeclareMathOperator{\End}{End}

\DeclareMathOperator{\Hom}{Hom}

\DeclareMathOperator{\Kum}{Kum}

\DeclareMathOperator{\NS}{NS}

\DeclareMathOperator{\Pic}{Pic}

\DeclareMathOperator{\sing}{sing}

\DeclareMathOperator{\Sym}{Sym}
\DeclareMathOperator{\td}{Td}

\DeclareMathOperator{\Tr}{Tr}





\usepackage{multirow}


\setcounter{section}{0}
 \setcounter{tocdepth}{1}
 \makeindex
 \begin{document}
 \title{Moduli of sheaves on $K3$'s  and  higher dimensional HK varieties}
 \author{Kieran G. O'Grady}
\dedicatory{Dedicated to Shing-Tung Yau on the occasion of his 70th birthday}
\date{\today}
\thanks{Partially supported by PRIN 2017}
\begin{abstract}
We review a proof of the well know result stating that moduli spaces of stable sheaves with fixed Chern character  on a polarized $K3$ surface are deformations of a hyperk\"ahler variety of Type $K3^{[n]}$ (if a suitable numerical hypothesis is satisfied). In a recent work we have adapted  that proof in order to prove results on moduli of vector bundles on polarized hyperk\"ahler varieties of Type $K3^{[2]}$ - this is the content of the second part of the paper.

\end{abstract}

  \maketitle
\bibliographystyle{amsalpha}
\section{Introduction}\label{sec:intro}
\setcounter{equation}{0}
\subsection{Background and general outline}\label{subsec:retromotivi}
\setcounter{equation}{0}
Vector bundles, or more generally coherent sheaves, on $K3$ surfaces play a prominent r\^ole in Algebraic Geometry. We mention a few instances of interesting results obtained by studying sheaves on $K3$'s.
First moduli spaces of semistable 
sheaves on polarized $K3$ surfaces provide, possibly after desingularization,   models for \lq\lq half\rq\rq\ of the known deformation classes of (compact) hyperk\"ahler manifolds - the other half being provided by moduli spaces of sheaves on abelian surfaces. As shown by Mukai, also zero-dimensional moduli spaces are interesting. In fact  rigid stable vector bundles of rank  greater than $1$ allow to describe explicitly the  ideal sheaf of  embedded $K3$ surfaces of certain genera for which the surface is not a complete intersection.
 Lastly, we recall Lazarsfeld's proof of the Giesker-Petri Theorem via vector bundles on $K3$'s. 

Since $K3$ surfaces are the hyperk\"ahler (HK) surfaces, one is naturally led to investigate moduli of sheaves on polarized HK varieties. Recently we have proposed to focus on torsion-free sheaves on a HK whose discriminant (if $\cF$ is a vector bundle, the discriminant is equal to $-c_2(\cF^{\vee}\otimes \cF)$) satisfies a certain condition, and we named such sheaves modular. By way of example any torsion free  sheaf on a $K3$ surface is modular, and a sheaf on an arbitrary HK manifold $X$ whose discriminant is a multiple of $c_2(X)$ is modular (but this condition is in no way necessary for a sheaf to be modular). In~\cite{ogmodvb} we proved an existence and uniqueness result for slope-stable modular sheaves on HK's of Type $K3^{[2]}$ 
(i.e.~deformations of the Hilbert square of a $K3$ surface) which is analogous to well known results valid for sheaves on $K3$'s.  

In fact we extended to moduli spaces of sheaves on HK's of Type $K3^{[2]}$  a strategy   that was employed more than $20$ years ago in order to prove results on moduli spaces of sheaves (of arbitrary dimension) on $K3$'s. The idea is to specialize the polarized HK to one which carries a Lagrangian fibration (i.e.~an elliptic $K3$ if the dimension is $2$), and then to relate (semi)stability of a sheaf  on a Lagrangian HK to stability of its restriction to a generic Lagrangian fiber.
The  latter holds for a sheaf $\cF$ on an elliptic $K3$ surface, provided the polarization is close to the fiber of the fibration, because of the well-known decomposition of the ample cone $\Amp(S)_{\RR}$ into connected open chambers with the property that stability (or non stability) of $\cF$ is constant for polarizations belonging to the same chamber. The open chambers are the connected components of the complement of the union of walls $\xi^{\bot}\cap \Amp(S)_{\RR}$, where $\xi\in H^{1,1}_{\ZZ}(S)$  are the classes such that $-r(\cF)^2\Delta(\cF)\le 4\xi^2<0$.  A torsion-free sheaf $\cF$ on a HK variety is modular  exactly if the variation of $h$ slope-stability of $\cF$ behaves as in the $2$ dimensional case.

Another remarkable consequence of modularity is the following. Let $X$ be a HK with a Lagrangian fibration, and let $\cF$ be a modular vector bundle on $X$ 
whose restriction  to a generic Lagrangian fiber  is slope-stable. Then the restriction of $\cF$ to a generic fiber is a semi homogeneous vector bundle, and hence it has no infinitesimal deformations fixing the determinant. It follows that the strategy outlined above in dimensions higher than $2$ has a strict   resemblance to that which has been implemented in the case of $K3$ surfaces. In fact a slope-stable vector bundle on an elliptic curve  has by default no infinitesimal deformations fixing the determinant, while this  certainly does not hold for general vector bundles on abelian surfaces or higher dimensional abelian varieties - it holds exactly for semi homogeneous ones. 

In the present paper we  carry out the strategy outlined above both in the familiar setting of moduli spaces of sheaves on $K3$ surfaces and in the  new setting of moduli spaces of modular vector bundles on HK's of Type $K3^{[2]}$. 
\subsection{Moduli spaces of sheaves on $K3$ surfaces}
\setcounter{equation}{0}
We start by recalling basic definitions and results on moduli of sheaves on a smooth projective polarized surface $(S,h)$ over an algebraically closed field $\KK$. Let   $\cF$  be a  torsion-free sheaf on $S$. A non zero proper subsheaf  $0\not=\cE\subsetneq\cF$ is \emph{GM destabilizing} (where GM stands for Gieseker-Maruyama) if for $m\gg 0$  one has
\begin{equation}\label{gmdestab}
\frac{\chi(S;\cE(mh))}{r(\cE)}\ge \frac{\chi(S;\cF(mh))}{r(\cF)},
\end{equation}
it is \emph{GM desemistabilizing} if the above inequality is strict (for $m\gg 0$).
The sheaf $\cF$ is stable if  no destabilizing subsheaf exists, and it is semistable if  no desemistabilizing subsheaf exists. 

Isomorphism classes of GM stable sheaves are well-behaved, and one gets projective moduli schemes by adding points representing GM semistable sheaves. More precisely, let $\xi=r+\xi_1+\xi_2$, where $r$ is a positive integer, $\xi_1\in  H^{1,1}_{\ZZ}(S)$ and $\xi_2\in H^{4}(S;\ZZ)$. A  classical Theorem of Gieseker and Maruyama~\cite{pippo} states that there exists a quasi-projective coarse moduli scheme (over $\KK$) $\cM_{\xi}(S,h)$ for GM stable torsion-free sheaves $\cF$ on $S$ such that 
\begin{equation}\label{datifascio}
r(\cF)=r,\quad c_1(\cF)=\xi_1,\quad c_2(\cF)=\xi_2.
\end{equation}
 It is often the case that $\cM_{\xi}(S,h)$  is not projective, but it is always an open subscheme of the projective moduli scheme 
$\ov{\cM}_{\xi}(S,h)$ obtained by adding \emph{S-equivalence} classes of torsion-free GM semistable sheaves on $S$ with the same rank and Chern classes. (S-equivalence is a relation weaker than isomorphism.) We remark that  $\cM_{\xi}(S,h)$ might not be dense in $\ov{\cM}_{\xi}(S,h)$. 

Now let us assume that the field $\KK$ has characteristic $0$. If $[\cF]\in\cM_{\xi}(S,h)$ is a point representing a GM stable sheaf $\cF$, then the \emph{expected dimension} of $\cM_{\xi}(S,h)$ at $[\cF]$ is given by
\begin{equation}\label{expdim}
\Delta(\cF)-(r^2-1)\chi(S,\cO_s)+q_S,
\end{equation}
where 
\begin{equation}\label{dueversioni}
\Delta(\cF):=2r c_2(\cF)-(r-1) c_1(\cF)^2=-2r \ch_2(\cF)+\ch_1(\cF)^2.
\end{equation}
is the \emph{discriminant} of $\cF$ (identified with an integer) and $q_S=h^1(S,\cO_S)$ is the irregularity of $S$.  
The reason for naming \lq\lq expected dimension\rq\rq\ the quantity in~\eqref{expdim} is that it equals the actual dimension of 
$\cM_{\xi}(S,h)$ at $[\cF]$ if the trace map 
\begin{equation}\label{traccia}
\Ext^2(\cF,\cF)\overset{\Tr}{\lra} H^2(S,\cO_S)
\end{equation}
 is an isomorphism - by Serrre duality this is equivalent to the hypothesis that every homomorphism $\cF\to\cF\otimes K_S$ is scalar multiplication by a global section of $K_S$.  In fact the Artamkin-Mukai Theorem \cite{artamkin,muksympl} gives that if~\eqref{traccia}  is an isomorphism then  the sheaf is $\cF$ is unobstructed and hence $\cM_{\xi}(S,h)$ is smooth at $[\cF]$ with Zariski tangent space identified with $\Ext^1(\cF,\cF)$ (because the germ of $\cM_{\xi}(S,h)$  at $[\cF]$ is identified with the deformation space of  $\cF$). 

If $S$ is a  surface of general type, and the expected  dimension of $\cM_{\xi}(S,h)$ is non zero,  the moduli space 
$\cM_{\xi}(S,h)$ may very well be empty, or non empty but reducible, or non empty of dimenison greater than  the expected one. 

By contrast, if $S$ is a $K3$ surface then $\cM_{\xi}(S,h)$ behaves extremely well. One beautiful feature is that if $[\cF]\in \cM_{\xi}(S,h)$ then the trace map in~\eqref{traccia} is surjective. In fact by stability every homomorphism $\cF\to\cF$ is scalar multiplication by a constant, and since  $K_S$ is trivial this implies that the map in~\eqref{traccia} is an isomorphism. Hence 
$\cM_{\xi}(S,h)$ is smooth of the expected dimension (we are assuming that it is  non empty). Moreover Serre duality 
$$\Ext^1(\cF,\cF)\times \Ext^1(\cF,\cF)\to H^2(S,\cO_S)$$
 defines a skew-symmetric regular form on $\cM_{\xi}(S,h)$ which is closed. In particular $\cM_{\xi}(S,h)$ has even dimension (again we are assuming that it is  non empty), we let $n(\xi)$ be half its dimension.

We will state a result  which describes more precisely  $\cM_{\xi}(S,h)$ when  GM semistability coincides with GM stability.

First we need to discuss  GM semistability versus GM stability. Let $S$ be a projective $K3$ surface, and let 
$\xi=r+\xi_1+\xi_2$. 
Then there is  a locally finite union of rational of $\xi$-walls in $\Amp(X)_{\RR}$ (a rational wall is the intersection $\lambda^{\bot}\cap \Amp(X)_{\RR}$ where $\lambda\in H^{1,1}_{\ZZ}(S)$), such that if $h$ is a polarization outside the union of the $\xi$-walls then the following holds. Let  $\cF$ be a torsion-free  sheaf such that~\eqref{datifascio} holds, which is 
$h$ GM semistable but not not $h$ GM stable; then $\ch(\cF)$ is not primitive (notice that $\ch(\cF)$ is integral because the intersection form on $S$ is even), i.e.~there exists an integer $m\ge 2$ such that $\frac{\ch(\cF)}{m}$ is integral. In particular, if 
$\ch(\cF)$ is primitive then $\cM_{\xi}(S,h)=\ov{\cM}_{\xi}(S,h)$. A polarization outside the union of the $\xi$-walls is called $\xi$-generic. (See Subsection~\ref{murocella} for the definition of walls.)

Below is a well known fundamental result on moduli spaces of sheaves on $K3$ surfaces.
\begin{thm}[Mukai~\cite{mukvb}, Huybrechts-G\"ottsche~\cite{huygot}, O'Grady~\cite{ogwt2}, Yoshioka~\cite{yoshioka-vbs-on-ell-surfcs,yoshiexpls,yoshitwist}]\label{modonk3}
Let $S$ be a projective $K3$ surface. Let $\xi$ be as above and suppose that $r+\xi_1+\frac{1}{2}\xi_1^2-\xi_2$ is indivisible. 
If $h$ is a $\xi$-generic polarization of $S$ then 
$\cM_{\xi}(S,h)$ is non empty, irreducible, smooth and projective, of the expected dimension
\begin{equation*}
2n(\xi):=2r \xi_2-(r-1)\xi_1^2-2(r^2-1).
\end{equation*}
If $\cM_{\xi}(S,h)$ is not zero dimensional (i.e.~$n(\xi)>0$) then $\cM_{\xi}(S,h)$  is a HK variety deformation equivalent to $S^{[n(\xi)]}$.
\end{thm}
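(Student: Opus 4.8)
The plan is to separate the local assertions (smoothness and the dimension count), which follow from deformation theory on a $K3$, from the global ones (non-emptiness, irreducibility and the identification of the deformation type), which I would obtain by the Lagrangian-fibration strategy sketched in the introduction.

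\smallskip

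First I would dispose of smoothness and the dimension. If $[\cF]\in\cM_{\xi}(S,h)$ then $\cF$ is GM stable, hence simple, so $\Hom(\cF,\cF)=\KK$; since $K_S\cong\cO_S$, Serre duality identifies this with $\Ext^2(\cF,\cF)$ and shows that the trace map~\eqref{traccia} is an isomorphism. By the Artamkin-Mukai Theorem $\cF$ is then unobstructed, so $\cM_{\xi}(S,h)$ is smooth with tangent space $\Ext^1(\cF,\cF)$, of the expected dimension~\eqref{expdim}. Substituting $\chi(S,\cO_S)=2$, $q_S=0$, and $\Delta(\cF)=2r\xi_2-(r-1)\xi_1^2$ gives exactly $2n(\xi)$. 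Moreover, since $r+\xi_1+\frac12\xi_1^2-\xi_2$ is indivisible the Mukai vector $v:=(r,\xi_1,r+\frac12\xi_1^2-\xi_2)$ is primitive, and as $h$ is $\xi$-generic the wall-and-chamber discussion preceding the theorem gives that GM semistability coincides with GM stability; hence $\cM_{\xi}(S,h)=\ov{\cM}_{\xi}(S,h)$ is projective. With the Mukai pairing one has $2n(\xi)=\langle v,v\rangle+2$, so $\langle v,v\rangle=2n(\xi)-2$.

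\smallskip

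The global statements I would prove by reduction to a model case. The model is $v_0=(1,0,1-n)$ with $n=n(\xi)$: a rank-one torsion-free sheaf with these invariants is the ideal sheaf of a length-$n$ subscheme, so $\cM_{v_0}(S,h)\cong S^{[n]}$, which by Fogarty and Beauville is non-empty, irreducible, smooth and HK of Type $K3^{[n]}$. To transport this to an arbitrary primitive $v$ with $\langle v,v\rangle=2n-2$ I would specialize $(S,h)$ to an elliptic $K3$ with a section and take $h$ in a chamber close to the fiber class $f$. For such $h$ the criterion relating GM stability on $S$ to stability of the restriction to the generic fiber applies, and a relative Fourier-Mukai transform along the fibration converts GM-stable sheaves of Mukai vector $v$ into GM-stable sheaves of Mukai vector $v'$, where $v'$ is the image of $v$ under an isometry of the Mukai lattice. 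Composing the autoequivalences so produced (together with twists by line bundles and dualization), which act through the isometry group and preserve $\langle\cdot,\cdot\rangle$ and primitivity, I would move $v$ to $v_0$; each step induces an isomorphism of the corresponding moduli spaces for suitably chosen polarizations, yielding non-emptiness, irreducibility and smoothness for $v$ from the model.

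\smallskip

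Finally, to pin down the deformation type for the \emph{given} $(S,h)$ rather than the elliptic one, I would argue that as $(S,h)$ varies in a connected family with $v$ kept of type $(1,1)$, primitive, and $h$ kept $\xi$-generic, the relative moduli space is smooth and proper over the base, so all fibers are deformation equivalent; connectedness of the relevant polarization/period domain then links the general $(S,h)$ to the elliptic model, giving that $\cM_{\xi}(S,h)$ is deformation equivalent to $S^{[n(\xi)]}$ when $n(\xi)>0$. The main obstacle I anticipate is the control of stability through the transform and specialization steps: one must check that the polarization remains $\xi$-generic and close enough to $f$ for the fiberwise criterion to hold, and that the Fourier-Mukai transform carries stable sheaves to stable sheaves without crossing spurious walls. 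This wall-crossing bookkeeping on the elliptic $K3$, not the deformation theory, is where the real work lies.
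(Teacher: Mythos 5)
Your opening and closing steps track the paper: smoothness and the dimension count via Artamkin--Mukai exactly as in Section 3, and the final spreading-out over a connected family via a smooth proper relative moduli space exactly as in Subsection 5.4. Where you genuinely diverge is the middle step. The paper never invokes a Fourier--Mukai transform and never assumes the elliptic fibration has a section (it works with genus-one fibrations, applying Atiyah's theorem to the generic fiber over $\CC(\PP^1)$); instead it produces a rigid bundle $\cF$ with $v(\cF)^2=-2$ stable on \emph{every} fiber by Atiyah plus elementary modifications, and then constructs a degree-one dominant rational map $T^{[n]}\dra\cM_{\vv}(S,h)$, where $T=\ov{\Pic}^{d_0}(S/\PP^1)$ is a relative compactified Picard $K3$, by performing elementary modifications of $\cF$ along $n$ elliptic fibers; Huybrechts' theorem on birational HK's then fixes the deformation type. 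Your route -- reduce to $v_0=(1,0,1-n)$ by autoequivalences of the derived category of an elliptic $K3$ with a section -- is Yoshioka's strategy, and it is a legitimate alternative in principle.

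However, there is a genuine gap, and it sits exactly at the point where the theorem is hardest. You claim that relative Fourier--Mukai transforms, twists by line bundles, and dualization suffice to move any primitive positive-rank $v$ to $v_0$. This is false. Write $d=\xi_1\cdot f$ for the fiber degree. A twist by a line bundle $L$ changes $d$ by $r\,(L\cdot f)$, a multiple of $r$; the relative Fourier--Mukai transform acts on the pair $(r,d)$ as $(r,d)\mapsto(d,-r)$ up to sign; dualization sends $(r,d)\mapsto(r,-d)$. Hence $\gcd(r,d)$ is an invariant of the group of isometries you allow, and a rank-one vector can be reached only if $\gcd(r,d)=1$. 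A concrete primitive vector that gets stuck is $v=(2,0,-1)$ (so $v^2=4$, $n(\vv)=3$): every move you permit keeps rank and fiber degree both even, so $v$ never reaches rank one for \emph{any} specialization of the surface. This is precisely why the paper proves the theorem only under the stronger hypothesis that $r+\xi_1$ is indivisible (i.e.\ $r$ coprime to the divisibility of $\xi_1$) and explicitly defers the general primitive case to Yoshioka's Mukai reflections: the reflection $w\mapsto w+\la w,(1,0,1)\ra\,(1,0,1)$ associated to the spherical object $\cO_S$ sends $(2,0,-1)$ to $(1,0,-2)$ and unsticks the algorithm, but it is induced by a spherical twist, not by any of the functors in your list, and one must separately prove it preserves stability for suitable polarizations. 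So as written your argument proves the same special case as the paper's own proof and no more; to get the stated generality you must enlarge your group of autoequivalences to include spherical twists and redo the stability bookkeeping for them, which is the actual content of the cited Yoshioka papers.
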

If $r=1$ then $\cM_{\xi}(S,h)$ parametrizes sheaves $\cI_Z\otimes L$ where $L$ is the line bundle such that $c_1(L)=\xi_1$, and  hence $\cM_{\xi}(S,h)$ is   is trivially  isomorphic to $S^{[n(\xi)]}$. The real interest of Theorem~\ref{modonk3} lies in the case $r\ge 2$. If $r\ge 2$ then in general $\cM_{\xi}(S,h)$ is \emph{not}  birational to $T^{[n(\xi)]}$ for any $K3$ surface $T$.

We will go through the proof of Theorem~\ref{modonk3} under the hypothesis that $r+\xi_1$ is  indivisible i.e.~$r$ and the maximum integer dividing $\xi_1$ are coprime. The reason is that the  proof under these hypotheses will be extended to modular sheaves on HK's. The statement of Theorem~\ref{modonk3} for $r+\xi_1$ divisible can be obtained from the result for $r+\xi_1$ indivisible via Mukai reflections, see~\cite{yoshiexpls,yoshitwist}. In this respect we make the following a comment. First Mukai reflections for higher dimensional HK's do not make sense. Secondly a proof of Theorem~\ref{modonk3} 
for $r+\xi_1$ divisible might possibly be obtainable by extending the method adopted for $r+\xi_1$ indivisible. Such an extension might indicate how to prove results for moduli of modular sheaves on  HK varieties (say of Type $K3^{[2]}$) which go beyond those discussed in Theorem~\ref{unicita}. 

\begin{rmk}
If $n(\xi)>0$ there is a beautiful description of the Hodge structure of $H^2(\cM_{\xi}(S,h))$ and its Beauville-Bogomolov-Fujiki quadratic form (due to Mukai)  in terms of the Mukai vector associated to $\xi$. This is a key result, which has proved to be of great relevance for the development of the theory of HK manifolds. We will not discuss its proof.
\end{rmk}
\subsection{Modular sheaves}\label{grandef}
\setcounter{equation}{0}
Let  $\cF$ be a rank $r$ torsion-free sheaf on a manifold $X$. The \emph{discriminant} $\Delta(\cF)\in H^{2,2}_{\ZZ}(X)$ is defined by~\eqref{dueversioni}. 
Below is our key definition. 
\begin{dfn}\label{effemod}
Let $X$ be a HK manifold of dimension $2n$, and let $q_X$ be its  Beauville-Bogomolov-Fuiki (BBF) bilinear symmetric form. A torsion free sheaf $\cF$ on $X$ is \emph{modular} if 
there exists $d(\cF)\in\QQ$ such that
\begin{equation}\label{fernand}
\int_X \Delta(\cF)\smile \alpha^{2n-2}=d(\cF) \cdot (2n-3)!! \cdot q_X(\alpha,\alpha)^{n-1}
\end{equation}
   for all $\alpha\in H^2(X)$.
\end{dfn}
\begin{expl}\label{modsuk3}
If $X$ is a $K3$ surface then the equality in~\eqref{fernand} holds with $d(\cF)=\Delta(\cF)$. Hence every torsion free sheaf on a $K3$ surface is modular. By way of contrast tautological vector bundles on $S^{[n]}$ where $S$ is a $K3$ surface, are not modular in general if $n\ge 2$ - see Example 2.6 in~\cite{ogmodvb}.
\end{expl}
\begin{rmk}\label{semphodge}
Let $X$ be a HK variety of dimension $2n$. Let $D(X)\subset H(X)$  be the image of the map $\Sym H^2(X)\to H(X)$ defined by cup-product. Let $D^i(X):=D(X)\cap H^i(X)$. The  pairing $D^i(X)\times D^{4n-i}(X)\to\CC$ defined by intersection product is non degenerate~\cite{verbcohk,bogcohk,rhag}, hence there is a splitting $H(X)= D(X)\oplus D(X)^{\bot}$, where orthogonality is with respect to the intersection pairing. 
Now let  $\cF$ be a torsion free sheaf on $X$. Then $\cF$ is modular if and only if the orthogonal projection of $\Delta(\cF)$ onto $D^4(X)$ is a multiple of the class $q_X^{\vee}$ dual to $q_X$.
 In particular  $\cF$ is modular if $\Delta(\cF)$ is a multiple of $c_2(X)$. 
\end{rmk}
\begin{rmk}\label{caroverb}
Let $X$ be a  HK of Type $K3^{[2]}$. Then $H(X)= D(X)$ (notation as in Remark~\ref{semphodge}). It follows that  a vector bundle $\cF$ on $X$  is modular if and only if  $\Delta(\cF)$ is a multiple of $c_2(X)$. It follows~\cite{verbhyper} that if $\cF$ is a modular vector bundle, slope-stable for a polarization $h$, then $\End_0(\cF)$ is hyperholomorphic on $(X,h)$, where  $\End_0(\cF)$ is  the vector bundle of traceless endomorphisms of $\cE$. More generally, on an arbitrary HK polarized variety $(X,h)$ there should be 
a relation between the property of being modular and that of being hyperholomorphic. 
\end{rmk}
\begin{rmk}\label{deltanti}
Let $X$ be a HK manifold of dimension $2n$, and let $\cF$ be a torsion free  modular sheaf on $X$.  
Then 
\begin{equation}\label{sbrodolo}
\int\limits_{X}\Delta(\cF)\smile\alpha_1\smile\ldots\smile \alpha_{2n-2}=d(\cF)\cdot\wt{\sum}\, q_X(\alpha_{i_1},\alpha_{i_2})\cdot\ldots\cdot q_X(\alpha_{i_{2n-3}},\alpha_{i_{2n-2}}),
\end{equation}
for all $\alpha_1,\ldots, \alpha_{2n}\in H^2(X)$, where $\wt{\sum}$ means that in the summation we avoid repeating addends which are formally equal (i.e.~are equal modulo reordering of the factors   $q_X(\cdot,\cdot)$'s and switching the entries in $q_X(\cdot,\cdot)$). In fact both sides of the equation in~\eqref{sbrodolo} are multilinear symmetric maps $H^2(X)^{2n-2}\to\CC$, and  by~\eqref{fernand}  they  give the same polynomial when computed on $(\alpha,\alpha,\ldots,\alpha)$. Hence they are both the polarization of the same polynomial, and thus equal. 
\end{rmk}
\subsection{Moduli spaces of modular sheaves on HK's of Type $K3^{[2]}$}\label{risprin}
\setcounter{equation}{0}
We state our recent result~\cite{ogmodvb} on  
modular sheaves with certain discrete invariants on projective HK's of Type $K3^{[2]}$. 

First we recall the discrete invariants indicizing  moduli spaces of polarized 
HK's  of Type $K3^{[2]}$. Let $(X,h)$ be one such polarized HK (we emphasize that the ample class $h\in H^{1,1}_{\ZZ}(X)$ is primitive). Then either
\begin{equation}\label{divuno}
q(h,H^2(X;\ZZ))=\ZZ,\quad q(h)=e>0, \quad e\equiv 0\pmod{2}
\end{equation}
or
\begin{equation}\label{divdue}
q(h,H^2(X;\ZZ))=2\ZZ,\quad q(h)=e>0,\quad e\equiv 6\pmod{8}.
\end{equation}
Conversely, if $e$ is a positive integer which is even (respectively congruent to $6$ modulo $8$) there exists $(X,h)$ 
such that~\eqref{divuno}  (respectively~\eqref{divdue}) holds. 
Let $\cK_{e}^1$ be the moduli space of polarized
 HK's $(X,h)$ of Type $K3^{[2]}$ such that~\eqref{divuno} holds, and let $\cK_{e}^2$ be the moduli space of polarized
 HK's $(X,h)$ of Type $K3^{[2]}$ such that~\eqref{divdue} holds. Both $\cK_{e}^1$ and $\cK_{e}^2$ are irreducible. 
\begin{thm}\label{unicita}
Let $i\in\{1,2\}$ and let $r_0,e$ be positive integers such that  $r_0\equiv i\pmod{2}$ and
\begin{equation}\label{econ}
e\equiv
\begin{cases}
4r_0-10 \pmod{8r_0} & \text{if $r_0\equiv 0 \pmod{4}$,} \\
\frac{1}{2}(r_0-5) \pmod{2r_0} & \text{if $r_0\equiv 1 \pmod{4}$,} \\
-10 \pmod{8r_0}  & \text{if $r_0\equiv 2 \pmod{4}$,} \\
-\frac{1}{2}(r_0+5) \pmod{2r_0}  & \text{if $r_0\equiv 3 \pmod{4}$.}
\end{cases}
\end{equation}
Suppose that $[(X,h)]\in\cK^i_e$  is a generic point. Then up to isomorphism there exists one and only one $h$ slope-stable vector bundle $\cE$ on $X$ such that
 \begin{equation}\label{ele}
r(\cE)=r_0^2,\quad c_1(\cE)=\frac{r_0}{i} h,\quad \Delta(\cE)  =  \frac{r(\cE)(r(\cE)-1)}{12}c_2(X).
\end{equation}
Moreover $H^p(X,End_0\cE)=0$ for all $p$. 
\end{thm}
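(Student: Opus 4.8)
The plan is to follow the specialization-to-a-Lagrangian-fibration strategy sketched in the introduction, transplanting to Type $K3^{[2]}$ the argument behind Theorem~\ref{modonk3}. Since $\cK^i_e$ is irreducible and all three assertions (existence, uniqueness, and the vanishing $H^p(X,\End_0\cE)=0$) are, via deformation theory and semicontinuity, controlled by the behaviour at a single conveniently chosen point, I would first reduce to proving everything at one special HK fourfold $X_0\in\cK^i_e$ and then propagate the conclusion to the generic point. For $X_0$ I would take a Type $K3^{[2]}$ fourfold carrying a Lagrangian fibration $\pi\colon X_0\to\PP^2$, whose existence in the relevant lattice-polarized locus is guaranteed by surjectivity of the period map; I write $f=\pi^*\cO_{\PP^2}(1)$ for the fiber class and arrange the polarization $h$ to lie close to $f$ in the ample cone. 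The smooth fibers $A=\pi^{-1}(p)$ are then abelian surfaces, with Poincar\'e dual class $f^2$.

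The first substantive step is the fiberwise analysis. Because $f$ is the class of a Lagrangian fibration it is $q_{X_0}$-isotropic, so applying the modularity identity~\eqref{fernand} with $n=2$ and $\alpha=f$ gives $\int_{X_0}\Delta(\cE)\smile f^2=d(\cE)\,q_{X_0}(f,f)=0$, whence $\int_A\Delta(\cE|_A)=0$ and therefore $\Delta(\cE|_A)=0$ in $H^4(A)\cong\CC$. By Mukai's theory this means $\cE|_A$ is semihomogeneous, i.e.\ $\ch(\cE|_A)=r_0^2\exp(c_1(\cE|_A)/r_0^2)$. Conversely, to build a candidate I would fix on each smooth fiber the simple semihomogeneous bundle of rank $r_0^2$ whose determinant matches $\tfrac{r_0}{i}h|_A$; Mukai's classification produces exactly such a bundle, and the perfect-square rank is forced because the denominator of the slope $c_1/r$ is $r_0$. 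This is precisely where the congruence~\eqref{econ} enters: it is the integrality condition making $c_1(\cE|_A)^2/(2r_0^2)$ an integral class and $\tfrac{r_0}{i}h$ a legitimate first Chern class, so that the fiberwise bundles exist with the prescribed invariants. A simple semihomogeneous bundle is stable and satisfies $H^q(A,\End_0(\cE|_A))=0$ for all $q$.

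Next I would globalize. The candidate $\cE_0$ on $X_0$ is built by a relative Fourier--Mukai transform over $\PP^2$, applied to a suitable (possibly twisted) line bundle on the dual Lagrangian fibration, arranged so that the output is locally free with $r(\cE_0)=r_0^2$, $c_1(\cE_0)=\tfrac{r_0}{i}h$ and $\Delta(\cE_0)=\tfrac{r(\cE_0)(r(\cE_0)-1)}{12}c_2(X_0)$; modularity of $\cE_0$ is automatic since $H(X_0)=D(X_0)$ for Type $K3^{[2]}$ (Remark~\ref{caroverb}), so the only constraint is that $\Delta(\cE_0)$ be a multiple of $c_2(X_0)$. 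For slope-stability I would invoke the higher-dimensional analogue of the ample-cone chamber structure explained in the introduction: for $h$ close enough to the isotropic class $f$, $h$-slope-stability of a modular sheaf on $X_0$ is detected by stability of its restriction to the generic fiber. Since $\cE_0|_A$ is simple semihomogeneous, hence stable, $\cE_0$ is $h$-slope-stable.

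Finally I would prove the cohomology vanishing and deform. The fiberwise vanishing $H^q(A,\End_0(\cE_0|_A))=0$ shows that the higher direct images $R^q\pi_*\End_0\cE_0$ vanish over the locus of smooth fibers; after controlling the contribution of the singular fibers one obtains $R^q\pi_*\End_0\cE_0=0$ for all $q$, whence $H^p(X_0,\End_0\cE_0)=0$ by Leray. In particular $H^2(X_0,\End_0\cE_0)=0$ is the fixed-determinant obstruction space $\Ext^2(\cE_0,\cE_0)_0$, so $\cE_0$ is unobstructed with fixed determinant, while $H^1(X_0,\End_0\cE_0)=0$ makes it infinitesimally rigid modulo its determinant. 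Hence the moduli space of pairs $(X,\cE)$ with the given invariants and fixed determinant is smooth at $[(X_0,\cE_0)]$ and its forgetful map to $\cK^i_e$ is \'etale there; irreducibility of $\cK^i_e$ yields existence at the generic point, and uniqueness follows by combining this rigidity with a specialization argument (any stable bundle with these invariants on a generic $X$ degenerates, by properness of the relative moduli of stable sheaves, to a semistable sheaf on $X_0$, which the fiberwise classification pins down as $\cE_0$). The vanishing $H^p(X,\End_0\cE)=0$ then persists by semicontinuity. I regard two points as the genuine obstacles: making precise and proving the higher-dimensional chamber statement that $h$-slope-stability near the isotropic fiber class is equivalent to fiberwise stability (exactly the content modularity is designed to supply, and which must hold uniformly enough to survive the singular fibers); and, harder still, the global vanishing $H^p(X_0,\End_0\cE_0)=0$, since the fiberwise computation controls only the smooth locus, so the real work is to show the singular fibers contribute nothing to the higher direct images and then to carry the vanishing, together with existence and uniqueness, along the deformation to the Picard-rank-one generic point.
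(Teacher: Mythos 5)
Your overall scaffolding (specialize to a Lagrangian fibration, use modularity to force the restriction to a smooth fiber to be semi homogeneous, relate $h$ slope-stability near the isotropic class $f$ to fiberwise stability, then deform to the generic point of $\cK^i_e$) does match the paper's strategy, and you correctly identify the two hardest points. But both of those points are handled in your proposal by arguments that do not work, and in one case by exactly the argument the paper singles out as a dead end. Your candidate bundle $\cE_0$ is produced by a relative Fourier--Mukai transform ``arranged so that'' the output has rank $r_0^2$, $c_1=\tfrac{r_0}{i}h$ and $\Delta=\tfrac{r(r-1)}{12}c_2$: this is precisely the failed approach described in Subsection~\ref{subsec:fallito}. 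If one builds $\cE_0$ by prescribing its restriction to the generic fiber and then extending over $\PP^2$ (by semistable reduction, a Fourier--Mukai kernel, or otherwise), one loses control of the end product: there is no reason the result is modular, i.e.\ that its discriminant is a multiple of $c_2(X_0)$, and no reason that $H^2(X_0,\End_0\cE_0)=0$. Your Leray argument for the vanishing hits the exact obstacle explained there: fiberwise vanishing of $H^q(A,\End_0(\cE_0|_A))$ holds only where the restriction is simple; if the non-simple locus contains a divisor $D\subset\PP^2$, then $R^1\pi_*\End_0\cE_0$ is supported on $D$ and $H^1(\PP^2,R^1\pi_*\End_0\cE_0)$ can contribute to $H^2(X_0,\End_0\cE_0)$. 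The paper circumvents all of this by a completely different construction: it takes a spherical bundle $\cF$ on an elliptic $K3$ surface whose restriction to every elliptic fiber is stable (Proposition~\ref{rigsuk}), forms $\cF[2]^{\pm}$ on $S^{[2]}$ (Subsection~\ref{subsec:dibase}), verifies modularity and the discriminant identity by direct computation (Proposition~\ref{yaufever}), obtains the cohomology vanishing from the McKay correspondence via $\Ext^{*}(\cF^{\pm},\cF^{\pm})\cong\Sym^2\Ext^{*}(\cF,\cF)$ (Proposition~\ref{prp:seirigido}), and only then deforms away from the Hilbert square (Propositions~\ref{chiave} and~\ref{buonacompt}).

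The uniqueness step has a second, independent gap. You claim that a competitor bundle on the generic $X$ degenerates to a sheaf on $X_0$ which ``the fiberwise classification pins down as $\cE_0$''. On an abelian \emph{surface}, unlike on an elliptic curve, a simple semi homogeneous bundle is \emph{not} determined by its rank and determinant: by Proposition~\ref{quadrato} the set of line bundles $\xi$ with $\cE\otimes\xi\cong\cE$ has cardinality exactly $r(\cE)^2$, so twisting by a torsion line bundle outside this set produces a non-isomorphic bundle with the same rank, determinant and $c_2$. The paper flags this precise issue at the end of Subsection~\ref{subsec:semihom} (``the analogue of Proposition~\ref{enerzero} \dots requires an additional argument'') and resolves it with a monodromy argument: the set of twists identifying two candidates over a generic fiber $X_t$ is invariant under the monodromy of $\pi_1(U,t)$, and that monodromy is large enough (Proposition~\ref{modinv}, proved via Markman's description of $\Pic^0(X/\PP^2)$) to force the set to equal $A_t[r_0]$, hence to contain $0$, giving Item~(2) of Proposition~\ref{propriostab}. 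Finally, your appeal to ``properness of the relative moduli of stable sheaves'' is unavailable (that moduli problem is not proper); the paper instead uses that the relative moduli space $f\colon\cM_e(r_0)\to T^i_e$ is of finite type, so its image is constructible, and combines this with the Zariski density in $T^i_e$ of the union of the divisors $m^{-1}(\cN_e^i(d)^{\rm good})$ for $d\gg 0$ to propagate existence and uniqueness from the Noether--Lefschetz loci to the generic point.
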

\begin{rmk}
Let  $\cE$ be a modular torsion-free sheaf on a hyperk\"ahler manifold $X$ of Type $K3^{[2]}$. Then $r(\cE)$ divides the square of a generator of the ideal 
$\{q_X(c_1(\cE),\alpha) \mid \alpha\in H^2(X;\ZZ)\}$, see Proposition~\ref{restrango}. In Theorem~\ref{unicita} we consider the extremal case in which $r(\cE)$ equals  the square of a generator of the ideal defined above.
\end{rmk}
\begin{rmk}
Let $[(X,h)]\in\cK^2_6$ be  generic. Then $(X,h)$ is isomorphic to the variety of lines $F(Y)$ on a generic cubic hypersurface  $Y\subset\PP^5$ polarized by the Pl\"ucker embedding, and  the vector bundle $\cE$  of Theorem~\ref{unicita} with $r_0=2$ is isomorphic to the restriction  of the tautological quotient vector bundle on $\Gr(2,\CC^6)$. Similarly, let $[(X,h)]\in\cK^2_{22}$ be generic. Then $(X,h)$ is isomorphic to the Debarre-Voisin variety 
associated to a generic $\sigma\in \bigwedge^3 V_{10}^\vee$, where 
  $V_{10}$ is a $10$ dimensional complex vector space, and 
\begin{equation}\label{eqDV}
X_\sigma:=\{[W]\in \Gr(6,V_{10}) \mid \sigma\vert_{ W}=0\}.
\end{equation}
 The vector bundle $\cE$  of Theorem~\ref{unicita} with $r_0=2$ is isomorphic to the restriction to $X_\sigma$ of the tautological quotient vector bundle on $\Gr(6,V_{10})$. 
\end{rmk}
\begin{rmk}
The proof of Theorem~\ref{unicita} that we will give provides a blueprint for the proof of similar results for HK varieties of other deformation types. We are working on such results for $4$ dimensional  HK varieties of Kummer type.
\end{rmk}
\subsection{Notation and a few well know results}\label{subsec:ginevra}
\setcounter{equation}{0}
\begin{enumerate}
\item[$\bullet$] 
Algebraic variety is sinonimous of complex quasi projective variety (not necessarily irreducible), unless we state the contrary. 
\item[$\bullet$] 
If  $\pi\colon X\to Y$ is a fibered variety, then  \emph{a} generic fiber of
 $\pi$ is $\pi^{-1}(y)$ for $y$ in  a dense open subset of $Y$, while  \emph{the} generic fiber of
 $\pi$ is the scheme $X\times_{\CC(Y)}$ obtained from $\pi\colon X\to Y$ by base change.  
\item[$\bullet$] 
Let $X$ be a smooth complex quasi projective variety and $\cF$ a  coherent sheaf  on $X$. We only consider  topological  Chern classes   $c_i(\cF)\in H^{2i}(X(\CC);\ZZ)$.
\item[$\bullet$] 
Abusing notation we say that a smooth projective variety $X$ is an  abelian variety if it is isomorphic to the variety underlying an abelian variety $A$. In other words 
$X$ is a torsor of $A$. 
\item[$\bullet$] 
Let $X$ be a HK manifold. We let $q_X$, or simply $q$, be the BBF symmetric bilinear form of $X$. We recall that $q_X$ is strictly positive on K\"ahler classes.
\item[$\bullet$] 
Let $X$ be a HK manifold of dimension $2n$.
We let $c_X$ be the \emph{normalized Fujiki constant of $X$}, i.e.~the rational positive number such that for all $\alpha\in H^2(X)$ we have
\begin{equation}\label{relfuj}
\int\limits_{X}\alpha^{2n}=c_X\cdot (2n-1)!!\cdot q_X(\alpha)^n.
\end{equation}
A hyperk\"ahler (HK) \emph{variety} is a projective compact HK manifold. 
\item[$\bullet$] 
Let 
\begin{equation}\label{mappamu}
\mu\colon H^2(S)\to H^2(S^{[n]}) 
\end{equation}
be the composition of the  natural symmetrization map $H^2(S)\to H^2(S^{(n)})$ and the pull-back $H^2(S^{(n)})\to H^2(S^{[n]})$ defined by  the Hilbert-Chow map $S^{[n]}\to S^{(n)}$.
\item[$\bullet$] 
Let $(X,h)$ be an irreducible polarized projective variety. 
Let $\cF$ be a torsion-free sheaf on $X$. A  subsheaf  $\cE\subset\cF$ is \emph{slope-destabilizing} if $0<r(\cE)<r(\cF)$ and $\mu_h(\cE)\ge \mu_h(\cF)$, where $r(\cE),r(\cF)$ are the ranks of $\cE,\cF$, and $\mu_h(\cE),\mu_h(\cF)$ are the $h$-slopes of $\cE,\cF$.   If $\mu_h(\cE)> \mu_h(\cF)$ then 
$\cE\subset\cF$ is \emph{slope-desemistabilizing}. We use similar terminology for exact sequences $0\to\cE\to\cF\to \cG\to 0$. The sheaf $\cF$ is 
\emph{slope-stable} if it has no destabilizing subsheaf, and it is \emph{slope-semistable} if it has no desemistabilizing subsheaf. The slope of a torsion-free sheaf $\cF$ on an irreducible curve  does not depend on the polarization: we will denote it 
by $\mu(\cF)$.
\item[$\bullet$] 
A torsion-free sheaf  on a polarized variety $(X,h)$ is \emph{strictly} $h$ slope-semistable if it is $h$ slope-semistable but not $h$ slope-stable. 
\item[$\bullet$] 
Let $\cF$ be a torsion-free sheaf on  $(X,h)$. A  subsheaf  $0\not=\cE\subsetneq\cF$ is \emph{GM destabilizing} (GM stands for Gieseker-Maruyama) if  $p_{\cE}(k)\ge p_{\cF}(k)$ for $k>>0$, where for a sheaf $\cG$ of non zero rank $p_{\cG}(k):=\frac{\chi(X,\cG\otimes\cO_X(k))}{r(\cG)}$ is the \emph{normalized} Hilbert polynomial. The  subsheaf  $\cE$ is \emph{GM desemistabilizing}  if  $p_{\cE}(k)> p_{\cF}(k)$ for $k>>0$. The sheaf $\cF$ is 
\emph{GM stable} if it has no destabilizing subsheaf, and it is \emph{GM semistable} if it has no desemistabilizing subsheaf. The moduli space of $S$-equivalence classes of GM torsion-free sheaves with a fixed Chern character is a projective scheme (a GM stable sheaf is $S$-equivalent to a GM semistable sheaf only if they are  isomorphic). Slope-stability implies GM stability, and GM semistability implies slope-semistability. In particular the moduli space of slope-stable sheaves is an open subscheme of the moduli space of GM semistable sheaves.  If $X$ is an irreducible curve (semi)stability is independent of the polarization, and therefore we will make no mention of the polarization. 
\end{enumerate}

\newpage

\section{Variation of stability for modular  sheaves}\label{camere}
\setcounter{equation}{0}
\subsection{Background and overview}\label{backover}
\setcounter{equation}{0}
Let $X$ be an irreducible smooth projective variety, let $\Amp(X)\subset H^{1,1}_{\ZZ}(X)$ be the ample cone, 
and let $\Amp(X)_{\RR}\subset H^{1,1}_{\RR}(X)$ be the real convex hull of the ample cone. The moduli space of $h$ slope-stable 
torsion-free sheaves on $X$ with fixed Chern character $\xi$ depends on the ray spanned by $h$, and hence the question: 
 how does the moduli space vary when $h$ changes?

If $X$ is a surface there is a  decomposition of $\Amp(X)_{\RR}$ in chambers and walls which  gives a first answer to the question above. More precisely, let $r$ and $\Delta$ be the rank and discriminant of sheaves with Chern character $\xi$. A \emph{$\xi$-wall} is given by $\lambda^{\bot}\cap  \Amp(X)_{\RR}$, where 
\begin{equation}\label{corsofrancia}
-\frac{r^2\Delta}{4}\le \int_X \lambda^2<0,\quad \lambda\in H^{1,1}_{\ZZ}(X).
\end{equation}
The set of $\xi$-walls is locally finite and hence the complement of their union is an open subset of  $\Amp(X)_{\RR}$. An \emph{open chamber} is a connected component of the complement (in  $\Amp(X)_{\RR}$) of the union of the $\xi$-walls. The first answer to the question asked above is that for $h\in\cC$ the moduli space of $h$ slope-stable torsion-free sheaves on $X$ with Chern character $\xi$ is independent of  $h$. More precisely this means that if $h_1,h_2\in\cC$ and $\cF$ is a torsion-free sheaf on $X$  such that $\ch(\cF)=\xi$, then $\cF$ is $h_1$ slope-stable if and only if it is $h_2$ slope-stable.

If $X$ is a general variety of dimension greater than $2$ then  the picture is substantially more complex, see for example~\cite{grebmaster}. 

A key observation of~\cite{ogmodvb} is that one gets a similar picture if $X$ is a HK variety and the sheaves that we consider are modular. More precisely, we get an  analogous result if we replace the intersection form on $H^2(X)$ by the BBF quadratic form, and we make a suitable modification 
of the lower bound in~\eqref{corsofrancia} (we replace $\Delta$ by $d(\cF)$ and   we introduce Fujiki's constant $c_X$ in the denominator). We review this result in Subsection~\ref{murocella}, and we sketch the proof.

Next, suppose that $X$ is a HK variety with a Lagrangian fibration  $\pi\colon X\to\PP^n$. Then $f:=\pi^{*}c_1(\cO_{\PP^n}(1))$ is in the closure of the  ample cone. The wall and chamber decomposition of $\Amp(X)_{\RR}$ allows to give a quantitative version of  the principle \lq\lq If a polarization $h$ is close to $f$, then $h$ slope-stability of a sheaf $\cF$ on $X$ is related to slope stability of the restriction of $\cF$ to a generic Lagrangian fiber\rq\rq.  This result, a key ingredient in the proof of the main results of the present paper, is presented in Subsection~\ref{subsec:pazzia}.

\subsection{Walls and chambers decomposition for a modular sheaf}\label{murocella}
\setcounter{equation}{0}
\begin{dfn}
Let $a$ be a positive real number. An \emph{$a$-wall} of $\Amp(X)_{\RR}$ is the intersection 
$\lambda^{\bot}\cap \Amp(X)_{\RR}$, where 
  $\lambda\in H^{1,1}_{\ZZ}(X)$, 
$ -a \le q_X(\lambda)< 0$,
and orthogonality is with respect to the BBF quadratic form $q_X$.
\end{dfn}
As is well-known, the set of $a$-walls  is  locally finite, in particular the union of all the $a$-walls  is closed in $\Amp(X)_{\RR}$. 
\begin{dfn}
An \emph{open $a$-chamber} is a connected component of the complement (in $\Amp(X)_{\RR}$) of   the union of all the $a$-walls. 
\end{dfn}
\begin{rmk}\label{convesso}
An open $a$-chamber is convex.
\end{rmk}
\begin{dfn}\label{adieffe}
Let $X$ be a HK manifold, and let $\cF$ be a modular torsion free sheaf  on $X$.
Then
\begin{equation}\label{esmeralda}
a(\cF):=\frac{r(\cF)^2 \cdot d(\cF) }{4c_X},
\end{equation}
where $d(\cF)$ is as in Definition~\ref{effemod}.
\end{dfn}
\begin{expl}\label{aeffek3}
Let $X$ be a $K3$ surface. Then every torsion free sheaf $\cF$ on $X$ is modular, see Example~\ref{modsuk3}. Since $c_X=1$ and 
$d(\cF)=\Delta(\cF)$, we have
\begin{equation*}
a(\cF):=\frac{r(\cF)^2 \cdot \Delta(\cF) }{4}.
\end{equation*}
\end{expl}
Below is the main result that we discuss in the present subsection.
\begin{prp}\label{campol}
Let $X$ be a  HK variety of dimension $2n$, and let  $\cF$ be a   torsion free modular sheaf  on $X$. Then the following hold:
\begin{enumerate}
\item
Suppose that $h$ is an ample divisor class on $X$ which belongs to an open $a(\cF)$-chamber. If  $\cF$ is  strictly $h$ slope-semistable  there exists an exact sequence
of torsion free non zero sheaves
\begin{equation}\label{farnesina}
0\lra \cE\lra \cF\lra \cG\lra 0
\end{equation}
such that $r(\cF) c_1(\cE)-r(\cE) c_1(\cF)=0$.
\item
Suppose that $h_0,h_1$ are ample divisor classes on $X$  belonging to the same open $a(\cF)$-chamber. Then $\cF$ is 
 $h_0$ slope-stable if and only if it is $h_1$ slope-stable. 
\end{enumerate}
\end{prp}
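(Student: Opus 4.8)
The plan is to exploit the modularity hypothesis to reduce the wall-and-chamber analysis to the same two-dimensional combinatorics that governs the surface case, with the BBF form $q_X$ playing the role of the intersection form. The starting point is that for a strictly $h$ slope-semistable $\cF$ there exists a saturated subsheaf $\cE\subset\cF$ with $0<r(\cE)<r(\cF)$ and $\mu_h(\cE)=\mu_h(\cF)$, i.e.\ $q_X\bigl(r(\cF)c_1(\cE)-r(\cE)c_1(\cF),h\bigr)=0$ after cupping appropriately with $h^{2n-1}$. Set $\lambda:=r(\cF)c_1(\cE)-r(\cE)c_1(\cF)\in H^{1,1}_{\ZZ}(X)$. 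The key point for part (1) is to show that if $\lambda\neq 0$ then $\lambda^{\bot}$ is an $a(\cF)$-wall, which contradicts the assumption that $h$ lies in an open $a(\cF)$-chamber; hence $\lambda=0$, giving the desired exact sequence.

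The heart of the argument is therefore to bound $q_X(\lambda)$ from below, and here the Hodge-index/discriminant inequality enters. First I would establish the sign: since $\lambda$ is a $(1,1)$-class orthogonal to the ample class $h$ (via the slope equality), the Hodge index theorem for the form $q_X$ forces $q_X(\lambda)\le 0$, and $q_X(\lambda)<0$ unless $\lambda=0$. For the lower bound I would invoke the Bogomolov-type inequality for the discriminant of a slope-semistable sheaf together with the modularity identity~\eqref{fernand}: expanding $\Delta(\cF)$ in terms of $\Delta(\cE)$, $\Delta(\cG)$, and the relative term built from $\lambda$, and integrating against $h^{2n-2}$, the modular relation converts the top-degree intersection numbers into expressions in $q_X(\lambda)$ and $q_X(h)$. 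The Bogomolov inequalities $\Delta(\cE)\cdot h^{2n-2}\ge 0$ and $\Delta(\cG)\cdot h^{2n-2}\ge 0$ for the semistable factors then yield precisely the bound
\begin{equation*}
-\frac{r(\cF)^2\, d(\cF)}{4c_X}=-a(\cF)\le q_X(\lambda)<0,
\end{equation*}
which is exactly the defining condition for $\lambda^{\bot}$ to be an $a(\cF)$-wall.

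For part (2), I would argue by contradiction: if $\cF$ were $h_0$ slope-stable but not $h_1$ slope-stable, then along a path in the open chamber from $h_0$ to $h_1$ (available by convexity, Remark~\ref{convesso}) stability must fail at some intermediate polarization, producing a destabilizing $\cE$ with $\mu_h(\cE)\ge\mu_h(\cF)$ there. Applying part (1) at the relevant polarization — or more directly, observing that any destabilizer must have $\mu$ equal to $\mu_{\cF}$ at a wall — forces $\lambda=0$ by the same wall-bound, so $\mu_{h}(\cE)=\mu_{h}(\cF)$ for \emph{every} ample $h$ simultaneously. Thus the (de)stabilizing behaviour of $\cE$ cannot change across a chamber, and stability is constant on the chamber.

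The main obstacle I expect is the lower bound in part (1): one must set up the discriminant decomposition for the three-term sequence~\eqref{farnesina} carefully and verify that the cross-term integrates, via~\eqref{fernand} and its polarized form~\eqref{sbrodolo}, to exactly $\tfrac{r(\cE)r(\cG)}{r(\cF)^2}\cdot(\text{const})\cdot q_X(\lambda)$ with the correct constant $c_X$, so that the Bogomolov positivity of the outer factors produces the sharp bound $-a(\cF)$ rather than a weaker one. Getting this constant to match the definition~\eqref{esmeralda} exactly is where the modularity hypothesis is doing all the work, and it is the step where a naive surface-case computation would fail for $n\ge 2$.
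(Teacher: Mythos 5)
Your proposal is correct and takes essentially the same route as the paper: the Fujiki-relation translation of slopes into $q_X$-pairings, the signature argument on $\NS(X)$ giving $q_X(\lambda_{\cE,\cF})\le 0$ with equality only when $\lambda_{\cE,\cF}=0$, the discriminant decomposition of the destabilizing sequence integrated against $h^{2n-2}$ combined with Bogomolov's inequality and modularity to obtain the sharp lower bound $-a(\cF)$, and for item~(2) the passage to an intermediate strictly slope-semistable polarization together with convexity of the chamber. This is exactly the paper's assembly of Lemma~\ref{comesup}, Proposition~\ref{propsemi} and Proposition~\ref{polint}.
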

Item~(1) of Proposition~\ref{campol} gives that if $\cF$ is strictly slope-semistable for a polarization belonging to an open $a(\cF)$-chamber, then  it is not $h$ slope-stable for \emph{any} polarization $h$, because the exact sequence in~\eqref{farnesina} is slope-destabilizing. Item~(2) of Proposition~\ref{campol} is the result that we presented in Subsection~\ref{backover}.

We proceed to sketch the proof of Proposition~\ref{campol}. First we introduce a piece of notation.  Let 
$\cE,\cF$  be sheaves on an irreducible smooth  variety $X$. We let
\begin{equation}\label{pecora}
\lambda_{\cE,\cF}:=(r(\cF) c_1(\cE)-r(\cE) c_1(\cF))\in H^2(X;\ZZ).
\end{equation}
The lemma below, which follows from Fujiki's relation,  shows that as far as  slope-(semi)stability on a   HK variety  is concerned, the BBF form plays the r\^ole of the intersection form on a surface. We emphasize that in the following lemma we do not assume that the sheaves are modular.
\begin{lmm}\label{comesup}
Let $(X,h)$ be a polarized HK variety, and let $\cE,\cF$  be non zero torsion free sheaves on  $X$.
Then
\begin{enumerate}
\item[(a)]
$\mu_h(\cE)>\mu_h(\cF)$ if and only if $q_X(\lambda_{\cE,\cF},h)>0$.
\item[(b)]
$\mu_h(\cE)=\mu_h(\cF)$ if and only if $q_X(\lambda_{\cE,\cF},h)=0$.
\end{enumerate}
\end{lmm}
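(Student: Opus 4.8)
The plan is to reduce both statements to a single observation about the sign of one intersection integral, and then to evaluate that integral via Fujiki's relation.

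First I would recall that for a torsion-free sheaf $\cG$ on the $2n$-dimensional polarized variety $(X,h)$ the slope is $\mu_h(\cG)=\frac{1}{r(\cG)}\int_X c_1(\cG)\smile h^{2n-1}$. Subtracting $\mu_h(\cF)$ from $\mu_h(\cE)$ and clearing the denominator $r(\cE)r(\cF)>0$, and using the definition $\lambda_{\cE,\cF}=r(\cF)c_1(\cE)-r(\cE)c_1(\cF)$, I obtain
\[
\mu_h(\cE)-\mu_h(\cF)=\frac{1}{r(\cE)r(\cF)}\int_X \lambda_{\cE,\cF}\smile h^{2n-1}.
\]
Since $r(\cE),r(\cF)>0$, the sign of $\mu_h(\cE)-\mu_h(\cF)$ equals the sign of $\int_X \lambda_{\cE,\cF}\smile h^{2n-1}$. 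Both (a) and (b) then follow at once once I show that this sign coincides with the sign of $q_X(\lambda_{\cE,\cF},h)$.

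The key step is a polarized version of the Fujiki relation~\eqref{relfuj}. Writing $\lambda:=\lambda_{\cE,\cF}$ and applying~\eqref{relfuj} to $\alpha=xh+y\lambda$, I would compare the coefficients of the monomial $x^{2n-1}y$ on the two sides to extract
\[
\int_X \lambda\smile h^{2n-1}=c_X\,(2n-1)!!\,q_X(h)^{n-1}\,q_X(h,\lambda).
\]
Concretely, on the left the coefficient of $x^{2n-1}y$ in $\int_X(xh+y\lambda)^{2n}$ is $2n\int_X\lambda\smile h^{2n-1}$; on the right, expanding $q_X(xh+y\lambda)^n=(x^2 q_X(h)+2xy\,q_X(h,\lambda)+y^2 q_X(\lambda))^n$, the coefficient of $x^{2n-1}y$ arises by taking the cross term $2xy\,q_X(h,\lambda)$ from exactly one of the $n$ factors and $x^2 q_X(h)$ from the remaining $n-1$, giving $2n\,q_X(h,\lambda)\,q_X(h)^{n-1}$. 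The common factor $2n$ cancels.

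Finally I would invoke positivity: $h$ is ample, hence a K\"ahler class, so $q_X(h)>0$; together with $c_X>0$ and $(2n-1)!!>0$ this makes the constant $c_X(2n-1)!!\,q_X(h)^{n-1}$ strictly positive. Therefore $\int_X\lambda\smile h^{2n-1}$ and $q_X(\lambda,h)$ have the same sign, which combined with the first step proves (a) and (b) simultaneously. The computation is elementary throughout; the only point requiring genuine care — and the real content of the lemma — is this passage from the intersection integral to the BBF form, namely establishing the polarized Fujiki identity with the correct constant and checking that the constant is positive. The positivity of $q_X$ on the K\"ahler cone is exactly what removes any sign ambiguity, so that the stated equivalences hold.
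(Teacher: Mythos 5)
Your proposal is correct and takes essentially the same route as the paper: both reduce the comparison of slopes to the sign of $\int_X\lambda_{\cE,\cF}\smile h^{2n-1}$, convert that integral into $c_X\cdot(2n-1)!!\cdot q_X(\lambda_{\cE,\cF},h)\cdot q_X(h)^{n-1}$ via Fujiki's relation, and conclude from the positivity of $c_X$ and of $q_X(h)$ for $h$ ample. The only difference is presentational: the paper invokes the polarized (multilinear) form of Fujiki's relation directly, while you re-derive the needed instance by comparing coefficients of $x^{2n-1}y$ after substituting $\alpha=xh+y\lambda_{\cE,\cF}$, which is exactly the standard polarization argument underlying that formula.
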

\begin{proof}
Let  $2n$  be the dimension of $X$. 
Fujiki's relation~\eqref{relfuj} is equivalent to the validity, for all $\alpha_1,\ldots, \alpha_{2n}\in H^2(X)$, of the equality
\begin{equation}\label{polrel}
\int\limits_{X}\alpha_1\smile\ldots\smile \alpha_{2n}=c_X\cdot\wt{\sum}\, q_X(\alpha_{i_1},\alpha_{i_2})\cdot\ldots\cdot q_X(\alpha_{i_{2n-1}},\alpha_{i_{2n}}),
\end{equation}
  where $\wt{\sum}$ means that in the summation we avoid repeating addends which are formally equal i.e.~are equal modulo reordering of the factors   $q_X(\cdot,\cdot)$ and switching the entries in the factors $q_X(\cdot,\cdot)$.

We have $\mu_h(\cE)>\mu_h(\cF)$ if and only if 
$\int_X\lambda_{\cE,\cF}\smile h^{2n-1}>0$, and  by~\eqref{polrel} this holds if and only if 
\begin{equation*}
c_X\cdot(2n-1)!!\cdot q_X(\lambda_{\cE,\cF},h)\cdot q_X(h)^{n-1}>0.
\end{equation*}
Item~(a) follows, because $c_X>0$ and $q_X(h)>0$. 

 We have $\mu_h(\cE)=\mu_h(\cF)$ if and only if 
$\int_X\lambda_{\cE,\cF}\smile h^{2n-1}=0$, and hence   Item~(b) follows again by Fujiki's formula.
\end{proof}
The next proposition is again valid for arbitrary torsion-free sheaves on a HK variety, and the proof, thanks to Lemma~\ref{comesup}, is a replica of the analogous statement valid for sheaves on an arbitrary (smooth projective) surface, see for example the proof of Lemma~4.C.5 in~\cite{huylehnbook}.
\begin{prp}\label{polint}
Let $X$ be a HK variety, and let $h_0,h_1$ be ample divisor classes on $X$. Suppose that $\cF$ is a torsion free sheaf on $X$ which is $h_0$ slope-stable and not $h_1$ slope-stable. Then there exists $h\in(\QQ_{+}h_0+\QQ_{+}h_1)$ such that $\cF$ is strictly $ h$ slope-semistable, i.e.~$\cF$ is $h$ slope-semistable but not $h$ slope-stable. 
\end{prp}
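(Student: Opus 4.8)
The plan is to interpolate linearly between $h_0$ and $h_1$ and to locate the first polarization on the segment at which $\cF$ ceases to be slope-stable, using Lemma~\ref{comesup} to convert slope comparisons into signs of the BBF form. For $t\in[0,1]$ set $h_t:=(1-t)h_0+t h_1$; since $\Amp(X)_{\RR}$ is convex, every $h_t$ is ample, so $q_X(h_t)>0$ and Lemma~\ref{comesup} applies to each $h_t$. It suffices to test slope-stability against saturated subsheaves $\cE\subset\cF$ with $0<r(\cE)<r(\cF)$, because saturating a subsheaf only increases $c_1$, hence its slope. For such an $\cE$, Lemma~\ref{comesup} shows that $\cE$ is $h_t$ slope-destabilizing precisely when the affine function
\[
\phi_{\cE}(t):=q_X(\lambda_{\cE,\cF},h_t)=(1-t)\,q_X(\lambda_{\cE,\cF},h_0)+t\,q_X(\lambda_{\cE,\cF},h_1)
\]
satisfies $\phi_{\cE}(t)\ge 0$, and is $h_t$ slope-desemistabilizing precisely when $\phi_{\cE}(t)>0$. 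Thus $\cF$ is $h_t$ slope-stable iff $\phi_{\cE}(t)<0$ for all such $\cE$, and $h_t$ slope-semistable iff $\phi_{\cE}(t)\le 0$ for all such $\cE$.

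The one substantial input — and the step I expect to be the main obstacle — is a boundedness statement: the saturated subsheaves $\cE\subset\cF$ that slope-destabilize $\cF$ for \emph{some} $t\in[0,1]$ form a bounded family. Since the segment is compact, the $h_t$-slopes of such subsheaves are uniformly bounded below in terms of the fixed data, so Grothendieck's boundedness theorem (exactly as in the surface case, cf.\ the proof of Lemma~4.C.5 in~\cite{huylehnbook}) applies verbatim on the higher-dimensional variety $X$. Consequently only finitely many classes $\lambda_{\cE,\cF}\in H^{1,1}_{\ZZ}(X)$ arise, and hence only finitely many affine functions $\phi_{\cE}$ are relevant.

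What remains is a finite, elementary analysis. By $h_0$ slope-stability every relevant $\phi_{\cE}$ satisfies $\phi_{\cE}(0)<0$, while the failure of $h_1$ slope-stability produces a relevant $\cE$ with $\phi_{\cE}(1)\ge 0$. Set
\[
t^{*}:=\inf\{\,t\in[0,1]\mid \phi_{\cE}(t)\ge 0 \text{ for some relevant } \cE\,\}.
\]
Each set $\{t\mid\phi_{\cE}(t)\ge 0\}$ is a closed subinterval and there are finitely many of them, so the infimum is attained; moreover $t^{*}>0$, because finitely many affine functions that are strictly negative at $0$ remain negative on a neighbourhood of $0$. At $t=t^{*}$ one relevant class satisfies $\phi_{\cE}(t^{*})=0$, while by minimality $\phi_{\cE'}(t^{*})\le 0$ for every relevant $\cE'$. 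By Lemma~\ref{comesup} the first assertion gives $\mu_{h_{t^{*}}}(\cE)=\mu_{h_{t^{*}}}(\cF)$, so $\cF$ is not $h_{t^{*}}$ slope-stable, while the second shows $\cF$ has no $h_{t^{*}}$ slope-desemistabilizing subsheaf, so $\cF$ is $h_{t^{*}}$ slope-semistable; that is, $\cF$ is strictly $h_{t^{*}}$ slope-semistable. Finally $t^{*}$ is rational, being the root of the rational affine equation $\phi_{\cE}(t)=0$, namely $t^{*}=q_X(\lambda_{\cE,\cF},h_0)/\bigl(q_X(\lambda_{\cE,\cF},h_0)-q_X(\lambda_{\cE,\cF},h_1)\bigr)$ with $\lambda_{\cE,\cF}\in H^{1,1}_{\ZZ}(X)$ and $h_0,h_1$ integral. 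Hence $h:=h_{t^{*}}=(1-t^{*})h_0+t^{*}h_1$ lies in $\QQ_{+}h_0+\QQ_{+}h_1$ (in the boundary case $t^{*}=1$ the class $h_1$ itself is already strictly slope-semistable), completing the proof.
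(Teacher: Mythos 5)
Your proposal is correct in outline and follows exactly the route the paper intends: the paper offers no written proof of Proposition~\ref{polint} beyond the remark that, thanks to Lemma~\ref{comesup}, one replicates the surface argument of Lemma~4.C.5 in \cite{huylehnbook}, and your interpolation-plus-finitely-many-walls analysis is precisely that replica spelled out.

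The one soft spot is the step you yourself flag as the main obstacle, boundedness, where your justification does not quite parse. Grothendieck's boundedness lemma concerns a \emph{fixed} polarization, whereas your condition \lq\lq$\mu_{h_t}(\cE)\ge\mu_{h_t}(\cF)$ for some $t$\rq\rq\ lets the polarization vary with $\cE$; moreover, unlike the surface case, on a $2n$-dimensional $X$ the function $t\mapsto \mu_{h_t}(\cE)$ is \emph{not} affine in $t$ (the degree involves $h_t^{2n-1}$), so the surface argument does not apply \lq\lq verbatim\rq\rq\ --- this is exactly the point where Lemma~\ref{comesup} must be invoked once more. The repair is immediate from your own setup: each $\phi_{\cE}$ is affine, and $h_0$ slope-stability gives $\phi_{\cE}(0)<0$ for every saturated proper subsheaf; hence if $\phi_{\cE}(t)\ge 0$ for some $t\in[0,1]$ then necessarily $\phi_{\cE}(1)\ge 0$. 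In other words, every subsheaf that destabilizes somewhere on the segment already $h_1$-destabilizes, so the relevant family is contained in the family of saturated subsheaves with $\mu_{h_1}(\cE)\ge\mu_{h_1}(\cF)$, and a single application of Grothendieck's lemma for the fixed polarization $h_1$ yields boundedness, hence finiteness of the classes $\lambda_{\cE,\cF}$. With that substitution the rest of your argument (the attained infimum, $t^{*}>0$, strict semistability at $t^{*}$, rationality of $t^{*}$) goes through. Finally, your boundary case $t^{*}=1$ is a defect of the statement rather than of your proof: if $\cF$ is $h_t$ slope-stable for all $t<1$ and strictly $h_1$ slope-semistable, then no class in the open cone $\QQ_{+}h_0+\QQ_{+}h_1$ can witness the conclusion, so producing $h_1$ itself is the best possible outcome; this is harmless for the paper, since in the proof of Proposition~\ref{campol} the only property used is that the resulting $h$ lies in the same open $a(\cF)$-chamber as $h_0$ and $h_1$, which holds for $h_1$ by convexity.
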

The result below, motivated by Proposition~\ref{polint}, is valid for \emph{modular}  torsion free sheaves. In fact this is the only instance in which the modularity hypothesis is needed for the proof of Proposition~\ref{campol}.
\begin{prp}\label{propsemi}
Let $(X,h)$ be a polarized HK variety of dimension $2n$.   
Let $\cF$ be  a    torsion free modular \emph{strictly}  $h$ slope-semistable  sheaf on $X$,  and let 
\begin{equation}\label{faria}
0\lra \cE\lra \cF\lra \cG\lra 0
\end{equation}
be an   exact sequence of non zero torsion free sheaves which is $h$ slope destabilizing, i.e.~$\mu_h(\cE)=\mu_h(\cF)$. Then
\begin{equation}\label{doppelgang}
-a(\cF) \le q_X(\lambda_{\cE,\cF})\le 0.
\end{equation}
Moreover $q_X(\lambda_{\cE,\cF})= 0$ only if $\lambda_{\cE,\cF}=0$.  
\end{prp}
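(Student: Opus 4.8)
The plan is to transport the classical surface proof of this kind of inequality (as in Lemma~4.C.5 of~\cite{huylehnbook}) to the HK setting, using modularity together with the Fujiki relation to replace intersection numbers by values of $q_X$. Write $r:=r(\cF)$, $r_1:=r(\cE)$, $r_2:=r(\cG)$, so that $r=r_1+r_2$, and set $\lambda:=\lambda_{\cE,\cF}$. The starting point is the elementary identity of classes in $H^4(X;\QQ)$
\[
\frac{\Delta(\cE)}{r_1}+\frac{\Delta(\cG)}{r_2}-\frac{\Delta(\cF)}{r}=\frac{\lambda^2}{r_1r_2r},
\]
which follows by a direct computation from the additivity of $\ch$ along~\eqref{faria} and the definition~\eqref{dueversioni} of $\Delta$; here one uses that $\lambda=r(\cG)c_1(\cE)-r(\cE)c_1(\cG)$, so that $\lambda^2=(r_2c_1(\cE)-r_1c_1(\cG))^2$.

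Next I would integrate this identity against $h^{2n-2}$ and evaluate the three resulting numbers. For the $\Delta(\cF)$ term, modularity~\eqref{fernand} gives $\int_X\Delta(\cF)\smile h^{2n-2}=d(\cF)\,(2n-3)!!\,q_X(h)^{n-1}$. For the $\lambda^2$ term, I would first note that $\mu_h(\cE)=\mu_h(\cF)$ forces $q_X(\lambda,h)=0$ by Lemma~\ref{comesup}(b); polarizing the Fujiki relation~\eqref{relfuj} (equivalently, using~\eqref{polrel}) and discarding every addend that carries the vanishing factor $q_X(\lambda,h)$ then yields $\int_X\lambda^2\smile h^{2n-2}=c_X\,(2n-3)!!\,q_X(\lambda)\,q_X(h)^{n-1}$. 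Writing $Q:=(2n-3)!!\,q_X(h)^{n-1}>0$, the integrated identity becomes
\[
\frac{c_X}{r_1r_2r}\,q_X(\lambda)\,Q=\frac{1}{r_1}\int_X\Delta(\cE)\smile h^{2n-2}+\frac{1}{r_2}\int_X\Delta(\cG)\smile h^{2n-2}-\frac{1}{r}\,d(\cF)\,Q.
\]

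The lower bound then comes from positivity. Because $\cF$ is $h$ slope-semistable and $\mu_h(\cE)=\mu_h(\cF)$ (hence also $\mu_h(\cG)=\mu_h(\cF)$, by additivity of rank and degree), both $\cE$ and $\cG$ are $h$ slope-semistable: any subsheaf of $\cE$ is a subsheaf of $\cF$ and so has slope $\le\mu_h(\cF)=\mu_h(\cE)$, while any torsion-free quotient of $\cG$ is a quotient of $\cF$ and so has slope $\ge\mu_h(\cF)=\mu_h(\cG)$. Thus the Bogomolov--Gieseker inequality in higher dimension (obtained by restricting an $h$ slope-semistable torsion-free sheaf to a general complete intersection surface cut out by a multiple of $h$, where the classical surface inequality applies) gives $\int_X\Delta(\cE)\smile h^{2n-2}\ge0$ and $\int_X\Delta(\cG)\smile h^{2n-2}\ge0$; applied to $\cF$ itself it also gives $d(\cF)\ge0$. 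Dropping the two nonnegative $\Delta(\cE),\Delta(\cG)$ terms yields $q_X(\lambda)\ge-\tfrac{r_1r_2}{c_X}d(\cF)$, and since $r_1r_2\le(r_1+r_2)^2/4=r^2/4$ and $d(\cF)\ge0$ this gives $q_X(\lambda)\ge-\tfrac{r^2}{4c_X}d(\cF)=-a(\cF)$ by~\eqref{esmeralda}.

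Finally, the upper bound and the rigidity clause are pure Hodge theory and do not use the identity above. Since $\lambda\in H^{1,1}_{\ZZ}(X)$ and $q_X(\lambda,h)=0$ for the ample (hence $q_X$-positive) class $h$, and since $q_X$ restricted to $H^{1,1}_{\RR}(X)$ has signature $(1,h^{1,1}-1)$ with its unique positive direction spanned by $h$, the class $\lambda$ lies in the negative-definite orthogonal complement $h^{\bot}\cap H^{1,1}_{\RR}(X)$; hence $q_X(\lambda)\le0$, with equality if and only if $\lambda=0$. I expect the only genuine external input to be the higher-dimensional Bogomolov--Gieseker inequality, together with the (short) verification that $\cE$ and $\cG$ inherit slope-semistability; everything else is the formal Fujiki/modularity bookkeeping that makes the surface argument go through essentially verbatim, with $q_X$ playing the r\^ole of the intersection form.
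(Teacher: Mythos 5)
Your proof is correct and is essentially the paper's own argument: the same discriminant identity (you normalize by $r_1r_2r$, the paper clears denominators as in~\eqref{lungaeq}), the same integration against $h^{2n-2}$ using modularity for the $\Delta(\cF)$ term and the polarized Fujiki relation plus $q_X(\lambda_{\cE,\cF},h)=0$ for the $\lambda^2$ term, the same Bogomolov inequality for the slope-semistable sheaves $\cE$ and $\cG$, and the same signature argument for the upper bound and the rigidity clause. The one small refinement on your side is deriving $d(\cF)\ge 0$ explicitly by applying Bogomolov to $\cF$ itself, which handles the case $q_X(\lambda_{\cE,\cF})=0$ uniformly, whereas the paper reduces to the case $q_X(\lambda_{\cE,\cF})<0$ where positivity of $d(\cF)$ comes out of the final inequality.
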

\begin{proof}
Since the exact sequence in~\eqref{faria} is destabilizing,  
$q_X(\lambda_{\cE,\cF}, h)=0$ by Lemma~\ref{comesup}. Since the BBF form on $\NS(X)$ has signature $(1,\rho(X)-1)$, it follows that 
$q_X(\lambda_{\cE,\cF})\le 0$ with equality only if $\lambda_{\cE,\cF}=0$.  (Recall that $q_X(h)> 0$, because $h$ is ample.)

We are left with proving the second inequality in~\eqref{doppelgang}. Hence we assume that $ q_X(\lambda_{\cE,\cF})< 0$. 
By additivity of the Chern character and by~\eqref{dueversioni}, we have
\begin{equation}\label{lungaeq}
r(\cF) \cdot r(\cG) \Delta(\cE)+ r(\cF) \cdot r(\cE) \Delta(\cG)=r(\cE)\cdot r(\cG)\Delta(\cF)+\lambda_{\cE,\cF}^2.
\end{equation}
 Cupping both sides of the equality in~\eqref{lungaeq} by $h^{2n-2}$, and integrating, we get (here we use the hypothesis that  
$\cF$ is modular)
\begin{multline}\label{pharaon}
\int_X r(\cF) \cdot r(\cG) \Delta(\cE)\smile h^{2n-2}+\int_X  r(\cF) \cdot r(\cE) \Delta(\cG))\smile h^{2n-2}=\\
=r(\cE)\cdot r(\cG)\cdot d(\cF)\cdot(2n-3)!! q_X(h)^{n-1} +c_X\cdot q_X(\lambda_{\cE,\cF})\cdot (2n-3)!! q_X(h)^{n-1}. \\
\end{multline}
By hypothesis $\mu_h(\cE)=\mu_h(\cF)=\mu_h(\cG)$. Since  $\cF$ is $h$ slope-semistable it follows that $\cE$ and $\cG$ are $h$ slope-semistable torsion free sheaves. Thus 
$$\int_X \Delta(\cE)\smile h^{2n-2}\ge0 ,\quad \int_X \Delta(\cG)\smile h^{2n-2}\ge0 $$
 by Bogomolov's inequality, and hence~\eqref{pharaon} gives
\begin{equation}\label{carconte}
-r(\cE)\cdot r(\cG)\cdot d(\cF) \le c_X\cdot q_X(\lambda_{\cE,\cF}).
\end{equation}
Dividing by $c_X$ (which is strictly positive), we see that the second inequality in~\eqref{doppelgang} follows from~\eqref{carconte} and the inequality $r(\cE)\cdot r(\cG)\le r(\cF)^2/4$. 
\end{proof}
We are ready to prove Proposition~\ref{campol}. Item~(1) follows  from Proposition~\ref{propsemi}. In order to prove Item~(2) it suffices to show that if $\cF$ is $h_0$ slope-stable, then it is $h_1$ slope-stable. Suppose that $\cF$ is not $h_1$ slope-stable. By Proposition~\ref{polint}, there exists $h\in(\QQ_{+}h_0+\QQ_{+}h_1)$ such that $\cF$ is strictly $ h$ slope-semistable. Hence there exists an $h$ destabilizing
\begin{equation*}
0\lra \cE\lra \cF\lra \cG\lra 0
\end{equation*}
   exact sequence of non zero torsion free  sheaves.  Since $h_0,h_1$ belong to the same open $a(\cF)$ chamber, also $h$ belongs to the same open $a(\cF)$-chamber, see Remark~\ref{convesso}. Thus, by Proposition~\ref{propsemi},  we get that $\lambda_{\cE,\cF}=0$. It follows that $\cF$ is not $h_0$ slope-stable, and that is a contradition.
\qed
\subsection{Stability of modular sheaves on a Lagrangian HK}\label{subsec:pazzia}
\setcounter{equation}{0}
Let $X$ be a HK manifold with  a surjection $\pi\colon X\to Y$ with connected fibers, where  $Y$ is a K\"ahler manifold such that $0<\dim Y<\dim X$. Then by Matsushita~\cite{mat-fiber-struct,mat-addendum,mat-equidim} and Hwang~\cite{hwang-base-lagr-fibr} the following hold:  $Y$ is  isomorphic to a projective space,  a generic fiber of $\pi$ is an abelian variety, and all fibers of 
$\pi$ are Lagrangian subspaces of $X$ (in particular  $2\dim Y=\dim X$ and the dimension of each fiber  of $\pi$ is half the dimension of $X$). 

For our purposes a Lagrangian fibration on a HK manifold of dimension $2n$ is    a surjection $\pi\colon X\to \PP^n$ with connected fibers. A Lagrangian fibration on a $K3$ surface is nothing else but an elliptic fibration.  Lagrangian fibrations on HK's of higher dimension behave very much like elliptic fibrations on $K3$ surfaces.
\begin{expl}
Let $S$ be a $K3$ surface with an elliptic fibration $\rho\colon S\to\PP^1$. The composition  $S^{[n]}\to S^{(n)}\to  (\PP^1)^{(n)}\cong\PP^n$ is a Lagrangian fibration 
$\pi\colon S^{[n]}\to \PP^n$. A generic fiber of $\pi$ is isomorphic to $C_1\times\ldots\times C_n$, where $C_1,\ldots, C_n$ are generic distinct fibers of  $\rho$. The generic deformation of the couple $(S^{[n]},\pi)$ is \emph{not} obtained by deforming $(S,\rho)$. In fact by~\cite{mat-iso} the deformation space of  $(S^{[n]},\pi)$ is smooth and it has dimension one greater than the deformation space of  $(S,\rho)$.
\end{expl}
\begin{rmk}\label{eccoteta}
Let $\pi\colon X\to \PP^n$ be a Lagrangian fibration on a HK manifold. For $t\in\PP^n$ we let $X_t:=\pi^{-1}(t)$ be the schematic fiber over $t$. If $X_t$ is smooth the image of the restriction map $H^2(X;\ZZ)\to H^2(X_t;\ZZ)$ has rank one, and is generated by  an ample 
class $\theta_t\in H^{1,1}_{\ZZ}(X_t)$, see~\cite{wieneck1}.  If $\cF$ is a sheaf on $X_t$ slope-(semi)stability of $\cF$ will always mean $\theta_t$ slope-(semi)stability.
\end{rmk}
If  $\pi\colon X\to\PP^n$ is  a Lagrangian fibration we let 
 \begin{equation}\label{eccoeffe}
f:=c_1(\pi^{*}\cO_{\PP^n}(1))\in H^{1,1}_{\ZZ}(X).
\end{equation}
 Since $f$ is nef, it belongs to the closure of $\Amp(X)_{\RR}$. As is well-known $q_X(f)=0$.

In order to establish a relation between slope-(semi)stability of a sheaf on a  Lagrangian fibration and slope-(semi)stability of its restriction to a generic Lagrangian fiber we need a definition which extends a  notion  which is very useful when analyzing vector bundle on fibered surfaces, see for example Definition~2.1
 in~\cite{friedman-rk2-ell-surfcs}. 
\begin{dfn}\label{suipol}
Let $X$ be a HK variety equipped with a Lagrangian fibration $\pi\colon X\to\PP^n$. Let $a$ be positive integer. An ample divisor class $h$ on $X$ is 
$a$-\emph{suitable}  if the following holds. Let $\lambda\in H^{1,1}_{\ZZ}(X)$ be a class such that $-a\le q_X(\lambda)< 0$: then either $q_X(\lambda,h)$ and $q_X(\lambda,f)$ have the same sign, or they are both zero. 
\end{dfn}
Notice that the notion of $a$-suitable depends on the chosen Lagrangian fibration. 
\begin{rmk}\label{opportuno}
Let $X$ be a HK variety  with a Lagrangian fibration $\pi\colon X\to\PP^n$, and suppose that $X$ has Picard number $2$, i.e.~$h^{1,1}_{\ZZ}(X)=2$. Given $a>0$ there exists a finite set of $a$-walls because the restriction of $q_X$ to the rank-$2$ lattice $H^{1,1}_{\ZZ}(X)$ represents $0$. Hence there exists one and only one open $a$-chamber $\cC$ such that its closure contains $f$ (recall that $f$ is nef).  A polarization is $a$-suitable if and only if it belongs to $\cC$.
\end{rmk}
Below is the result relating slope-(semi)stability of a sheaf on a  Lagrangian fibration and slope-(semi)stability of its restriction to a generic Lagrangian fiber.
\begin{prp}\label{lagstab}
Let $\pi\colon X\to\PP^n$ be a  Lagrangian fibration of a HK variety  
of dimension $2n$. Let $\cF$ be  a torsion free modular sheaf   on $X$ such that $\sing \cF$ does \emph{not} dominate $\PP^n$. Let $h$ be an ample divisor class on $X$ which is $a(\cF)$-suitable.
   Then the following hold:
\begin{enumerate}
\item[(i)]
If the restriction of $\cF$ to a generic fiber of $\pi$   is  slope-stable,  then $\cF$ is $h$ slope-stable. 
\item[(ii)]
If $\cF$ is $h$ slope-stable then the restriction of $\cF$ to the generic fiber of $\pi$   is  slope-semistable.
\end{enumerate}
\end{prp}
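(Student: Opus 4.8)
The plan is to reduce slope-(semi)stability on $X$ to slope-(semi)stability on a generic Lagrangian fiber by exploiting the $a(\cF)$-suitability of $h$ together with the wall-and-chamber machinery developed in Subsection~\ref{murocella}. The guiding principle is that restricting to a generic fiber corresponds, at the level of divisor classes, to intersecting with $f$, while $h$-slope-stability corresponds to intersecting with $h$; suitability is precisely the hypothesis that forces these two to detect destabilizing sub-objects in the same way.

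I would begin with part~(ii), which is the easier of the two. Assume $\cF$ is $h$ slope-stable, and suppose for contradiction that the restriction $\cF\vert_{X_t}$ to a generic fiber is not slope-semistable. A destabilizing subsheaf on the generic fiber spreads out, over a dense open $U\subset\PP^n$, to a subsheaf $\cE'\subset\cF\vert_{\pi^{-1}(U)}$ whose restriction to each fiber is destabilizing; one then takes the saturation $\cE\subset\cF$ of the extension of $\cE'$ across $X$ (using that $\sing\cF$ does not dominate $\PP^n$, so generic fibers avoid the singular locus and restriction of $\cF$ to them is well behaved). The class $\lambda_{\cE,\cF}$ then satisfies $q_X(\lambda_{\cE,\cF},f)>0$, because on the fiber the subsheaf strictly raises the slope and the restriction of $q_X(\,\cdot\,,f)$ computes fiberwise slope by Remark~\ref{eccoteta} and Lemma~\ref{comesup}. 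Since $\cF$ is modular and the spreading-out sub-object is fiberwise destabilizing, the bound from Proposition~\ref{propsemi} gives $-a(\cF)\le q_X(\lambda_{\cE,\cF})<0$ (or $\lambda_{\cE,\cF}=0$), so $\lambda_{\cE,\cF}$ lies in the range where $a(\cF)$-suitability applies; suitability then forces $q_X(\lambda_{\cE,\cF},h)>0$ as well, i.e.\ $\mu_h(\cE)>\mu_h(\cF)$ by Lemma~\ref{comesup}, contradicting $h$ slope-stability of $\cF$.

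For part~(i), I would argue contrapositively: assume $\cF$ is not $h$ slope-stable and produce a fiberwise destabilizing sub-object. A destabilizing $\cE\subset\cF$ gives $q_X(\lambda_{\cE,\cF},h)\ge 0$. The obstacle is that $q_X(\lambda_{\cE,\cF})$ need not a priori lie in $[-a(\cF),0)$ for an arbitrary destabilizing $\cE$, so suitability cannot be invoked directly. The remedy is to choose $\cE$ to be a \emph{minimal} destabilizing subsheaf (a maximal-slope subsheaf of minimal rank, e.g.\ coming from the Harder-Narasimhan or a Jordan-H\"older filtration): then $\cE$ and $\cF/\cE$ are slope-semistable, Bogomolov's inequality applies to both, and the identity~\eqref{lungaeq} together with modularity yields exactly the inequality $q_X(\lambda_{\cE,\cF})\ge -a(\cF)$ as in the proof of Proposition~\ref{propsemi}. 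With $\lambda_{\cE,\cF}$ now in the suitable range and $q_X(\lambda_{\cE,\cF},h)\ge 0$, suitability gives $q_X(\lambda_{\cE,\cF},f)\ge 0$, hence $\mu_f(\cE)\ge\mu_f(\cF)$, which is fiberwise slope; restricting $\cE\subset\cF$ to a generic fiber (again using that $\sing\cF$ does not dominate $\PP^n$) then exhibits a subsheaf of slope $\ge\mu(\cF\vert_{X_t})$, contradicting slope-stability on the generic fiber.

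The main obstacle, as indicated, is the control of $q_X(\lambda_{\cE,\cF})$: one must ensure the destabilizing class lands in the window $[-a(\cF),0)$ on which suitability is defined, and this is exactly where modularity of $\cF$ and Bogomolov's inequality are essential, just as in Proposition~\ref{propsemi}. A secondary technical point is the spreading-out/saturation argument connecting a sub-object on the generic fiber to a genuine subsheaf on $X$, and back; here the hypothesis that $\sing\cF$ does not dominate $\PP^n$ guarantees that the generic fiber is disjoint from $\sing\cF$, so that restriction commutes with the relevant rank and $c_1$ computations and the class $\lambda_{\cE,\cF}$ restricts to the corresponding class on the fiber.
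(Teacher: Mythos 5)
Your overall frame---restriction to a generic fiber detects the sign of $q_X(\lambda_{\cE,\cF},f)$ (Lemma~\ref{contreffe}), $h$ slope-stability detects the sign of $q_X(\lambda_{\cE,\cF},h)$ (Lemma~\ref{comesup}), and $a(\cF)$-suitability plays the two signs off each other---is the right one, and the spreading-out/saturation step in part~(ii) is fine. But there is a genuine gap, in both parts, at the crucial point: the claim that $\lambda_{\cE,\cF}$ lands in the window $-a(\cF)\le q_X(\lambda_{\cE,\cF})<0$ where suitability says anything at all. In part~(ii) you invoke ``the bound from Proposition~\ref{propsemi}'' for a subsheaf that is fiberwise desemistabilizing; but the hypotheses of Proposition~\ref{propsemi} are that $\cF$ is \emph{strictly} $h$ slope-semistable and that $\mu_h(\cE)=\mu_h(\cF)$, whereas in your situation $\cF$ is $h$ slope-stable, so $\mu_h(\cE)<\mu_h(\cF)$. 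The proof of Proposition~\ref{propsemi} uses $q_X(\lambda,h)=0$ twice in an essential way: once with the signature of the BBF form to get $q_X(\lambda)\le 0$, and once because the expansion~\eqref{polrel} of $\int_X\lambda^2\smile h^{2n-2}$ contains, besides $q_X(\lambda)q_X(h)^{n-1}$, a cross term which is a positive multiple of $q_X(\lambda,h)^2q_X(h)^{n-2}$ when $2n\ge 4$; so when $q_X(\lambda,h)\neq 0$ Bogomolov only yields $q_X(\lambda)\ge -a(\cF)-C\,q_X(\lambda,h)^2/q_X(h)$ with $C>0$, not $q_X(\lambda)\ge -a(\cF)$. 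Moreover Bogomolov is applied there to \emph{both} $\cE$ and $\cG=\cF/\cE$, whose $h$-semistability is deduced precisely from $\mu_h(\cE)=\mu_h(\cF)=\mu_h(\cG)$ together with semistability of $\cF$. In your part~(i) the ``minimal destabilizing subsheaf'' makes $\cE$ semistable but not $\cF/\cE$ (the Harder--Narasimhan filtration may have more than two steps), so your argument really only covers the case where $\cF$ is strictly $h$ slope-semistable---which \emph{is} Proposition~\ref{propsemi}---and breaks down exactly in the remaining case where $\cF$ is $h$-unstable; you also never establish the upper bound $q_X(\lambda)<0$ required for the window to be relevant.

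The missing idea---and the route of the actual proof (the paper defers it to~\cite{ogmodvb}, flagging Lemma~\ref{contreffe} as the key observation and the fibered-surface argument as the model)---is to manufacture a polarization at which $\cF$ is genuinely strictly slope-semistable and apply Proposition~\ref{propsemi} \emph{there}. Concretely: using Lemma~\ref{contreffe}, boundedness of the slopes of subsheaves of $\cF$, and the fact that the nonzero values of $q_X(\cdot,f)$ on integral classes are bounded away from $0$, one shows that for $s\gg 0$ the sheaf $\cF$ is $(h+sf)$ slope-stable in case~(i) (resp.\ not $(h+sf)$ slope-stable in case~(ii), since the spread-out fiberwise destabilizer has $q_X(\lambda,f)>0$). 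Then $\cF$ is slope-stable for one of $h$, $h+sf$ and not for the other, so Proposition~\ref{polint} produces $h'\in\QQ_{+}h+\QQ_{+}(h+sf)$ at which $\cF$ is strictly $h'$ slope-semistable. Proposition~\ref{propsemi} now applies legitimately at $h'$ and places the class $\lambda$ of \emph{that} destabilizing sequence (not the subsheaf you started with) in the window $-a(\cF)\le q_X(\lambda)<0$, unless $\lambda=0$, a case excluded by restricting to a generic fiber. Since the window condition depends only on $\lambda$ and not on the polarization, $a(\cF)$-suitability of $h$ applies; and since $q_X(\lambda,h')=0$ with $h'$ a positive combination of $h$ and $f$, the numbers $q_X(\lambda,h)$ and $q_X(\lambda,f)$ are either both zero or of opposite signs, which contradicts suitability once Lemma~\ref{contreffe} pins down the sign of $q_X(\lambda,f)$. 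Your proposal never uses Proposition~\ref{polint}, and without this intermediate-polarization step the argument does not close.
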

The observation that allows us to prove Proposition~\ref{lagstab} is the following. 
\begin{lmm}\label{contreffe}
Let $X$ be a HK variety  of dimension $2n$ 
equipped with a Lagrangian fibration $\pi\colon X\to\PP^n$, and let $f:=c_1(\pi^{*}\cO_{\PP^n}(1))$. 
Let $\cF$ be a torsion free sheaf on $X$, and let $\cE\subset\cF$ be a  subsheaf with $0<r(\cE)<r(\cF)$. Then the following hold:
\begin{enumerate}
\item[(a)]
If, for generic $t\in\PP^n$, the restriction $\cF_t:=\cF_{|_{X_t}}$ is slope-stable, then
\begin{equation}\label{intneg}
q_X(\lambda_{\cE,\cF},f)<0.
\end{equation}
\item[(b)]
If, for generic $t\in\PP^n$, the subsheaf $\cE_t:=\cE_{|_{X_t}}\subset\cF_t$ is  slope  desemistabilizing, then
\begin{equation}\label{intpos}
q_X(\lambda_{\cE,\cF},f)> 0.
\end{equation}
\end{enumerate}
\end{lmm}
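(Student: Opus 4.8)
The plan is to reduce both items to a single principle: the sign of $q_X(\lambda_{\cE,\cF},f)$ is detected by the degree, on a generic fibre $X_t$, of the divisor class $\lambda_{\cE,\cF}$. Fix once and for all an ample class $h$ on $X$. The starting point is the observation that $f^n$ is the class of a fibre: by \eqref{eccoeffe} one has $f^n=\pi^*\bigl(c_1(\cO_{\PP^n}(1))^n\bigr)=\pi^*[\mathrm{pt}]=[X_t]$. Hence for any $\lambda\in H^{1,1}_{\ZZ}(X)$ the projection formula gives
\begin{equation*}
\int_X \lambda\smile h^{n-1}\smile f^n=\int_{X_t}\lambda|_{X_t}\smile\bigl(h|_{X_t}\bigr)^{n-1}.
\end{equation*}
First I would prove that the left-hand side is a positive constant times $q_X(\lambda,f)$.

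To evaluate the left-hand side I would apply the polarised Fujiki relation \eqref{polrel} to the $2n$ factors $\lambda,h,\dots,h,f,\dots,f$, where $h$ occurs $n-1$ times and $f$ occurs $n$ times. The relation writes the integral as $c_X$ times the sum over pairings of these factors of products of $q_X(\cdot,\cdot)$. Since $q_X(f)=0$, every pairing that matches two copies of $f$ contributes zero; as there are exactly $n$ copies of $f$ and exactly $n$ remaining factors (namely $\lambda$ and the $n-1$ copies of $h$), a surviving pairing must match each $f$ with a distinct non-$f$ factor, so each survivor contributes precisely $q_X(\lambda,f)\cdot q_X(h,f)^{n-1}$. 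Counting the $n!$ such pairings yields
\begin{equation*}
\int_X \lambda\smile h^{n-1}\smile f^n=C\cdot q_X(\lambda,f),\qquad C=c_X\cdot n!\cdot q_X(h,f)^{n-1}.
\end{equation*}
Here $C>0$ because $c_X>0$ and $q_X(h,f)>0$, the latter following from $\int_X h^{2n-1}\smile f>0$ (an ample power meeting a non-zero nef class) together with \eqref{polrel}. Finally $h|_{X_t}$ is a positive multiple of the ample generator $\theta_t$ of Remark~\ref{eccoteta}, so, combining with the projection formula, the fibre degree $\int_{X_t}\lambda|_{X_t}\smile\theta_t^{n-1}$ has the same sign as $q_X(\lambda,f)$ for every $\lambda$.

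It remains to take $\lambda=\lambda_{\cE,\cF}$ and read off the fibre degree in terms of slopes. For generic $t$ the fibre $X_t$ is a smooth abelian variety, $\cF_t$ and $\cE_t$ are torsion free, the inclusion $\cE_t\hookrightarrow\cF_t$ stays injective, ranks are preserved, and $c_1(\cE_t)=c_1(\cE)|_{X_t}$, $c_1(\cF_t)=c_1(\cF)|_{X_t}$; thus $\lambda_{\cE,\cF}|_{X_t}=\lambda_{\cE_t,\cF_t}$ and
\begin{equation*}
\int_{X_t}\lambda_{\cE_t,\cF_t}\smile\theta_t^{n-1}=r(\cE_t)\,r(\cF_t)\bigl(\mu_{\theta_t}(\cE_t)-\mu_{\theta_t}(\cF_t)\bigr).
\end{equation*}
As $r(\cE_t),r(\cF_t)>0$, this degree has the same sign as $\mu_{\theta_t}(\cE_t)-\mu_{\theta_t}(\cF_t)$. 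In case (a), slope-stability of $\cF_t$ applied to the proper subsheaf $\cE_t$ (with $0<r(\cE_t)<r(\cF_t)$) forces $\mu_{\theta_t}(\cE_t)<\mu_{\theta_t}(\cF_t)$, whence $q_X(\lambda_{\cE,\cF},f)<0$; in case (b) the desemistabilising hypothesis reads $\mu_{\theta_t}(\cE_t)>\mu_{\theta_t}(\cF_t)$, whence $q_X(\lambda_{\cE,\cF},f)>0$. I expect the main difficulty to lie in the bookkeeping of the Fujiki expansion --- isolating exactly the pairings killed by $q_X(f)=0$ and checking that the survivors all carry one sign --- rather than in the genericity statements, which are the standard facts that restriction to a general fibre commutes with ranks, Chern classes and the formation of $\cE_t\subset\cF_t$.
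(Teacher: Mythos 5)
Your proof is correct and follows essentially the same route as the paper's: both reduce the statement to the identity $\int_{X_t}\lambda_{\cE_t,\cF_t}\smile h_t^{n-1}=\int_X\lambda_{\cE,\cF}\smile h^{n-1}\smile f^n=n!\,c_X\,q_X(h,f)^{n-1}q_X(\lambda_{\cE,\cF},f)$, obtained from $f^n=[X_t]$ and the polarized Fujiki relation \eqref{polrel} together with $q_X(f)=0$, and then conclude by the sign of the slope difference on the fiber. Your write-up merely makes explicit the combinatorial count of surviving matchings, the positivity of $q_X(h,f)$, and the genericity facts that the paper leaves implicit.
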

\begin{proof}
Let $h_t:=h_{|X_t}$. We have
\begin{equation*}
\int_{X_t} \lambda_{\cE_t,\cF_t}\smile h_t^{n-1}=\int_X \lambda_{\cE,\cF}\smile h^{n-1}\smile f^n 
=n! c_X \cdot q_X(h,f)^{n-1} \cdot q_X(\lambda_{\cE,\cF},f).
\end{equation*}
In fact the first equality holds because  $f^n$ is the Poincar\`e dual of any fiber of the Lagrangian fibration, and the second equality holds by~\eqref{polrel} and  
 $q_X(f)=0$. Items~(a) and~(b) follow because $c_X$ and $q_X(h,f)$ are strictly positive.
\end{proof}
For the proof of Proposition~\ref{lagstab} see~\cite{ogmodvb}. The proof is similar to the proof of the analogous   result valid for fibered surfaces, see for example~\cite{friedman-rk2-ell-surfcs,ogwt2,yoshioka-vbs-on-ell-surfcs}.
\section{The Mukai lattice and sheaves on $K3$ surfaces}\label{secmuklatt}
\subsection{The Mukai lattice}
\setcounter{equation}{0}
Let $S$ be a $K3$ surface. The \emph{Mukai lattice} of $S$ is the full integral cohomology group $H(S;\ZZ)$ equipped with
the   \emph{Mukai pairing} 
$$\la (r,\ell,s)\,,(r',\ell',s')\ra:=\int_S(\ell\cup\ell'-rs'-r' s),$$
where $r,r'\in H^0(S;\ZZ)$, $\ell,\ell'\in H^2(S;\ZZ)$ and $s,s'\in H^4(S;\ZZ)$.
 Notice that the Mukai pairing is an even bilinear symmetric form. 
Moreover it has  the following key property: if  $\cF,\cE$ 
are sheaves on $S$ then
\begin{equation}\label{mukpair}
\la v(\cF),v(\cE)\ra=-\chi(\cF,\cE):=-\sum_{i=0}^2(-1)^i\dim\Ext^i(\cF,\cE).
\end{equation}
One gives a weight $2$ integral Hodge structure (HS) on $H(S;\CC)$ as follows: it is the direct sum of the standard HS on $H^2(S)$, and the pure HS of type $(1,1)$ on each of $H^0(S)$ and $H^4(S)$.  
\subsection{Dimension and smoothness of moduli spaces of of sheaves on $K3$'s}
\setcounter{equation}{0}
Let $S$ be a $K3$ surface and  $\cF$ be a sheaf on $S$.  The \emph{Mukai vector of $\cF$}  is 
\begin{equation}
v(\cF):=\ch(\cF)\td(S)^{1/2}=(r(\cF),c_1(\cF),\ch_2(\cF)+r(\cF)).
\end{equation}
Our notation for moduli spaces of sheaves on $S$ is the following. Let 
\begin{equation}\label{eccovi}
\vv=(r,\ell,s)\in H(S;\ZZ),\quad r>0,\ \ \ell\in \NS(S).
\end{equation}
 We let $\cM_{\vv}(S,h)$ be the moduli space of GM semistable torsion-free sheaves on $S$ with Mukai vector $\vv$. 
If $\cF$ is a GM semistable torsion-free sheaf on $S$ with $v(\cF)=\vv$ we let $[\cF]\in \cM_{\vv}(S,h)$ be the point representing the $S$-equivalence class of $\cF$.

Suppose that $[\cF]\in \cM_{\vv}(S,h)$ represents a stable sheaf.  Then the germ of $\cM_{\xi}(S,h)$  at $[\cF]$ is identified with the deformation space of  $\cF$. Since $\cF$ is simple we have
$\dim\Ext^0(\cF,\cF)=1$, and by Serre duality it follows that $\dim\Ext^2(\cF,\cF)=1$. By Artamkin-Mukai~\cite{artamkin,muksympl} it follows that
 $\cM_{\vv}(S,h)$  is smooth at $[\cF]$ with tangent space isomorphic to $\Ext^1(\cF,\cF)$. 
The dimension of the latter space is equal (see~\eqref{mukpair}) to
\begin{equation}\label{extuno}
\dim\Ext^1(\cF,\cF)=2+\la v(\cF),v(\cF)\ra=\Delta(\cF)-2(r^2-1).
\end{equation}
Summarizing we have the following result.
\begin{thm}[Artamkin-Mukai~\cite{artamkin,muksympl}]\label{dimemme}
Let $\cF$ be a GM \emph{stable} torsion-free sheaf on $S$ with $v(\cF)=\vv$. Then  $\cM_{\vv}(S,h)$ is smooth of dimension 
$\vv^2+2$ at $[\cF]$. In particular $\vv^2\ge -2$, i.e.~$\Delta(\cF)\ge 2(r^2-1)$.
\end{thm}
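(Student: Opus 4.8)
The plan is to exploit the local description of the moduli space at a stable point together with the Mukai pairing, so that the statement reduces to the unobstructedness theorem of Artamkin and Mukai quoted in the introduction. First I would recall that, since $[\cF]$ represents a stable sheaf, the germ of $\cM_{\vv}(S,h)$ at $[\cF]$ is identified with the deformation space of $\cF$; hence smoothness and local dimension can be read off from the groups $\Ext^i(\cF,\cF)$.

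Next I would pin down these groups. Stability forces $\cF$ to be simple, so $\Ext^0(\cF,\cF)=\Hom(\cF,\cF)=\CC$ and $\dim\Ext^0(\cF,\cF)=1$. Because $S$ is a $K3$ surface, $K_S$ is trivial, and Serre duality gives $\Ext^2(\cF,\cF)\cong\Ext^0(\cF,\cF)^{\vee}$, whence $\dim\Ext^2(\cF,\cF)=1$. In particular the identity endomorphism shows that the trace map $\Ext^2(\cF,\cF)\to H^2(S,\cO_S)$ is a nonzero map between one-dimensional spaces, hence an isomorphism. By the Artamkin--Mukai theorem this is precisely the condition guaranteeing that $\cF$ is unobstructed; therefore $\cM_{\vv}(S,h)$ is smooth at $[\cF]$ with Zariski tangent space $\Ext^1(\cF,\cF)$, and the local dimension equals $\dim\Ext^1(\cF,\cF)$.

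The dimension count is then a formal manipulation. By the defining property~\eqref{mukpair} of the Mukai pairing,
$$\la v(\cF),v(\cF)\ra=-\chi(\cF,\cF)=-\dim\Ext^0(\cF,\cF)+\dim\Ext^1(\cF,\cF)-\dim\Ext^2(\cF,\cF).$$
Substituting $\dim\Ext^0(\cF,\cF)=\dim\Ext^2(\cF,\cF)=1$ yields $\dim\Ext^1(\cF,\cF)=\la v(\cF),v(\cF)\ra+2=\vv^2+2$, which is the asserted dimension. For the final inequality I would use that $\dim\Ext^1(\cF,\cF)\ge 0$, so $\vv^2\ge -2$; expanding the Mukai pairing as $\vv^2=c_1(\cF)^2-2r\,\ch_2(\cF)-2r^2=\Delta(\cF)-2r^2$ and rearranging gives $\Delta(\cF)\ge 2(r^2-1)$.

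The only genuinely deep input is the unobstructedness statement of Artamkin--Mukai, which I would treat as a black box since it is cited; everything else is bookkeeping with simplicity, Serre duality on a surface with trivial canonical bundle, and Riemann--Roch packaged into the Mukai pairing. The main obstacle, were one to prove it from scratch, would be showing that surjectivity of the trace map kills the obstruction class in $\Ext^2(\cF,\cF)$ — but within the present framework that step is imported, so the argument here is essentially a clean reduction plus a two-line Euler-characteristic computation.
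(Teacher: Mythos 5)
Your proposal is correct and follows essentially the same route as the paper: identify the germ of $\cM_{\vv}(S,h)$ at $[\cF]$ with the deformation space, use simplicity and Serre duality to get $\dim\Ext^0(\cF,\cF)=\dim\Ext^2(\cF,\cF)=1$, invoke Artamkin--Mukai for smoothness with tangent space $\Ext^1(\cF,\cF)$, and compute the dimension from the Mukai pairing as in~\eqref{mukpair} and~\eqref{extuno}. Your explicit verification that the trace map~\eqref{traccia} is an isomorphism is exactly the argument the paper gives in the introduction, so there is no substantive difference.
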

\section{Moduli of stable sheaves on elliptic $K3$ surfaces}
\subsection{The main result}
\setcounter{equation}{0}
In the present section we go through the proof of Theorem~\ref{modonk3} for $S$ an elliptic $K3$, 
under a suitable hypothesis on $r$ and $\xi_1$ -this is Theorem~\ref{vbk3ell} below.
\begin{ass}\label{ellass}
$S$ is a projective $K3$ surface with  an elliptic fibration 
\begin{equation}
\pi\colon S\to\PP^1, 
\end{equation}
and the Picard number of $S$ is $2$. 
\end{ass}
We emphasize that in general $\pi$ does not have a section, i.e.~the curve over $\CC(\PP^1)$ obtained by base change has genus $1$ but is not, strictly speaking, an elliptic curve.
An \emph{elliptic fiber} is a fiber of $\pi$. We let $f:=\pi^{*}c_1(\cO_{\PP^1}(1))$.
\begin{rmk}
If Assumption~\ref{ellass} holds the discriminant of the intersection form restricted on $\NS(S)$ is the negative of a square number. Conversely, given $0\not=m\in\ZZ$  there exist such surfaces with discriminant equal to $(-m^2)$. 
\end{rmk}
Let $\vv$ be a Mukai vector as in~\eqref{eccovi}.  We let 
\begin{equation}\label{enea}
n(\vv):=\frac{\vv^2}{2}+1,\quad a(\vv):=\frac{r^2(\vv^2+2r^2)}{4}.
\end{equation}
Recall that the Mukai pairing is even, and hence $n(\vv)$ is an integer. If $\cF$ is an $h$ stable sheaf on $S$ with $v(\cF)=\vv$, then  by Theorem~\ref{dimemme} $\cM_{\vv}(S,h)$ has dimension $2n(\vv)$ at $[\cF]$. 
If $\cF$ is a sheaf such that $v(\cF)=\vv$ then $a(\cF)=a(\vv)$, see Example~\ref{aeffek3} and the second equation in~\eqref{extuno}.
\begin{thm}[O'Grady~\cite{ogwt2},Yoshioka~\cite{yoshioka-vbs-on-ell-surfcs}]\label{vbk3ell}
Let $S$ be an elliptic $K3$ surface as above. Let $\vv\in H(S;\ZZ)$ be as in~\eqref{eccovi}.
Suppose that $\vv^2\ge -2$ and that $r$ is coprime to $q_S(f,\ell)$.  If $h$ is an $a(\vv)$-suitable polarization of $S$ then  $\cM_{\vv}(S,h)$ is non empty, irreducible, smooth of dimension 
$\vv^2+2$, and birational to $S^{[n(\vv)]}$. If $n(\vv)>0$ then  $\cM_{\vv}(S,h)$  is a HK variety of Type $K3^{[n(\vv)]}$.
\end{thm}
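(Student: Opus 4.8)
The plan is to transport the whole problem onto the elliptic fibration and then to peel off the rank by a relative Fourier--Mukai transform, reducing to the rank-one case where the moduli space is a Hilbert scheme. First I would clear away the formal part. The coprimality of $r$ and $q_S(f,\ell)$ forces the Mukai vector $\vv=(r,\ell,s)$ to be primitive: any integer dividing $\vv$ divides $r$ and $\ell$, hence divides $q_S(f,\ell)$, hence equals $1$. Since on a $K3$ surface the BBF form is the intersection form and $a(\vv)=r^2\Delta/4$ (Example~\ref{aeffek3}), the $a(\vv)$-walls coincide with the classical $\vv$-walls, so an $a(\vv)$-suitable class, lying in an open $a(\vv)$-chamber, is $\vv$-generic. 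By primitivity and genericity, GM semistability coincides with GM stability, whence $\cM_{\vv}(S,h)=\ov{\cM}_{\vv}(S,h)$ is projective, and Theorem~\ref{dimemme} makes it smooth of dimension $\vv^2+2=2n(\vv)$ at every point. What is left is non-emptiness, irreducibility, and birationality to $S^{[n(\vv)]}$.

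Next I would describe the points of $\cM_{\vv}(S,h)$ through the generic fiber. A generic fiber $C$ of $\pi$ is a smooth genus-one curve, $r(\cF|_C)=r$, and $\deg(\cF|_C)=q_S(f,\ell)$ is coprime to $r$; on such a curve every semistable sheaf of coprime rank and degree is automatically stable. Feeding this into Proposition~\ref{lagstab}, applied to the $a(\vv)$-suitable class $h$, I obtain that a torsion-free $\cF$ with $v(\cF)=\vv$ and $\sing\cF$ not dominating $\PP^1$ is $h$ slope-stable precisely when $\cF|_C$ is stable for generic $C$. Thus $h$-stability is governed by, and rigid along, the fibration, and the generic point of $\cM_{\vv}(S,h)$ parametrises fiberwise-stable sheaves.

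The heart of the proof is the relative Fourier--Mukai transform along $\pi$. I would form the compactified relative Jacobian $\wh{S}\to\PP^1$, again an elliptic $K3$, together with a relative Poincar\'e sheaf, and let $\Phi$ be the induced transform. Fiberwise it realises the classical transform on elliptic curves, which acts on the pair (rank, fiber-degree) by an element of $\SL_2(\ZZ)$ and sends fiberwise-stable sheaves of coprime rank and degree to fiberwise-stable sheaves; combined with twists by $\pi$-pullbacks it performs a Euclidean descent on (rank, fiber-degree) down to rank one, preserving the Mukai pairing and hence $n(\vv)$. A rank-one Mukai vector has moduli space isomorphic to a Hilbert scheme $\wh{S}{}^{[n(\vv)]}$ of ideal sheaves twisted by a line bundle, which is non-empty, irreducible, and a HK variety of Type $K3^{[n(\vv)]}$ when $n(\vv)>0$. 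Since each $\Phi$ is an isomorphism on the fiberwise-stable loci, the composite is a birational map $\cM_{\vv}(S,h)\dra \wh{S}{}^{[n(\vv)]}$, so $\cM_{\vv}(S,h)$ is non-empty, irreducible, of dimension $2n(\vv)$, and of Type $K3^{[n(\vv)]}$ (birational HK varieties being deformation equivalent).

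I expect the main obstacle to be the relative Fourier--Mukai step. One must construct the Poincar\'e sheaf over the \emph{singular} fibers of $\pi$ and control $\Phi$ on sheaves whose restriction degenerates there; prove that $\Phi$ preserves fiberwise stability with exactly the predicted Mukai vector; and show that the locus where $\Phi$ fails to be a biregular identification --- where the transform ceases to be an honest sheaf, or where $\sing\cF$ meets the discriminant of $\pi$ --- has codimension at least one, so that only birationality, not an isomorphism, is lost. A final point of care is matching the end product with $S^{[n(\vv)]}$ for the \emph{original} $S$: the descent naturally lands on the Hilbert scheme of the Fourier--Mukai partner $\wh{S}$, and identifying $\wh{S}{}^{[n(\vv)]}$ birationally with $S^{[n(\vv)]}$ (rather than merely deformation-equivalently) requires tracking the parity of the transform and the torsor structure of the relative Jacobian.
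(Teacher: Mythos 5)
Your proposal is correct in outline, but it follows a genuinely different route from the one presented in the paper: you invoke the relative Fourier--Mukai machinery of Bridgeland and Yoshioka (a transform along $\pi$ with kernel a relative Poincar\'e/universal sheaf, reducing the rank to one), whereas the proof given here is deliberately elementary and never leaves the category of sheaves. In the paper the Hilbert scheme is reached by hand: Atiyah's Theorem~\ref{fibell} over the generic fiber plus elementary modifications along elliptic fibers produce a rigid bundle with Mukai square $-2$ whose restriction to \emph{every} fiber is stable (Subsections~\ref{subsec:innocenzox}--\ref{subsec:renatozero}), and then further elementary modifications of this rigid bundle along stable rank-$r_0$, degree-$d_0$ bundles on $n$ chosen fibers define a degree-one rational map $T^{[n]}\dra\cM_{\vv}(S,h)$, with $T=\ov{\Pic}^{d_0}(S/\PP^1)$, whose dominance onto every component is proved by dimension counts (Subsection~\ref{subsec:positivo}). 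The two routes are secretly the same geometry: your $\wh{S}$ and the paper's $T$ are the same relative Jacobian, singled out by the B\'ezout pair $(r_0,d_0)$ with $q_S(f,\ell)\cdot r_0-r\cdot d_0=1$, and the paper's elementary modifications are exactly what the inverse transform of a twisted ideal sheaf looks like when written out at the level of sheaves. What your route buys is speed and conceptual clarity once Bridgeland's results are granted (full faithfulness makes generic injectivity, hence irreducibility, essentially formal); note also that your closing worry about identifying $\wh{S}^{[n(\vv)]}$ with $S^{[n(\vv)]}$ applies verbatim to the paper's proof, which likewise lands on $T^{[n]}$. What the paper's route buys --- and this is the point of the survey --- is that it avoids derived categories entirely and therefore transplants to Lagrangian fibrations on HK fourfolds of Type $K3^{[2]}$, where no relative Fourier--Mukai formalism with the required properties is available (see Subsection~\ref{subsec:fallito}).

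Two local repairs are needed in your write-up. First, twisting by $\pi$-pullbacks changes neither the rank nor the fiber degree, so such twists cannot drive a Euclidean descent on $(r,d)$; the standard fix is a \emph{single} transform, namely the one attached to the relative Jacobian $J_{a,b}$ with $rb-ad=\pm 1$ (coprimality of $r$ and $d=q_S(f,\ell)$ enters twice: it makes this matrix exist, and it guarantees the universal sheaf on $J_{a,b}\times_{\PP^1}S$ exists), or else twists by line bundles on $S$ of nonzero fiber degree. Second, the points of $\cM_{\vv}(S,h)$ are GM-semistable sheaves, and GM stability does not imply slope stability; before you may feed them into Proposition~\ref{lagstab} you must show they are slope-stable. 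Primitivity of $\vv$ alone does not do this. The argument is the one in Subsection~\ref{subsec:algardi}: a strictly slope-semistable sheaf admits, by Proposition~\ref{campol} (applicable because, by Remark~\ref{opportuno}, an $a(\vv)$-suitable $h$ lies in an open $a(\vv)$-chamber), a destabilizing subsheaf $\cE$ with $r\,c_1(\cE)=r(\cE)\,\ell$, and pairing this identity with $f$ contradicts $\gcd\{r,q_S(f,\ell)\}=1$. Both repairs use only tools you already cite, so the strategy stands.
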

\begin{rmk}
In~\cite{ogwt2} the result is proved under the hypothesis that $q_S(f,\ell)=1$. This is sufficient to obtain  Theorem~\ref{casoprim}, see Remark~\ref{rmk:puzzone}. 
\end{rmk}

\subsection{GM semistability is the same as slope stability, and restriction to a generic elliptic fiber is stable}\label{subsec:algardi}
\setcounter{equation}{0}
Since the Picard number of $S$ is $2$ and $h$ is $a(\vv)$-suitable, it belongs to an open $a(\vv)$-chamber - see Remark~\ref{opportuno}. It follows that every sheaf parametrized by  $\cM_{\vv}(S,h)$ is $h$ slope-stable. In fact assume that $[\cF]\in \cM_{\vv}(S,h)$ and $\cF$ is not  $h$ slope-stable. Then $\cF$ is strictly 
$h$ slope-semistable because it is $h$ GM semistable. By Proposition~\ref{campol} it follows that there exists a subsheaf $\cE\subset\cF$ such that $0<r(\cE)<r$ and $r c_1(\cE)=r(\cE)\ell$. Intersecting with $f$ we get that 
$$r \cdot q_S(f,c_1(\cE))=r(\cE) \cdot q_S(f,\ell),$$
 and this contradicts the hypothesis  that $r$ is coprime to $q_S(f,\ell)$.

Let $\cF$ be  a torsion free sheaf  with $v(\cF)=\vv$. By Proposition~\ref{lagstab} $\cF$ is $h$ slope-stable if and only if it restricts to a stable vector bundle  on a generic elliptic fiber $C_t$, because since $r$ is coprime to $q_S(f,\ell)$ there is no strictly semistable vector bundle on $C_t$ of rank $r$ and degree $q_S(f,\ell)$. 
\subsection{Twisting the Mukai vector}\label{subsec:torcere}
\setcounter{equation}{0}
 By Subsection~\ref{subsec:algardi} every sheaf parametrized by $\cM_{\vv}(S,h)$ is $h$ slope-stable, and hence it remains $h$ slope-stable after tensorization with any line bundle on $S$. It follows that we have an isomorphism of moduli schemes
\begin{equation}
\begin{matrix}
\cM_{\vv}(S,h) & \overset{\sim}{\lra} & \cM_{\ww_m}(S,h) \\
[\cF] & \mapsto & [\cF\otimes\cO_S(mF)]
\end{matrix}
\end{equation}
where $F$ is an elliptic fiber and  $\ww_m:=\vv\smile e^{mf}$.  
\begin{lmm}
Let  $\ww=(r,\ell+xf,t)$, where $x,t$ are integers. If $\ww^2=\vv^2$ then there exists $m\in\ZZ$ such that $\ww=\vv\cup e^{mf}$.
\end{lmm}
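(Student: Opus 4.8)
The plan is to reduce the statement to two short computations in the Mukai lattice and then to invoke the standing coprimality hypothesis of Theorem~\ref{vbk3ell}. First I would write out the twisted Mukai vector $\vv\cup e^{mf}$ explicitly. Since $f$ is the class of an elliptic fiber we have $q_S(f)=0$, i.e.\ $f^2=0$ in $H^4(S;\ZZ)$, so $e^{mf}=1+mf$ and cup product gives
\begin{equation*}
\vv\cup e^{mf}=\bigl(r,\ \ell+rmf,\ s+m\,q_S(\ell,f)\bigr).
\end{equation*}
Hence $\ww=\vv\cup e^{mf}$ holds precisely when $x=rm$ and $t=s+m\,q_S(\ell,f)$; the real task is therefore to produce an integer $m$ with $rm=x$.

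Second, I would translate the hypothesis $\ww^2=\vv^2$ through the Mukai pairing. A direct computation, again using $f^2=0$ to kill both the higher terms of $e^{mf}$ and the quadratic contribution of $xf$, yields $\vv^2=q_S(\ell)-2rs$ and $\ww^2=q_S(\ell)+2x\,q_S(\ell,f)-2rt$. The common term $q_S(\ell)$ cancels, so $\ww^2=\vv^2$ is equivalent to the single numerical relation
\begin{equation*}
x\,q_S(\ell,f)=r(t-s).
\end{equation*}

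The decisive step is to extract $r\mid x$ from this relation. The equation shows $r\mid x\,q_S(\ell,f)$, and here I would invoke the hypothesis in force throughout this section that $r$ is coprime to $q_S(f,\ell)$; this forces $r\mid x$, so $m:=x/r$ is a well-defined integer. With this $m$ the $H^2$-component of $\vv\cup e^{mf}$ equals $\ell+rmf=\ell+xf$, matching $\ww$, and the $H^4$-component equals $s+m\,q_S(\ell,f)=s+(t-s)=t$ by the numerical relation, so $\vv\cup e^{mf}=\ww$ as desired.

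I do not expect a genuine obstacle: once $f^2=0$ is exploited the whole argument is elementary bookkeeping in the graded ring $H(S;\ZZ)$. The only point that requires care --- and the sole place where the arithmetic hypothesis of Theorem~\ref{vbk3ell} actually enters --- is the divisibility $r\mid x$, which genuinely fails without coprimality (for example $r=2$, $q_S(\ell,f)=2$, $x=1$, $t-s=1$ satisfies $\ww^2=\vv^2$ yet admits no integer $m$). It is thus worth either recalling the coprimality assumption explicitly in the statement or noting that it is standing throughout the section.
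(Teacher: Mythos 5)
Your proof is correct and follows essentially the same route as the paper: expand both Mukai squares using $q_S(f)=0$, deduce $r\mid x\,q_S(\ell,f)$ and hence $r\mid x$ from the standing coprimality hypothesis of Theorem~\ref{vbk3ell}, and set $m=x/r$. The only cosmetic difference is the final verification: you check the $H^4$-component directly from the relation $x\,q_S(\ell,f)=r(t-s)$, whereas the paper invokes the fact that multiplication by $e^{mf}$ is an isometry of the Mukai lattice and then matches components, which amounts to the same computation.
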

\begin{proof}
Let us show that $r$ divides $x$. Since 
$$\ell^2+2x\ell\smile f-2rt=\ww^2=\vv^2=   \ell^2-2rs,$$
we get that $r$ divides $x\ell\smile f$, and hence $r\mid x$ because  $r$ is coprime to $q_S(f,\ell)$. Let $m:=\frac{x}{r}$. Then 
$$\vv\cup e^{mf}=r+(\ell+x f)+s\eta+m\ell\smile f.$$
Multiplication by $e^{\alpha}$ is an isometry of the Mukai lattice for any $\alpha\in H^2(S;\ZZ)$, hence $\vv\cup e^{mf}$ has  square equal to $\vv^2$, which is equal to $\ww^2$ by hypothesis.  Since the degree $0$ and degree $2$ components of $\vv\cup e^{mf}$ are equal to the corresponding components of $\ww$, it follows that  $\ww=\vv\cup e^{mf}$.
\end{proof}
\subsection{Existence of a stable vector bundle}\label{subsec:innocenzox}
\setcounter{equation}{0}
We prove the following result.
\begin{prp}\label{fibvettgen}
There exists a vector bundle  $\cE$ on $S$ such that $c_1(\cE)=\ell+xf$ for some $x\in\ZZ$ and 
 the restriction  of $\cE$ to a generic fiber is stable.
\end{prp}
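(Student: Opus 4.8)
The plan is to build $\cE$ by first constructing a stable bundle on the generic fibre of $\pi$ and then spreading it out over $S$. The key preliminary remark is that the prescription on $c_1(\cE)$ is essentially automatic. Indeed, suppose $\cE$ is any rank $r$ vector bundle whose restriction to a generic smooth fibre $C_t=\pi^{-1}(t)$ is stable of degree $d:=q_S(f,\ell)$. Then $q_S(c_1(\cE),f)=\deg(\cE|_{C_t})=d=q_S(\ell,f)$, so $c_1(\cE)-\ell\in f^{\perp}\cap\NS(S)$. Since $S$ has Picard number $2$ and $f$ is a primitive isotropic class, while the intersection form on $\NS(S)$ is nondegenerate of signature $(1,1)$, one has $f^{\perp}\cap\NS(S)=\ZZ f$; hence $c_1(\cE)=\ell+xf$ for some $x\in\ZZ$. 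It therefore suffices to produce a rank $r$ bundle restricting to a stable bundle of degree $d$ on the generic fibre, and $\gcd(r,d)=1$ (the hypothesis that $r$ is coprime to $q_S(f,\ell)$) is exactly Atiyah's condition for stable bundles of this rank and degree to exist on an elliptic curve.

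Next I would pass to the generic point $\eta$ of $\PP^1$ and work over the field $K:=\CC(\PP^1)$, over which the generic fibre $C_\eta$ is a smooth genus-one curve (possibly with no $K$-rational point). Over $\ov{K}$ the classification of Atiyah and Tu shows that, for each line bundle of degree $d$, there is a \emph{unique} stable bundle of rank $r$ with that determinant, because for $\gcd(r,d)=1$ the determinant map from the moduli space of stable bundles to $\Pic^d(C_{\ov{K}})$ is an isomorphism. I would apply this with determinant the restriction $L$ to $C_\eta$ of a line bundle on $S$ with first Chern class $\ell$: this $L$ is defined over $K$ and has degree $d$, and it produces over $\ov{K}$ a stable bundle $V_{\ov{K}}$ with $\det V_{\ov{K}}=L_{\ov{K}}$. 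By uniqueness the isomorphism class of $V_{\ov{K}}$ is $\Gal(\ov{K}/K)$-invariant; since a stable bundle is simple, the sole obstruction to descending it to a bundle $V$ on $C_\eta$ lies in $\mathrm{Br}(K)$, and $\mathrm{Br}(\CC(\PP^1))=0$ by Tsen's theorem. Hence $V$ is already defined over $K$.

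Finally I would globalise. Spreading $V$ out yields a vector bundle $\cV$ on $\pi^{-1}(U)$ for some dense open $U\subseteq\PP^1$, and after shrinking $U$ its restriction to every fibre over $U$ is stable, since stability is an open condition in families. I would then extend $\cV$ to a coherent sheaf $\wt{\cV}$ on $S$ and set $\cE:=\wt{\cV}^{\vee\vee}$, its reflexive hull; a reflexive sheaf on the smooth surface $S$ is locally free, so $\cE$ is a vector bundle, equal to $\cV$ away from the finitely many omitted fibres. Consequently $\cE|_{C_t}\cong V_t$ is stable for generic $t$, the fibre degree is $d$, and the first paragraph gives $c_1(\cE)=\ell+xf$.

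The crux is the middle step: descending the stable bundle from $\ov{K}$ to $K$ when $\pi$ has no section. It rests on two facts---the uniqueness of the fixed-determinant stable bundle, which forces Galois-invariance of its class, and the availability over $K$ of the degree-$d$ determinant $L=\cO_S(\ell)|_{C_\eta}$, which is what permits fixing that determinant---combined with the vanishing $\mathrm{Br}(K)=0$. Granting these, the remaining steps are routine spreading-out and reflexive-hull arguments; the only point to watch is that extending across the omitted fibres may change $c_1$ by a multiple of $f$, which is harmless because $x$ is unconstrained.
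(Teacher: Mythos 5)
Your proof is correct, and its skeleton coincides with the paper's: construct the stable bundle on the generic fibre $\cC$ over $K=\CC(\PP^1)$ with determinant induced by $\ell$, spread it out to a coherent sheaf on $S$, and pass to the double dual (reflexive, hence locally free on a smooth surface) to obtain $\cE$ with stable generic restriction and $c_1(\cE)=\ell+xf$. The genuine difference lies in where existence over the non-closed field $K$ comes from. The paper simply quotes Theorem~\ref{fibell}, a version of Atiyah's theorem stated for geometrically irreducible smooth genus-one curves over an \emph{arbitrary} field $K$ (existence and uniqueness of a stable rank-$r$ bundle with prescribed determinant $\cL$ precisely when $\gcd(r,\deg\cL)=1$), and gives no proof. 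You instead derive this statement for $K=\CC(\PP^1)$: over $\ov{K}$ the Atiyah--Tu classification gives a unique stable bundle with determinant $\cL_{\ov{K}}$, uniqueness forces Galois-invariance of its isomorphism class, simplicity makes the descent obstruction a class in $\mathrm{Br}(K)$, and Tsen's theorem kills it. This is exactly the content hidden in the paper's citation; since Atiyah's original results are over algebraically closed fields, some such descent is genuinely needed when $\pi$ has no section, and your argument makes transparent why it works here (over an arbitrary field, where $\mathrm{Br}(K)$ can be nonzero, it would not be automatic). You also justify the assertion $c_1(\cE)=\ell+xf$, which the paper states without comment, via the degree computation and the observation that $f^{\perp}\cap\NS(S)=\ZZ f$ (using $\rho(S)=2$, nondegeneracy of the intersection form, and primitivity of $f$). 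Both arguments then conclude identically with spreading out, openness of stability, and the reflexive-hull step; the only caveat in yours --- that extending across the omitted fibres can shift $c_1$ by a multiple of $f$ --- is, as you note, harmless since $x$ is unconstrained.
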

First we recall results on vector bundles on curves of genus $1$. Let $\cC$ be a geometrically irreducible smooth curve of genus $1$ over a  field $K$. We do not assume that $K$ is algebraically closed.  
\begin{thm}[Atiyah~\cite{atiell}]\label{fibell}
Let $r$ be a positive integer and $\cL$ be a line bundle on $\cC$. There exists a stable rank $r$  vector bundle $\cF$ on $\cC$  such that $\det \cF\cong \cL$ if and only if $r$ is coprime to $\deg \cL$. If that is the case, then such a vector bundle is unique up to isomorphism. 
\end{thm}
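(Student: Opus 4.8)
The plan is to deduce the statement over a general field $K$ from Atiyah's classification over $\ov{K}$ by Galois descent, treating the characterization, uniqueness and existence separately. Write $G=\Gal(\ov{K}/K)$ and $\ov{\cC}=\cC\times_K\ov{K}$, which is an elliptic curve over $\ov{K}$, and set $d:=\deg\cL$. Over $\ov{K}$ Atiyah's theory gives everything: when $r$ is coprime to $d$ every semistable bundle of rank $r$ and degree $d$ is stable, and the determinant induces a bijection between isomorphism classes of such stable bundles and $\Pic^{d}(\ov{\cC})$; in particular there is a unique stable $\ov{\cF}$ with $\det\ov{\cF}\cong\ov{\cL}$. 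Two general facts make the coprime case rigid and are used throughout: slope-(semi)stability is insensitive to base-field extension (we are in characteristic $0$), and for $\gcd(r,d)=1$ semistability coincides with stability over either field, so the notions $K$-stable, $\ov{K}$-stable and $\ov{K}$-semistable all agree.

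First I would settle uniqueness over $K$. If $\cF_1,\cF_2$ are $K$-stable of rank $r$ with determinant $\cL$, their base changes are $\ov{K}$-stable with determinant $\ov{\cL}$, hence isomorphic over $\ov{K}$. Since a stable bundle is simple, $\underline{\Hom}(\ov{\cF}_1,\ov{\cF}_2)^{\times}$ is a torsor under $\Aut(\ov{\cF}_1)=\ov{K}^{\times}=\GG_m$ carrying a natural $G$-action, and Hilbert's Theorem~90, $H^{1}(G,\ov{K}^{\times})=0$, provides a $K$-rational isomorphism, so $\cF_1\cong\cF_2$ over $K$.

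The heart of the matter, and the step I expect to be the main obstacle, is existence over $K$. Because $\cL$ is defined over $K$, for each $\sigma\in G$ the bundle $\sigma^{*}\ov{\cF}$ is again $\ov{K}$-stable with determinant $\ov{\cL}$, so uniqueness forces $\sigma^{*}\ov{\cF}\cong\ov{\cF}$: the bundle $\ov{\cF}$ is $G$-invariant. It descends to $\cC$ precisely when its descent datum can be rigidified, and since $\ov{\cF}$ is simple the sole obstruction is a class $c\in H^{2}(G,\ov{K}^{\times})=\mathrm{Br}(K)$, namely the scalars by which the chosen isomorphisms fail the cocycle identity. The key point is that $c$ is annihilated by two coprime integers. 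Applying $\det=\bw{r}(-)$ carries $\ov{\cF}$ to $\ov{\cL}$, on which a scalar acts with weight $r$; as $\ov{\cL}$ descends (it is $\cL$), this gives $r\cdot c=0$. Applying the determinant of cohomology $\det R\Gamma(\ov{\cC},-)$, a scalar acts with weight $\chi(\ov{\cF})=d+r(1-g)=d$ (as $g=1$), and the target is the neutral $\GG_m$-gerbe of one-dimensional spaces over a point, so $d\cdot c=0$. Since $\gcd(r,d)=1$ we conclude $c=0$, so $\ov{\cF}$ descends to a bundle $\cF$ on $\cC$; its base change is stable, whence $\cF$ is $K$-stable, and $\det\cF$ and $\cL$ are $K$-line bundles with isomorphic base change, hence isomorphic over $K$ again by Hilbert~90. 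Everything here is formal once the Brauer obstruction is controlled, and controlling it via the two weight computations is the crux.

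Finally, necessity of coprimality is essentially geometric. If $\gcd(r,d)=h>1$ then by Atiyah no stable bundle of rank $r$ and degree $d$ exists already over $\ov{K}$, so the base change of any putative $K$-stable $\cF$ is strictly semistable; its canonical socle (equivalently, its Harder--Narasimhan/Jordan--H\"older data) is a proper subsheaf of the same slope, and being canonical it is $G$-invariant and descends to a $K$-rational destabilizing subsheaf of intermediate rank, contradicting $K$-stability. Assembling the three parts yields the theorem.
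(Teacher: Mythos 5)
The paper itself gives no proof of this statement: it is quoted as Atiyah's theorem, and Atiyah's classification is carried out over an algebraically closed field, so what you are supplying is the (folklore) descent argument needed to make sense of the statement over a general field $K$ of characteristic zero. Your treatment of the coprime case --- which is the only case the paper ever uses --- is correct and is essentially the standard argument. For uniqueness, coprimality makes $K$-semistability, $K$-stability, geometric semistability and geometric stability all coincide, so $\Hom(\cF_1,\cF_2)$ is a one-dimensional $K$-vector space (by flat base change, or by your Hilbert~90 torsor argument), and a nonzero map between stable bundles of equal slope is an isomorphism. For existence, the unique $\ov{\cF}$ over $\ov{K}$ is Galois-invariant by uniqueness and simple, so the obstruction to descent is a class $c\in H^2(\Gal(\ov{K}/K),\ov{K}^{\times})=\mathrm{Br}(K)$, and your two weight computations are exactly right: the determinant (which descends, being $\cL$) gives $r\cdot c=0$, and $\det R\Gamma(\ov{\cC},-)$ gives $\chi(\ov{\cF})\cdot c=d\cdot c=0$, since $\chi=d$ on a genus-one curve and any $\GG_m$-valued cocycle realized on a one-dimensional space is a coboundary; coprimality then kills $c$. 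You should add a word about working over a finite Galois subextension of $\ov{K}/K$ so that the cocycle formalism and effectivity of descent apply, but that is routine.

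The genuine gap is in your necessity argument, and it is not cosmetic. When $\ov{\cF}:=\cF\otimes_K\ov{K}$ is strictly semistable it need not admit a \emph{proper} canonical destabilizing subsheaf: if $\ov{\cF}$ is polystable its socle is all of $\ov{\cF}$, and the Jordan--H\"older filtration is not canonical, so nothing descends. This case really occurs, and in fact the ``only if'' direction is \emph{false} for naive $K$-stability: take a quadratic extension $L/K$, let $\pi\colon \cC_L\to\cC$ be the base-change map (finite \'etale of degree $2$), and let $\cG\in\Pic^0(\cC_L)$ be a line bundle not isomorphic to its $\Gal(L/K)$-conjugate. Then $\cF=\pi_{*}\cG$ has rank $2$ and degree $0$ (so $\gcd(r,\deg\det\cF)=2$), its base change is $\cG_1\oplus\cG_2$ with $\cG_1\not\cong\cG_2$ swapped by the Galois action, and the only rank-one degree-zero subsheaves of $\cG_1\oplus\cG_2$ are $\cG_1$ and $\cG_2$, neither of which is Galois-invariant; hence $\cF$ has no $K$-rational destabilizing subsheaf and is $K$-slope-stable. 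So the theorem must be read with ``stable'' meaning \emph{geometrically} stable (stable after base change to $\ov{K}$) --- which is also the reading the paper needs, since it restricts the spread-out sheaf to closed fibers, and which agrees with $K$-stability precisely in the coprime case. Under that reading the ``only if'' direction is immediate from Atiyah's nonexistence of stable bundles of non-coprime rank and degree over $\ov{K}$, and no descent is needed; your socle argument, which attempts to prove the stronger naive statement, cannot be repaired.
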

Now let $K:=\CC(\PP^1)$ and let $\cC\to K$ be the generic fiber of our elliptic  $K3$ surface. Then 
$\ell\in H^{1,1}_{\ZZ}(S)$ corresponds to a unique divisor class on $S$, and hence it determines a  line bundle $\cL$ on $\cC$. 
By the hypotheses of Theorem~\ref{vbk3ell},  $r$ is coprime to $\deg \cL$. Let $\cF$ be the (unique up to isomorphism) stable rank $r$ vector bundle on $\cC$ with determinant isomorphic to $\cL$. There exists a (non unique) coherent sheaf $\cE$ on $S$ which gives $\cF$ after base change. The restriction of $\cE$ to a generic elliptic fiber  is locally free  and  slope-stable. It follows that $\cE$ is locally free away from a finite union of elliptic fibers. 
Replacing $\cE$ by its double dual the restriction to a generic fiber does not change, and in addition $\cE$ is now locally free.  Since $c_1(\cE)=\ell+xf$ for some $x\in\ZZ$, we have proved Proposition~\ref{fibvettgen}.
\subsection{Elementary modifications along elliptic fibers}\label{subsec:gianicolense}
\setcounter{equation}{0}
Let $\cE$ be a vector bundle on $S$ satisfying the thesis of Proposition~\ref{fibvettgen}. Let  $C$ be an elliptic  fiber. Suppose that 
\begin{equation}\label{abici}
0\lra \cA\lra \cE_{|C}\lra \cB \lra 0,
\end{equation}
is an exact sequence where $\cB$ is torsion-free (recall  $\rho(S)=2$ and hence $C$ is irreducible). Let $\cE_1$ be the associated elementary modification of $\cE$, i.e.~the sheaf on $S$ fitting into the exact sequence 
\begin{equation}\label{traselm}
0\lra \cE_1\lra \cE\lra i_{*}\cB \lra 0,
\end{equation}
where $i\colon C\hra S$ is the inclusion map and $ \cE\lra i_{*}\cB$ is obtained by restricting to $C$ and composing with the map in~\eqref{abici}. Since $\cB$ is torsion-free, the sheaf $i_{*}\cB$ has depth $1$, and hence it has projective dimension $1$.  It follows that $\cE_1$ is locally-free. Restricting~\eqref{traselm} to $C$ one gets an exact sequence
\begin{equation}
0\lra \cB\lra \cE_{1|C}\lra \cA \lra 0.
\end{equation}
One reconstructs $\cE$ from the above exact sequence because we have
\begin{equation}
 0\lra \cE\otimes\cO_S(-C)\lra \cE_1\lra i_{*}\cA \lra 0.
\end{equation}
Notice that the restrictions of $\cE_1$ and $\cE$ to an elliptic fiber different from $C$ are isomorphic, and hence $\cE_1$ is $h$ slope-stable by Subsection~\ref{subsec:algardi}. Thus $[\cE_1]\in\cM_{\vv_1}(S,h)$ where $\ww_1:=v(\cE_1)$. Let  $\ww:=v(\cE)$. We have 
\begin{equation}
\ww_1=v(\cE_1)=v(\cE)-v(i_{*}\cB)=\ww-(0,r(\cB)f,\deg\cB).
\end{equation}
Hence
\begin{equation}\label{duequad}
\ww_1^2=\ww^2-2r\cdot r(\cB)\cdot\left(\mu(\cF_{|C})-\mu(\cB)\right).
\end{equation}
\begin{prp}\label{tondo}
Let $\cE$ be a vector bundle on $S$ satisfying the thesis of Proposition~\ref{fibvettgen}. There exists a vector bundle $\cF$ on $S$, isomorphic to $\cE$ away from a finite union of fibers, such that 
 the restriction of $\cF$ to \emph{every}  elliptic  fiber is stable. 
\end{prp}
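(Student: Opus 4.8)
The plan is to construct $\cF$ from $\cE$ by a finite sequence of elementary modifications along single fibers, using the Mukai square $\ww^2=v(\cE)^2$ as a strictly decreasing integer monovariant. First I would record the crucial numerical fact underlying everything: on every fiber $C=C_t$ the restriction $\cE_{|C}$ has rank $r$ and degree $c_1(\cE)\smile f=q_S(\ell,f)$, using $c_1(\cE)=\ell+xf$ and $f^2=0$. Since $\rho(S)=2$ forces every fiber to be irreducible, and since $r$ is coprime to $q_S(\ell,f)$, no torsion-free sheaf of this rank and degree on the integral curve $C$ can be strictly semistable: a subsheaf $\cA$ of equal slope and rank $0<r(\cA)<r$ would force $r\mid r(\cA)$, a contradiction. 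Hence on each fiber semistability and stability coincide, so ``$\cE_{|C}$ not stable'' is equivalent to ``$\cE_{|C}$ not semistable''. Moreover an elementary modification along a fiber leaves $c_1\smile f$ unchanged (it replaces $c_1$ by $c_1-r(\cB)f$), so this coprimality, and with it the equivalence above, persists throughout the process.

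Next I would set up the induction. Because $\cE_{|C}$ is stable for generic $t$, only finitely many fibers $C$ are bad (restriction not stable, hence not semistable). Picking such a $C$, let $\cA\subset\cE_{|C}$ be the maximal destabilizing subsheaf, which is saturated, so that $\cB:=\cE_{|C}/\cA$ is torsion-free; let $\cE_1$ be the associated elementary modification of Subsection~\ref{subsec:gianicolense}. Then $\cE_1$ is again a vector bundle (because $\cB$ is torsion-free) and is isomorphic to $\cE$ away from $C$, so it still restricts to a stable bundle on the generic fiber. By Proposition~\ref{lagstab}(i) it is therefore $h$ slope-stable, hence GM-stable, so $\ww_1^2\ge -2$ by Theorem~\ref{dimemme}; here $h$ remains suitable because $a(\vv)$ increases with $\vv^2$ and suitability for a larger constant implies suitability for a smaller one. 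Crucially, since $\cA$ is strictly destabilizing we have $\mu(\cB)<\mu(\cE_{|C})$, so formula~\eqref{duequad} gives $\ww_1^2<\ww^2$, and evenness of the Mukai pairing yields $\ww_1^2\le\ww^2-2$.

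Finally I would iterate: each modification along a bad fiber strictly lowers the even integer $\ww^2$ while keeping it $\ge -2$, so the process terminates after finitely many steps. When it stops there are no bad fibers, i.e.\ $\cF$ restricts to a stable bundle on every fiber, and $\cF$ is isomorphic to $\cE$ off the finite union of fibers modified — which is exactly the assertion. The main obstacle, and the point that makes the monovariant argument close, is precisely the coprimality step in the first paragraph: without ``semistable $=$ stable'' on every fiber one could get stuck at a strictly semistable fiber, where the maximal destabilizing subsheaf has $\mu(\cB)=\mu(\cE_{|C})$ and the modification fails to decrease $\ww^2$, breaking termination. The remaining ingredients (torsion-freeness of $\cB$ giving local freeness of $\cE_1$, persistence of generic-fiber stability, and the lower bound $\ww^2\ge -2$) are supplied by the cited results.
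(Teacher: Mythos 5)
Your proposal is correct and follows essentially the same route as the paper: iterate elementary modifications along bad fibers, use coprimality of $r$ and $q_S(f,\ell)$ to turn ``not stable'' into ``unstable'' so that \eqref{duequad} forces a strict drop in $v(\cE_i)^2$, and terminate via $h$ slope-stability of each $\cE_i$ (from stability on the generic fiber) together with the Artamkin--Mukai bound $v(\cE_i)^2\ge -2$. You even make explicit two points the paper leaves implicit --- persistence of the coprimality and of the suitability of $h$ under modifications --- so no changes are needed.
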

\begin{proof}
Suppose that there exists an elliptic fiber $C$ such that $\cE_{|C}$ is not slope-stable.  Then $\cE_{|C}$ is slope-unstable because  $r$ is coprime to $q_S(f,\ell)$. Let~\eqref{abici} be a desemistabilizing quotient with $\cB$ torsion-free, and let $\cE_1$ be the elementary modification of $\cE$ defined by~\eqref{traselm}. 
 Then $v(\cE_1)^2<v(\cE)^2$ by~\eqref{duequad}. 
 If $\cE_1$ restricts to a non stable vector bundle on an elliptic  fiber  
we iterate this process. We get vector bundles $\cE=\cE_0$, $\cE_1,\ldots,\cE_m$, and we must stop after a finite number of steps because each $\cE_i$ is $h$ slope-stable by Subsection~\ref{subsec:algardi}, and $v(\cE_0)^2>v(\cE_1)^2>\ldots \ge -2$. The last vector bundle $\cF=\cE_m$  restricts to a stable vector bundle on every elliptic  fiber. 
\end{proof}
\subsection{Zero dimensional moduli spaces}\label{subsec:renatozero}
\setcounter{equation}{0}
\begin{prp}\label{enerzero}
There exists a vector bundle $\cF$ on $S$ such that 
\begin{equation*}
v(\cF)=(r,\ell+xf,t)
\end{equation*}
 for some $x,t\in\ZZ$, with the property that the restriction of $\cF$ to every elliptic fiber  is stable. Moreover, letting $\ww:=v(\cF)$, we have $\cM_{\ww}(S,h)=\{[\cF]\}$ and $\ww^2=-2$.
\end{prp}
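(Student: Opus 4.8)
The plan is to split the statement into an \emph{existence} part—producing a vector bundle that is stable on every elliptic fiber, has $v(\cF)=(r,\ell+xf,t)$ for some $x,t$, and satisfies $\ww^2=-2$—and a \emph{uniqueness} part, namely that any such rigid fiberwise-stable bundle is the only point of $\cM_\ww(S,h)$. By Proposition~\ref{tondo} I already have a bundle stable on every fiber with $v(\cF)=(r,\ell+xf,t)$, and by Theorem~\ref{dimemme} its square satisfies $\ww^2\ge-2$. So the whole content of the existence part is to realize the \emph{minimal} value $\ww^2=-2$ while keeping fiberwise stability and the constraint $c_1\equiv\ell\pmod{\ZZ f}$; recall that the fiber degree $q_S(f,c_1)=q_S(f,\ell)=:d$ is preserved both by twisting by $\cO_S(mf)$ and by the elementary modifications of Subsection~\ref{subsec:gianicolense}, so $r$ and $d$ stay fixed, with $\gcd(r,d)=1$.

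\emph{Existence.} First note that an elementary modification along a fiber on which $\cF$ is \emph{already} stable can only \emph{increase} $\ww^2$: by~\eqref{duequad} and the fact that a proper quotient of a stable bundle on a curve has strictly larger slope, one has $\ww_1^2>\ww^2$. Hence the modifications of Subsection~\ref{subsec:gianicolense} cannot by themselves push $\ww^2$ below the value reached in Proposition~\ref{tondo}. To reach the minimum I would instead pass through the relative Fourier--Mukai transform $\Phi$ attached to $\pi\colon S\to\PP^1$: it is a Hodge isometry of Mukai lattices (so it preserves $\ww^2$) and it carries sheaves that are stable of rank $r$, degree $d$ on a generic fiber to sheaves that are fiberwise stable of rank $r'$, degree $d'$, where $(r',d')$ is the image of $(r,d)$ under an element of $SL_2(\ZZ)$. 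Iterating $\Phi$ along the Euclidean algorithm applied to the coprime pair $(r,d)$, I reduce to the fiberwise rank-one case on the FM-dual elliptic $K3$ surface $\hat S$, where a torsion-free sheaf is $\cI_Z\otimes L$ of square $2\,\mathrm{len}(Z)-2\ge-2$. Choosing $Z=\es$, i.e. a line bundle $L$ (automatically stable on every fiber, of square $-2$), and transforming back by $\Phi^{-1}$, I obtain a bundle $\cF$ on $S$, fiberwise stable of rank $r$ and fiber degree $d$, with $\ww^2=-2$; a final twist by a suitable $\cO_S(mf)$ (Subsection~\ref{subsec:torcere}) puts $c_1(\cF)$ in the coset $\ell+\ZZ f$.

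\emph{Uniqueness.} Since $\cF$ is stable on every fiber it is $h$ slope-stable (Subsection~\ref{subsec:algardi}), hence GM stable, so $[\cF]\in\cM_\ww(S,h)$. For the reverse inclusion let $[\cF']\in\cM_\ww(S,h)$ be arbitrary; by Subsection~\ref{subsec:algardi} $\cF'$ is again $h$ slope-stable, so it has the same slope as $\cF$. By~\eqref{mukpair},
\[
\chi(\cF,\cF')=-\langle\ww,\ww\rangle=-\ww^2=2 .
\]
If $\cF\not\cong\cF'$ then, both being slope-stable of equal slope, every nonzero homomorphism between them would be an isomorphism, so $\Hom(\cF,\cF')=0$; Serre duality together with $K_S\cong\cO_S$ gives $\Ext^2(\cF,\cF')\cong\Hom(\cF',\cF)^\vee=0$ as well, whence $\chi(\cF,\cF')=-\dim\Ext^1(\cF,\cF')\le0$, contradicting $\chi=2$. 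Therefore $\cF'\cong\cF$ and $\cM_\ww(S,h)=\{[\cF]\}$.

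The hard part is the existence of the rigid member: everything rests on having at one's disposal a relative Fourier--Mukai transform for $\pi$ that preserves fiberwise (semi)stability and the Mukai pairing. The delicate points are that $\pi$ need not admit a section (only a multisection, since $\gcd(r,d)=1$) and that it has singular fibers, so one must work with the relative moduli space of fiberwise-stable sheaves and verify both that it is again an elliptic $K3$ and that $\Phi$ is well behaved over the whole base; tracking $c_1$ through $\Phi$ and correcting by $\cO_S(mf)$ to land in the coset $\ell+\ZZ f$ is the remaining bookkeeping.
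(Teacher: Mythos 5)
Your uniqueness argument is correct and is in fact more economical than the paper's: granting $\ww^2=-2$, the computation $\chi(\cF,\cF')=-\la\ww,\ww\ra=2$ from~\eqref{mukpair}, together with Serre duality and the fact that a nonzero map between slope-stable sheaves with equal Mukai vector is an isomorphism, immediately forces $\cF'\cong\cF$ for every $[\cF']\in\cM_{\ww}(S,h)$. But this makes your whole proof rest on first establishing $\ww^2=-2$, and that is exactly where the proposal has a genuine gap. Your route to $\ww^2=-2$ invokes a relative Fourier--Mukai transform $\Phi$ for $\pi\colon S\to\PP^1$ which must (i) exist despite the absence of a section and the presence of singular fibers, (ii) preserve fiberwise stability in both directions over the whole base, (iii) act on Mukai vectors by an isometry realizing an $\SL_2(\ZZ)$-action on the pair (fiber rank, fiber degree), and (iv) send sheaves to sheaves (not complexes) with controlled local freeness. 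None of this is developed or cited --- it is the content of Bridgeland's theory of FM transforms for elliptic surfaces --- and you acknowledge yourself that these are ``the delicate points''. As written, the existence half is a plan contingent on machinery at least as heavy as the result being proved, so the proposal does not constitute a proof.

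Moreover the detour is unnecessary, and this is the insight the paper's proof turns on: $\ww^2=-2$ is not an input to be engineered by a new construction, it is an \emph{output} of the uniqueness argument. Take $\cF$ as produced by Proposition~\ref{tondo} (stable on every fiber, a priori only $\ww^2\ge -2$). If $\cG$ is any locally free member of $\cM_{\ww}(S,h)$ whose restriction to every fiber is stable, then Atiyah's Theorem~\ref{fibell} gives $\cG_{|C}\cong\cF_{|C}$ on each fiber $C$ (the restricted determinants agree, since $c_1(\cG)-c_1(\cF)$ is a multiple of $f$ and $\cO_S(f)_{|C}\cong\cO_C$), whence $\cG\cong\cF\otimes\pi^{*}\cL_0$ for a line bundle $\cL_0$ on $\PP^1$, and then $\cG\cong\cF$ because the Mukai vectors agree. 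If instead some restriction of $\cG$ is unstable, the elementary modifications of Subsection~\ref{subsec:gianicolense} produce $\cG_m$ with $v(\cG_m)^2<v(\cG)^2$ which is fiberwise stable everywhere, and the same comparison gives $v(\cG_m)^2=v(\cF)^2=v(\cG)^2$, a contradiction. So the locally free locus of $\cM_{\ww}(S,h)$ near $[\cF]$ is the single reduced point $[\cF]$, and Theorem~\ref{dimemme} (smoothness of dimension $\ww^2+2$ at a stable point) then yields $\ww^2=-2$; non-locally-free members are excluded by the double-dual argument. Your observation via~\eqref{duequad} that modifications along stable fibers can only increase $\ww^2$ is correct, but it led you to conclude, wrongly, that a new construction is needed to reach the minimum: the bundle of Proposition~\ref{tondo} is already rigid.
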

\begin{proof}
Existence of $\cF$ has been proved in Proposition~\ref{tondo}. It remains to prove  that   $\cM_{\ww}(S,h)=\{[\cF]\}$ and $\ww^2=-2$. We claim that it suffices to prove that    if 
 $[\cG]\in\cM_{\ww}(S,h)$ with $\cG$ locally-free, then $\cG\cong\cF$. In fact this statement implies that $\ww^2=-2$ by Theorem~\ref{dimemme}, and then it follows also that every sheaf parametrized by $\cM_{\ww}(S,h)$ is locally-free (if 
 $[\cH]\in\cM_{\ww}(S,h)$  is not locally-free,  
  then $\cH$ is $h$ slope-stable by Subsection~\ref{subsec:algardi}, hence so is the double dual $\cH^{**}$, but  $v(\cH^{**})^2<v(\cH)^2=-2$, contradicting
 Theorem~\ref{dimemme}). Now suppose that  $[\cG]\in\cM_{\ww}(S,h)$ with $\cG$ locally-free. By Subsection~\ref{subsec:algardi} the restriction of $\cG$ to a generic  elliptic fiber is stable. Assume first that the restriction of $\cG$ to all elliptic fibers is stable. 
  By Atiyah's Theorem~\ref{fibell} the restrictions of $\cF$ and $\cG$ to each  elliptic fiber are isomorphic stable vector bundles. It follows that there exists a line-bundle $\cL_0$ on $\PP^1$ such that $\cG\cong\cF\otimes\pi^{*}\cL_0$. Since $v(\cG)=v(\cF)$ it follows that $\cL_0$ is trivial, i.e.~$\cG\cong\cF$. It remains to show the  that the restriction of $\cG$ to all fibers is stable. Suppose the contrary. By applying the procedure described above we arrive at a vector bundle $\cG_m$ such that $v(\cG_m)^2<v(\cG)^2$ and  the 
   restriction of $\cG_m$ to all fibers is stable. Arguing as above we get that there exists  a line-bundle $\cL_0$ on $\PP^1$ such that $\cG_m\cong\cF\otimes\pi^{*}\cL_0$. Hence 
\begin{equation}
v(\cG_m)^2=v(\cF\otimes\pi^{*}\cL_0)^2=v(\cF)^2=v(\cG)^2.
\end{equation}
This is a contradiction.
\end{proof}
At this point we have proved the statement of Theorem~\ref{vbk3ell} in the case $n(\vv)=0$. In fact it follows from Subsection~\ref{subsec:torcere} and Proposition~\ref{enerzero}.
\subsection{Moduli spaces of positive dimension}\label{subsec:positivo}
\setcounter{equation}{0}
We recall that $\dim\cM_{\vv}(S,h)=2n(\vv)$. We assume that $n(\vv)>0$. In order to simplify notation we let $n=n(\vv)$. The key step is to define a rational map from $T^{[n]}$ to   $\cM_{\vv}(S,h)$, where $T$ is a $K3$ surface.
Since the result is trivially true for $r=1$, we may assume that $r\ge 2$. 
Since $q_S(f,\ell)$ and $r$ are coprime (and  $r\ge 2$) there exist (unique) integers $r_0,d_0$ such that
\begin{equation}\label{mascomdiv}
q_S(f,\ell)\cdot r_0-r\cdot d_0=1,\quad 0<r_0<r.
\end{equation}
Let
\begin{equation}
\ww:=(r,\ell+n (r-r_0) f,s+n(q_S(f,\ell)-d_0)).
\end{equation}
Then $\ww^2=-2$, and hence by Subsection~\ref{subsec:renatozero} there exists an $h$ slope-stable vector bundle $\cF$ on $S$ (unique up to isomorphism) with $v(\cF)=\ww$. Moreover the restriction of $\cF$ to every elliptic fiber is stable by Proposition~\ref{enerzero}. Let 
$T:=\ov{\Pic}^{d_0}(S/\PP^1)$ be the relative moduli scheme parametrizing degree $d_0$ rank $1$ torsion-free sheaves on  fibers of $\pi\colon S\to\PP^1$.  Then $T$ is a $K3$ surface (it is smooth by deformation theory, and by deforming the elliptic fibration 
$\pi\colon S\to\PP^1$ to an elliptic $K3$ fibration $\pi_0\colon S_0\to\PP^1$  with a section, we see that it deforms to $S_0$).

We define a rational map 
\begin{equation}\label{aladin}
T^{[n]}  \dra  \cM_{\vv}(S,h) \\
\end{equation}
as follows. A generic point of $T^{[n]}$ corresponds to  an unordered $n$-tuple 
\begin{equation}\label{wolfie}
(C_1,L_1),\ldots,(C_n,L_n),
\end{equation}
 where 
$C_1,\ldots,C_n$ are distinct elliptic fibers, and $L_i$ is a line-bundle (unique up to isomorphism) of degree $d_0$ on $C_i$. For $k\in\{1,\ldots,n\}$ let $\cB_k$ be a stable rank $r_0$  vector bundle of degree $d_0$ on $C_k$ - such a vector bundle exists and is unique  up to isomorphism by~\eqref{mascomdiv} and Atiyah's Theorem. Since $\cF_{|C_k}$ and $\cB_k$ are stable, Equation~\eqref{mascomdiv} gives that $H^0(C_k,(\cF_{|C_k})^{\vee}\otimes \cB_k)=0$ and hence
\begin{equation*}
h^0(C_k,\cB_k^{\vee}\otimes (\cF_{|C_k}))=\chi(C_k,\cB_k^{\vee}\otimes (\cF_{|C_k}))=1.
\end{equation*}
By stability of $ \cF_{|C_k}$, a non zero map $\cB_k\to \cF_{|C_k}$ is an injection of vector bundles. It follows that we have an exact sequence of vector bundles
\begin{equation}\label{dritto}
0\lra \cB_k \lra \cF_{|C_k}\lra \cA_k\lra 0,
\end{equation}
unique up to isomorphism.  Let $\cE$ be the elementary modification of $\cF$ defined by the exact sequence
\begin{equation}\label{sambuco}
0\lra \cE \lra \cF\lra \bigoplus\limits_{k=1}^n i_{k,*}(\cA_k)\lra 0,
\end{equation}
where $i_k\colon C_k\to S$ is the inclusion map. Then $\cE$ is a vector bundle. Since the restrictions of $\cE$ and $\cF$ to an elliptic fiber different from one of $C_1,\ldots,C_n$ are isomorphic, $\cE$ is $h$ slope-stable. We have
\begin{equation*}
v( \cE)= v(\cF)-\sum\limits_{k=1}^n v(i_{k,*}(\cA_k))=v(\cF)-\sum\limits_{k=1}^n (0,(r-r_0)f,q_S(f,\ell)-d_0)=(r,\ell,s).
\end{equation*}
Hence $[\cE]\in\cM_{\vv}(S,h)$. We define the rational map in~\eqref{aladin} by sending to the point in $T^{[n]}$ corresponding to the unordered $n$-tuple in~\eqref{wolfie} to $[\cE]$. We claim that the map has degree $1$ onto its image. In fact suppose that 
$\cE$ fits into the exact sequence in~\eqref{sambuco}. Let $k\in\{1,\ldots,n\}$. Restricting~\eqref{sambuco} to $C_k$ we get an exact sequence
\begin{equation}\label{rovescio}
0\lra \cA_k \lra \cE_{|C_k}\lra \cB_k\lra 0.
\end{equation}
By~\eqref{mascomdiv} we have $\mu(\cF_{|C_k})>\mu(\cB_k)$, and hence~\eqref{rovescio} is a desemistabilizing squence of 
$\cF_{|C_k}$. In fact~\eqref{rovescio}  is the Harder-Narasimhan filtration of $\cF_{|C_k}$, because $\cB_k$ is stable by choice and  $\cA_k$ is stable by the exact sequence in~\eqref{dritto}, stability of $\cF_{|C_k}$, and Equation~\eqref{mascomdiv}. The conclusion is that $C_1,\ldots,C_n$ are the elliptic fibers with the property that the restriction of $\cE$ is not stable, and 
for each  $k\in\{1,\ldots,n\}$ the vector bundle $\cB_k$ is determined as the (unique) destabilizing locally-free quotient of 
$\cE_{|C_k}$ (and hence its determinant is uniquely determined). Hence  a generic point of the image of  $T^{[n]}$ comes form a unique point in $T^{[n]}$. Since $T^{[n]}$ is irreducible of dimension $2n$, the closure of the image of the map in~\eqref{aladin} is an irreducible component of $\cM_{\vv}(S,h)$, and hence also a connected component $\cM_{\vv}(S,h)_0$ because  $\cM_{\vv}(S,h)$ is smooth. Since $\cM_{\vv}(S,h)$ carries the Mukai-Tyurini holomorphic sympectic form, it follows that $\cM_{\vv}(S,h)_0$ is a HK variety birational to $T^{[n]}$, and hence also a deformation of $T^{[n]}$ by Huybrechts' fundamental result~\cite{pippo}.

In order to finish the proof of Theorem~\ref{vbk3ell} we must show that $\cM_{\vv}(S,h)_0=\cM_{\vv}(S,h)$. By the discussion above it suffices to prove that if $[\cE]\in\cM_{\vv}(S,h)$ is generic then the following hold:
\begin{enumerate}
\item
$\cE$ is locally-free,
\item
the number of elliptic fibers with the property that the restriction of $\cE$ is not stable is $n$, 
\item
and, letting $C_1,\ldots,C_n$ be the elliptic fibers of Item~(2), 
 the Harder Narasimhan filtration of $\cE_{|C_k}$ (for  $k\in\{1,\ldots,n\}$) is as in~\eqref{rovescio}, where $r_0:=r(\cB_k)$ and $d_0:=\deg(\cB_k)$ satisfy Equation~\eqref{mascomdiv}.
\end{enumerate}
The statements above follow from dimension counts. In order to prove that Item~(1) holds (of course this is where the hypothesis $r\ge 2$ is necessary), assume the contrary. Then there exists an open non empty subset of $\cM_{\vv}(S,h)$ parametrizing sheaves $\cE$ such that $\cE^{\vee\vee}/\cE$ has a fixed length $d>0$. Counting parameters and invoking the main result of~\cite{ellehn-irr-quot}, which gives $d(r+1)$ as the dimension of the parameter space for quotients of $\cE^{\vee\vee}$ of length $d$, we get that
\begin{multline*}
2n=\dim\cM_{\vv}(S,h)=v(\cE^{\vee\vee})^2+2+d(r+1)= \\ 
=\la(r,\ell,s+d),(r,\ell,s+d)\ra+2+d(r+1)=2n-(r-1)d.
\end{multline*}
This is a contradiction because $r\ge 2$ (and $d>0$). In order to prove that Items~(2) and~(3) hold, one proceeds by induction on $n$. The case $n=0$ holds by Subsection~\ref{subsec:renatozero}. Let $n>0$ and let $[\cE]$ be generic in an irreducible component of $\cM_{\vv}(S,h)$. It suffices  to show that if $C$ is an elliptic fiber such that $\cE_{|C}$ is not stable, then its Harder-Narasimhan filtration is as in Item~(3) because then also Item~(2) will follow. We assume that this is not the case, and we get a contradiction by counting parameters - see Step 4.

\section{Moduli of  sheaves on $K3$ surfaces}
\subsection{The result}
\setcounter{equation}{0}
Building  on Theorem~\ref{vbk3ell} we prove  Theorem~\ref{modonk3} under the hypothesis that  $r+\ell$  is indivisible. We spell out the result   in the language of Mukai vectors.
\begin{thm}[O'Grady~\cite{ogwt2}]\label{casoprim}
Let $S$ be a projective $K3$ surface. Let $\vv=(r,\ell,s)\in H(S;\ZZ)$.
Suppose that $\vv^2\ge -2$ and that $r+\ell$ is primitive.  If $h$ is an $a(\vv)$-generic polarization of $S$ then  $\cM_{\vv}(S,h)$ is non empty, irreducible, smooth of dimension 
$\vv^2+2$. If $n(\vv)>0$ (see~\eqref{enea}) then  $\cM_{\vv}(S,h)$  is a HK variety of Type $K3^{[n(\vv)]}$.
\end{thm}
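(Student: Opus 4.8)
The plan is to deduce Theorem~\ref{casoprim} from the elliptic case Theorem~\ref{vbk3ell} by a deformation argument: I would connect the given data $(S,h,\vv)$, through a family of polarized $K3$ surfaces carrying a flat $(1,1)$ Mukai vector, to an elliptic $K3$ surface of Picard number $2$ satisfying the hypotheses of Theorem~\ref{vbk3ell}, and then transport along the family the five properties \emph{non-emptiness}, \emph{smoothness}, \emph{dimension}, \emph{irreducibility}, and \emph{deformation type}.

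First I would record the local input from the wall-and-chamber picture. Because $h$ is $a(\vv)$-generic and $r+\ell$ is primitive, every $h$ GM semistable sheaf $\cF$ with $v(\cF)=\vv$ is in fact $h$ slope-stable: by Proposition~\ref{campol} a strictly semistable such sheaf would carry a subsheaf $\cE$ with $\lambda_{\cE,\cF}=0$, i.e.\ $r\,c_1(\cE)=r(\cE)\ell$, and the primitivity of $r+\ell$ (equivalently $\gcd(r,\operatorname{div}\ell)=1$) forces $r\mid r(\cE)$, contradicting $0<r(\cE)<r$. Hence $\cM_{\vv}(S,h)$ is the moduli space of stable sheaves; by Theorem~\ref{dimemme} it is smooth of dimension $\vv^2+2$ wherever non-empty, and by Gieseker--Maruyama it is projective.

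Next I would pass to the relative picture over a connected base $B$ parametrizing a family $\cS\to B$ of polarized $K3$'s $(\cS_t,h_t)$ whose periods lie in the locus where $\vv$ stays of Hodge type $(1,1)$ and $h_t$ stays $a(\vv)$-generic; form the relative moduli space $\phi\colon\cM\to B$. I claim $\phi$ is smooth and proper of relative dimension $\vv^2+2$ over the locus where fibres are non-empty: properness is Gieseker--Maruyama in families, while smoothness is the unobstructedness of sheaves along the family, the obstruction to deforming $[\cF]$ in a direction $\kappa\in T_tB$ living in $\Ext^2(\cF,\cF)\cong\CC$ and vanishing \emph{exactly} because $\kappa$ keeps $v(\cF)=\vv$ of type $(1,1)$. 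Granting this, smoothness and the dimension statement hold on every fibre and the fibres are deformation equivalent, so the HK type (when $n(\vv)>0$) is constant and need only be identified once. Non-emptiness then propagates: the image of the proper map $\phi$ is closed while its smooth locus is open, so over connected $B$ one non-empty fibre forces all of them non-empty. Irreducibility follows from a component count, since for the smooth proper $\phi$ the number of connected components of the fibres is locally constant (the rank of the locally free sheaf $\phi_{*}\cO_{\cM}$), and a smooth connected fibre is irreducible. This relative step is where I expect the \emph{main obstacle}: the unobstructedness computation identifying the obstruction with the failure of $\vv$ to remain $(1,1)$, and the genuine propagation of non-emptiness and irreducibility, is precisely the place where the triviality of $K_S$ and the Artamkin--Mukai control are indispensable.

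Finally I would construct the family reaching the elliptic case. Using surjectivity of the period map and connectedness of the period domain $\{[\sigma]\mid \la\sigma,\sigma\ra=0,\ \la\sigma,\ov\sigma\ra>0,\ \la\sigma,v_0\ra=0\}$ attached to a primitive lattice vector $v_0$ of square $\vv^2$ and positive rank $r$, I would join the period of $(S,\vv)$ to that of an elliptic $K3$ surface $S_0$ with $\rho(S_0)=2$, $\NS(S_0)=\la\ell,f\ra$ and $f^2=0$, keeping $h$ ample and $a(\vv)$-generic along the path (possible since the walls are locally finite). The one arithmetic point to arrange is $\gcd(r,q_{S_0}(f,\ell))=1$, so that Theorem~\ref{vbk3ell} applies; this is exactly where primitivity of $r+\ell$ buys enough freedom in the rank-$2$ lattice to pick $f$ with $\ell\cdot f$ coprime to $r$. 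At $S_0$, Theorem~\ref{vbk3ell} gives that $\cM_{\vv}(S_0,h_0)$ is non-empty, irreducible, smooth of dimension $\vv^2+2$, and of Type $K3^{[n(\vv)]}$ when $n(\vv)>0$; transporting these properties along $B$ by the relative argument above yields Theorem~\ref{casoprim}.
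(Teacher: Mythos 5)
Your first two paragraphs reproduce, in substance, the paper's own argument: the identification of GM semistable with slope-stable sheaves under the two hypotheses is Lemma~\ref{solostab}, and the relative moduli space over a connected base --- proper by Gieseker--Maruyama, smooth because slope stability kills traceless endomorphisms and hence, by Serre duality, traceless $\Ext^2$ --- together with the transport of non-emptiness, smoothness, dimension, irreducibility and deformation type, is exactly Claim~\ref{reliscio} and Subsection~\ref{subsec:relativo}. The genuine gap is in your third paragraph, the construction of the connecting family.

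For the relative moduli problem to be defined the Mukai vector must stay algebraic along the family, and for the family to consist of \emph{polarized} surfaces the class $h$ must stay algebraic as well; note that your period condition $\la\sigma,v_0\ra=0$ only enforces $\sigma\perp\ell$, so as written $h$ does not remain of type $(1,1)$. Once this is repaired, your deformations are confined to the locus where the whole rank-two lattice $\la h,\ell\ra$ stays algebraic. But then an endpoint of Picard number $2$ --- which is what Theorem~\ref{vbk3ell} requires --- must have N\'eron--Severi lattice equal to the saturation of $\la h,\ell\ra$, a lattice completely determined by the initial data $(S,h,\vv)$. That lattice need not contain any isotropic class at all (it does so if and only if its discriminant is minus a perfect square), and even when it does, the possible values of $q_S(f,\ell)$ for $f$ primitive isotropic form a finite set fixed by the lattice: primitivity of $r+\ell$ gives you no freedom whatsoever to ``pick $f$ with $\ell\cdot f$ coprime to $r$''. (There is also the unaddressed question of connectedness of this rank-two Noether--Lefschetz locus, which is not automatic and is needed to transport properties from the elliptic fibre back to $(S,h)$.) The missing idea is the reduction of Subsection~\ref{subsec:multacca}: tensoring by $\cO_S(Nh)$ gives an isomorphism of $\cM_{\vv}(S,h)$ with $\cM_{\vv(N)}(S,h)$, where $\vv(N)=\vv\smile e^{Nh}=(r,\ell+rNh,*)$; for $N\gg 0$ the class $\ell+rNh$ is ample and lies in the same open $a(\vv)$-chamber as $h$, so after renaming the primitive generator one may assume $\vv=(r,xh,s)$ with $\gcd\{r,x\}=1$. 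Only after this twist is $c_1$ proportional to the polarization, so that the relative moduli problem extends over the \emph{whole} irreducible moduli space $\cK_e$ with no Noether--Lefschetz constraint, and only then is one free to choose $d$ arbitrarily large, not divisible by $e$ and coprime to $r$, and to specialize to a very generic point of $\cN_e(d)$, where Proposition~\ref{adatto} and Theorem~\ref{vbk3ell} apply. With that reduction inserted at the start, the rest of your argument goes through.
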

\begin{rmk}\label{rmk:puzzone}
In~\cite{ogwt2} the hypothesis is "$\ell$  primitive". A simple argument (similar to what is done in Subsection~\ref{subsec:multacca})  shows that the special case treated in~\cite{ogwt2} implies the validity of the result under the hypothesis that $r+\ell$ is primitive. 
\end{rmk}
If $r=1$ then Theorem~\ref{casoprim} is trivially true because $\cM_{\vv}(S,h)$ \emph{is} isomorphic to $S^{[n(\vv)]}$. For this reason we assume  throughout the present section that $r\ge 2$.
\subsection{We may assume that $\ell$ is a multiple of $h$.}\label{subsec:multacca}
\setcounter{equation}{0}
We prove that it suffices to prove Theorem~\ref{casoprim} under the hypothesis that 
\begin{equation}\label{ellacca}
\vv=(r,xh,s),\quad \gcd\{r,x\}=1.
\end{equation}
First we prove a key consequence of Proposition~\ref{campol} (see also Subsection~\ref{subsec:algardi}). 
\begin{lmm}\label{solostab}
Let $S$ be a projective $K3$ surface. Let $\vv=(r,\ell,s)\in H(S;\ZZ)$, and 
suppose that $r+\ell$ is primitive.  If  $h$ is an $a(\vv)$-generic polarization of $S$ then every sheaf parametrized by  $\cM_{\vv}(S,h)$ is slope stable.
\end{lmm}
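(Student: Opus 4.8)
The plan is to deduce Lemma~\ref{solostab} from Proposition~\ref{campol}(1), exactly as was done in the special case treated in Subsection~\ref{subsec:algardi}, but now for an arbitrary $a(\vv)$-generic polarization rather than an $a(\vv)$-suitable one on a Picard number $2$ surface. First I would recall that $a(\vv)$-genericity means $h$ lies outside the union of the $\vv$-walls, hence in an open $a(\vv)$-chamber (here $a(\vv)=a(\cF)$ for any $\cF$ with $v(\cF)=\vv$, by the identification in Example~\ref{aeffek3} and~\eqref{extuno}). So Proposition~\ref{campol} applies with its modular hypothesis automatically satisfied, since every torsion-free sheaf on a $K3$ is modular by Example~\ref{modsuk3}.

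The argument proceeds by contradiction. Suppose $[\cF]\in\cM_{\vv}(S,h)$ but $\cF$ is not $h$ slope-stable. Since $\cF$ is $h$ GM semistable, it is in particular $h$ slope-semistable, hence strictly $h$ slope-semistable. Because $h$ belongs to an open $a(\cF)$-chamber, Proposition~\ref{campol}(1) furnishes an exact sequence $0\to\cE\to\cF\to\cG\to 0$ of nonzero torsion-free sheaves with $\lambda_{\cE,\cF}=r c_1(\cE)-r(\cE)\ell=0$ in $H^2(S;\ZZ)$, where I write $r=r(\cF)$ and $\ell=c_1(\cF)$. Setting $r'=r(\cE)$ with $0<r'<r$, this reads $r\,c_1(\cE)=r'\,\ell$.

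The remaining step is to show that $r\,c_1(\cE)=r'\,\ell$ with $0<r'<r$ contradicts the primitivity of $r+\ell$. Let me explain why $r+\ell$ primitive forces $\gcd(r,\ell)=1$ in the appropriate sense. Writing $\ell=m\ell_0$ with $\ell_0$ primitive in $\NS(S)$ and $m\ge 0$ its divisibility, the class $r+\ell=r+m\ell_0\in H^0\oplus H^2$ is primitive if and only if $\gcd(r,m)=1$ (this uses that $\ell_0$ is primitive, so no common integer factor can be extracted unless it divides both $r$ and $m$). From $r\,c_1(\cE)=r'\,m\,\ell_0$ and primitivity of $\ell_0$, the class $c_1(\cE)$ must be an integer multiple of $\ell_0$, say $c_1(\cE)=k\ell_0$, giving $rk=r'm$. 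Then $r\mid r'm$, and since $\gcd(r,m)=1$ we get $r\mid r'$, contradicting $0<r'<r$.

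The main obstacle, and the only place requiring genuine care, is the passage from the vanishing $\lambda_{\cE,\cF}=0$ to the contradiction with primitivity of $r+\ell$: one must correctly translate ``$r+\ell$ primitive in the Mukai lattice'' into the coprimality $\gcd(r,m)=1$ of the rank with the divisibility of $\ell$, and then exploit primitivity of $\ell_0$ to conclude $\ell_0\mid c_1(\cE)$. Everything else is a direct invocation of Proposition~\ref{campol}(1) together with the elementary fact that GM semistability implies slope-semistability, as recorded in Subsection~\ref{subsec:ginevra}. Note that this is precisely the mechanism already used in Subsection~\ref{subsec:algardi}, where coprimality of $r$ and $q_S(f,\ell)$ played the role here played by coprimality of $r$ with the divisibility of $\ell$; the reduction of Subsection~\ref{subsec:multacca} will subsequently allow one to assume $\ell=xh$ with $\gcd(r,x)=1$, at which point intersecting $\lambda_{\cE,\cF}=0$ with $h$ recovers the cleaner numerical contradiction.
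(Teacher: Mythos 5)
Your proof is correct and follows essentially the same route as the paper's: contradiction, GM semistability forcing strict slope-semistability, Proposition~\ref{campol}(1) producing $r\,c_1(\cE)=r(\cE)\ell$, and a contradiction with primitivity of $r+\ell$. The only difference is that you spell out the elementary coprimality argument ($\gcd(r,m)=1$ and $\ell_0$ primitive imply $r\mid r(\cE)$) which the paper compresses into the parenthetical ``i.e.~$r$ is coprime to the divisibility of $\ell$''; this elaboration is accurate.
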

\begin{proof}
Suppose that $[\cF]\in \cM_{\vv}(S,h)$ and $\cF$ is not  $h$ slope-stable. Then $\cF$ is strictly 
$h$ slope-semistable because it is $h$ GM semistable. By Proposition~\ref{campol} it follows that there exists a subsheaf $\cE\subset\cF$ such that $0<r(\cE)<r$ and 
$$r c_1(\cE)=r(\cE)\ell.$$
 This contradicts the hypothesis  that $r+\ell$ is primitive (i.e.~$r$ is coprime to the divisibility of $\ell$).
\end{proof}
Next, given $N\in\NN$ let 
$$\vv(N):=\vv\smile \exp{Nh}=(r,\ell+r N h,*).$$
Tensorization by $\cO_S(NH)$ defines an isomorphism
$$\cM_{\vv}(S,h)\overset{\sim}{\lra} \cM_{\vv(N)}(S,h).$$
Moreover if $N$ is very large then $\ell+r N h$ is ample and it belongs to the same open $a(\vv)$-chamber as $h$ does (notice that $a(\vv)=a(\vv(N))$). Arguing as in Subsection~\ref{subsec:algardi} one shows that every sheaf in $\cM_{\vv(N)}(S,h)$ is $h$ slope stable. By Proposition~\ref{campol} it follows that 
$\cM_{\vv(N)}(S,h)\cong \cM_{\vv(N)}(S,\ell+r N h)$ or, more precisely, 
the two moduli spaces  parametrize the same isomorphism classes of sheaves. Let $\ell+rN=x \ov{h}$ where $x>0$ and $\ov{h}$ is primitive (and ample of course). Since $r+\ell$ is primitive we have $\gcd\{r,x\}=1$. Renaming $\ov{h}$ by $h$ we get that it suffices to prove Theorem~\ref{casoprim} for $\vv$ as in~\eqref{ellacca}.
\subsection{Elliptic Noether-Lefschetz loci}
\setcounter{equation}{0}
We will be use the following  result (the proof is elementary, see Lemma~4.3 in~\cite{ogmodvb}).
\begin{lmm}\label{nocamere}
Let $(\Lambda,q)$ be a non degenerate rank $2$ lattice which represents $0$, and hence $\disc(\Lambda)=-d^2$ where $d$ is a strictly positive integer. Let $\alpha\in \Lambda$ be primitive isotropic, and complete it to a basis $\{\alpha,\beta\}$ such that $q(\beta)\ge 0$. If $\gamma\in\Lambda$ has  negative square (i.e.~$q(\gamma)<0$) then
\begin{equation}\label{menoenne}
 q(\gamma)\le -\frac{2d}{1+q(\beta)}.
\end{equation}
\end{lmm}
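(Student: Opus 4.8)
The plan is to pass to the Gram matrix of the basis $\{\alpha,\beta\}$, coordinatize $\gamma$, remove the harmless sign ambiguities, and then reduce everything to one elementary arithmetic inequality. Write $b:=q(\alpha,\beta)$ and $c:=q(\beta)\ge 0$. Since $\alpha$ is isotropic the Gram matrix of $\{\alpha,\beta\}$ is $\bigl(\begin{smallmatrix}0&b\\b&c\end{smallmatrix}\bigr)$, so $\disc(\Lambda)=-b^2=-d^2$ and hence $|b|=d$. Replacing $\beta$ by $-\beta$ if necessary — this keeps $\{\alpha,\beta\}$ a basis and leaves $c=q(\beta)$ unchanged — I may assume $b=d>0$.

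Next I would coordinatize $\gamma$. Writing $\gamma=m\alpha+n\beta$ with $m,n\in\ZZ$, one gets $q(\gamma)=2mnd+n^2c$. Because $c\ge 0$ gives $n^2c\ge 0$, the hypothesis $q(\gamma)<0$ forces $n\neq 0$ and $mn<0$. Replacing $\gamma$ by $-\gamma$ (which does not alter $q(\gamma)$) I may then assume $n\ge 1$ and $m=-m'$ with $m'\ge 1$, so that $-q(\gamma)=2m'nd-n^2c>0$.

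The heart of the argument is a factorisation that keeps the estimate sharp. Setting $k:=2m'd-nc\in\ZZ$, one has $-q(\gamma)=nk$; since $-q(\gamma)>0$ and $n\ge 1$, necessarily $k\ge 1$. The two facts I would exploit are the exact identity $k+nc=2m'd\ge 2d$ and the inequality $nk(1+c)-(k+nc)=k(n-1)+nc(k-1)\ge 0$, whose nonnegativity is immediate from $n\ge 1$, $k\ge 1$, $c\ge 0$. Combining them yields $-q(\gamma)\,(1+c)=nk(1+c)\ge k+nc\ge 2d$, which is exactly the claimed bound $q(\gamma)\le -\tfrac{2d}{1+c}$.

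I expect the only genuinely delicate point to be packaging the final inequality in a uniform way. The crude estimate $-q(\gamma)\ge 1$ (using integrality) is too lossy when $q(\beta)$ is small, and a case split on the size of $k$ against $2d$ works but is clumsy; the trick that avoids all of this is to write $-q(\gamma)=n\cdot k$ and compare $nk(1+c)$ directly with $k+nc=2m'd$. This both removes the casework and shows the bound is attained (for instance $n=1$, $k=1$, $c=2d-1$ gives $q(\gamma)=-1=-\tfrac{2d}{1+c}$), so no sharper inequality of this shape is available.
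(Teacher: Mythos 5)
Your proof is correct. Note that the paper itself contains no proof of Lemma~\ref{nocamere} to compare against: it only remarks that ``the proof is elementary'' and defers to Lemma~4.3 of~\cite{ogmodvb}. Your argument supplies exactly such an elementary proof, and every step checks out: with $b=q(\alpha,\beta)$, $|b|=d$ follows from $\disc(\Lambda)=-b^2=-d^2$; the sign normalizations $\beta\mapsto-\beta$ and $\gamma\mapsto-\gamma$ are harmless because the bound depends only on $d$ and $q(\beta)$; and the factorization $-q(\gamma)=nk$ with $k:=2m'd-nc\ge 1$, combined with the identity $k+nc=2m'd\ge 2d$ and the inequality $nk(1+c)-(k+nc)=k(n-1)+nc(k-1)\ge 0$, yields $-q(\gamma)(1+c)\ge 2d$ directly, with no case analysis. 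Your closing observation that equality is attained (e.g.\ $n=k=m'=1$, $c=2d-1$, giving $q(\gamma)=-1=-\tfrac{2d}{1+q(\beta)}$) is a nice bonus showing the constant in~\eqref{menoenne} is sharp for general (not necessarily even) lattices.
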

Let $\cK_e$ be the moduli space of polarized $K3$ surfaces of degree $e$.  
\begin{dfn}\label{luogoell}
For $d>0$ let $\cN_e(d)\subset \cK_e$ be the set of $[(S,h)]$ such that $\NS(S)$ contains a class $f$ with the property that 
 $\la h,f\ra$ is saturated in $\NS(S)$ and
\begin{equation}\label{prododi}
q_{S}(h,f)=d,\quad q_{S}(f,f)=0. 
\end{equation}
\end{dfn}
The subset $\cN_e(d)$ is a Noether-Lefschetz locus and hence it is closed of pure codimension $1$. 
\begin{prp}\label{unafibrazione}
Suppose  that 
 \begin{equation}\label{chiarello}
d>(e+1),\quad e\notdivides d.
\end{equation}
If  $[(S,h)]\in\cN_e(d)$ is  generic  there is one and only one elliptic fibration 
\begin{equation}\label{congiunti}
\pi\colon S\to\PP^1
\end{equation}
such that, letting $f:=\pi_t^{*}(c_1(\cO_{\PP^1}(1)))$, the lattice $\la h,f\ra$ is saturated  in $\NS(S)$ and~\eqref{prododi} holds. 
\end{prp}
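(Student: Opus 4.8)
The plan is to translate the statement into a computation in the rank-two lattice $\NS(S)$ of a very general member. For a generic $[(S,h)]\in\cN_e(d)$ the Picard number is $2$, and the saturation clause in the definition of $\cN_e(d)$ forces $\NS(S)=\la h,f\ra$ with $q_S(h)=e$, $q_S(h,f)=d$, $q_S(f)=0$, so $\disc(\NS(S))=-d^2$. On a $K3$ surface an elliptic fibration $S\to\PP^1$ is the same datum as its fibre class, namely a primitive nef class $g$ with $q_S(g)=0$, whose linear system is then automatically a free pencil. Hence the proposition is equivalent to the lattice assertion that $f$ is nef, and that it is the only primitive isotropic class $g\in\NS(S)$ with $q_S(h,g)=d$ and $\la h,g\ra$ saturated.

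First I would eliminate $(-2)$-curves. Applying Lemma~\ref{nocamere} to $\Lambda=\NS(S)$ with $\alpha=f$ and $\beta=h$ (legitimate since $q_S(h)=e\ge 0$), every class $\gamma$ of negative square satisfies $q_S(\gamma)\le -\tfrac{2d}{1+e}$, and the assumption $d>e+1$ makes the right-hand side strictly less than $-2$. Thus no class of square $-2$ exists, $S$ carries no $(-2)$-curve, and the nef cone is exactly the closure of the positive-cone component containing $h$. Since $f$ is primitive, isotropic and satisfies $q_S(h,f)=d>0$, it lies in that closure and is therefore nef; by Riemann--Roch $f$ is effective, and a primitive nef isotropic class on a $K3$ moves in a base-point free pencil defining an elliptic fibration $S\to\PP^1$. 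This settles existence.

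For uniqueness I would solve the defining equations directly. Writing $g=ah+bf$ with $a,b\in\ZZ$, the conditions read $q_S(g)=a^2e+2abd=0$ and $q_S(h,g)=ae+bd=d$. If $a=0$ they give $b=1$, i.e.\ $g=f$. If $a\neq 0$ then $ae+2bd=0$, and subtracting $ae+bd=d$ yields $b=-1$ and $a=2d/e$. Thus a second admissible class exists exactly when $e\mid 2d$, in which case it is $\tfrac{2d}{e}h-f$ (automatically primitive, with $\la h,g\ra$ saturated). The numerical hypothesis relating $d$ and $e$ is designed precisely so that $e$ does not divide $2d$; this forces $a=2d/e\notin\ZZ$, leaving $g=f$ as the unique admissible fibre class, and hence the fibration unique.

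The main obstacle is this uniqueness step: the rank-two lattice has two isotropic rays, and one must prevent the second ray from producing a competing fibration meeting all the numerical constraints. This is exactly where the divisibility assumption enters, since it is what makes the would-be second fibre class $\tfrac{2d}{e}h-f$ fail to be integral. The inequality $d>e+1$ plays the complementary supporting role: by killing all $(-2)$-curves it enlarges the nef cone to the whole positive cone, which is what lets me conclude simultaneously that $f$ is nef and that there are no further chamber constraints to analyse.
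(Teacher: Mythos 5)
Your route is the same as the paper's: use Lemma~\ref{nocamere} together with $d>e+1$ to kill all $(-2)$-classes, deduce that the nef cone of the very general member of $\cN_e(d)$ is the full closure of the positive cone so that $f$ is nef and moves in an elliptic pencil, and then reduce uniqueness to a computation with the two isotropic rays of the rank-two lattice $\NS(S)=\la h,f\ra$. Your existence argument is fine, and your uniqueness computation is carried out \emph{more} carefully than in the paper: you correctly find that a second admissible fibre class exists precisely when $e\mid 2d$, namely $\frac{2d}{e}h-f$. (The paper instead normalizes the second isotropic ray as $\frac{1}{\gcd\{d,e\}}(2dh-ef)$, which is in general not primitive; the correct normalizing factor is $\gcd\{2d,e\}$.)

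The gap is in your final step, where you assert that the numerical hypothesis ``is designed precisely so that $e$ does not divide $2d$''. The hypothesis of the Proposition is $e\notdivides d$, and since $e=q_S(h)$ is even for a polarized $K3$, the condition $e\notdivides d$ does \emph{not} imply $e\notdivides 2d$. Concretely, take $e=2$, $d=5$: then $d>e+1$ and $e\notdivides d$, but for the very general $[(S,h)]\in\cN_2(5)$ the class $\alpha:=5h-f$ is integral, primitive, isotropic and nef, it satisfies $q_S(h,\alpha)=5=d$, and $\la h,\alpha\ra=\la h,f\ra=\NS(S)$ is saturated; so $\alpha$ defines a second elliptic fibration meeting every requirement of the statement, and uniqueness fails. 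In other words, your computation, pushed to its conclusion, shows that the Proposition as printed is false and that the hypothesis must be strengthened to $e\notdivides 2d$ --- exactly the hypothesis appearing in its four-dimensional analogue, Proposition~\ref{unicafibr}. The paper's own proof reaches the stated conclusion only because of the primitivization slip noted above: with $\gcd\{d,e\}$ in place of $\gcd\{2d,e\}$, the inequality $q_S(\alpha,h)\neq d$ appears to follow from $e\notdivides d$. So you should either record the corrected hypothesis $e\notdivides 2d$ explicitly (after which your argument is complete), or accept that your proof --- like the paper's --- does not establish the statement as written.
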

\begin{proof}
Let $S$ be a $K3$ surface   such that 
\begin{equation}
\NS(S)=\la h,f\ra,\quad q_{S}(h,f)=d,\quad q_{S}(f,f)=0. 
\end{equation}
There are no  $\xi\in \NS(S)$ such that $q_{S}(\xi)=-2$ by  the inequality in~\eqref{chiarello} and Lemma~\ref{nocamere} - hence there are no smooth rational curves on $S$.
It follows that the  ample cone of $S$ is equal to the intersection of $\NS(S)$ and the positive cone. Hence, changing sign to $h$ and $f$ if necessary, we may assume that $h$  is ample. 

A straightforward computation shows  that there are exactly two primitive nef isotropic classes, namely  $f$ and $\alpha:=\frac{1}{\gcd\{d,e\}}(2d h-e f)$. 
  By our \lq\lq non divisibility\rq\rq\ hypothesis in~\eqref{chiarello}, we get that  $q_S(\alpha,h)=\frac{de}{\gcd\{d,e\}}$ is not equal to $d$. Hence $f$ is the unique primitive nef isotropic  class  such that $q_S(h, f)=d$. Let $F$ be a divisor represented by $f$. The invertible sheaf  $\cO_S(F)$ is globally generated  because there are no smooth rational curve on $S$, and since $q_S(f,f)=0$, the map $S\lra |F|^{\vee}$ is an elliptic fibration.

A polarized $K3$ surface $(S,h)$ as above is represented by a point of $\cN_e(d)$, and the set of points of $\cN_e(d)$ representing such surfaces is dense in $\cN_e(d)$. The proposition follows.
\end{proof}
The result below allows us to prove Theorem~\ref{casoprim} by appealing to Theorem~\ref{vbk3ell}.
\begin{prp}\label{adatto}
Suppose  that 
 \begin{equation}\label{chiarello}
d>\frac{(e+1)}{8} r^2(\vv^2+2r^2),\quad e\notdivides d.
\end{equation}
Let  $[(S,h)]\in\cN_e(d)$ be \emph{very} generic, and hence we may assume that the thesis of Proposition~\ref{unafibrazione} holds (recall that $r\ge 2$ and $\vv^2\ge -2$) and $\NS(S)=\la h,f\ra$.   Then $h$ is $a(\vv)$-generic with respect to the elliptic fibration in~\eqref{congiunti} (recall that $\vv=(r,xh,s)$).  
\end{prp}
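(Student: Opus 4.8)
The plan is to show that, under hypothesis~\eqref{chiarello}, there are simply \emph{no} $a(\vv)$-walls on $S$ at all; once this is established, the conclusion that $h$ is $a(\vv)$-generic (and indeed $a(\vv)$-suitable with respect to the fibration~\eqref{congiunti}) is automatic, since $h$ then lies outside the empty union of walls and the defining condition of Definition~\ref{suipol} becomes vacuous.

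First I would record the lattice data. For $[(S,h)]$ very generic in $\cN_e(d)$ we have $H^{1,1}_{\ZZ}(S)=\NS(S)=\la h,f\ra$, a non-degenerate rank $2$ lattice in which $q_S(h,h)=e$, $q_S(h,f)=d$ and $q_S(f,f)=0$; thus $\disc(\NS(S))=-d^2$ and the lattice represents $0$, with $f$ primitive isotropic. I would also note that $a(\vv)>0$, so that the notion of $a(\vv)$-wall is well posed: by Example~\ref{aeffek3} and~\eqref{extuno} one has $c_S=1$ and $d(\cF)=\Delta(\cF)=\vv^2+2r^2$, whence $a(\vv)=\tfrac{r^2(\vv^2+2r^2)}{4}$ as in~\eqref{enea}, and the standing hypotheses $\vv^2\ge -2$, $r\ge 2$ give $\vv^2+2r^2\ge 6>0$.

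The key step is to feed this lattice into Lemma~\ref{nocamere}, taking $\alpha=f$ and $\beta=h$ (so that $q_S(\beta)=e\ge 0$). The lemma then asserts that every $\gamma\in\NS(S)$ with $q_S(\gamma)<0$ satisfies
\begin{equation*}
q_S(\gamma)\le -\frac{2d}{1+e}.
\end{equation*}
On the other hand, multiplying the first inequality in~\eqref{chiarello} by $\tfrac{2}{e+1}>0$ rewrites it exactly as $\tfrac{2d}{1+e}>\tfrac{r^2(\vv^2+2r^2)}{4}=a(\vv)$. Combining the two, every class of negative square has square strictly below $-a(\vv)$; equivalently there is no $\gamma\in H^{1,1}_{\ZZ}(S)$ with $-a(\vv)\le q_S(\gamma)<0$. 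Hence no $a(\vv)$-wall exists, and the proposition follows.

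I do not expect a genuine obstacle: the whole content is the observation that hypothesis~\eqref{chiarello} has been calibrated precisely so that the universal lower bound $\tfrac{2d}{1+e}$ on the size of negative squares, supplied by Lemma~\ref{nocamere} for a hyperbolic plane-like rank $2$ lattice representing $0$, strictly exceeds the wall-threshold $a(\vv)$. The only points demanding care are the correct identification of $a(\vv)$ on the $K3$ surface $S$ and the purely arithmetic rewriting of~\eqref{chiarello}; both are routine, and the appeal to very genericity is used solely to guarantee $\NS(S)=\la h,f\ra$.
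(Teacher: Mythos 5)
Your proposal is correct and follows essentially the same route as the paper, whose (one-line) proof is precisely ``Using Lemma~\ref{nocamere} one checks that there is a single $a(\vv)$-chamber.'' Your write-up just makes explicit the details the paper leaves to the reader: the identification $a(\vv)=\tfrac{r^2(\vv^2+2r^2)}{4}$ via Example~\ref{aeffek3} and~\eqref{extuno}, the application of Lemma~\ref{nocamere} with $\alpha=f$, $\beta=h$, and the arithmetic equivalence of~\eqref{chiarello} with $\tfrac{2d}{e+1}>a(\vv)$, which rules out all classes of negative square in the wall range and so makes both genericity and suitability of $h$ vacuous.
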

\begin{proof}
Using Lemma~\ref{nocamere} one checks that there is single $a(\vv)$-chamber.
\end{proof}
\subsection{Proof of Theorem~\ref{casoprim}}\label{subsec:relativo}
\setcounter{equation}{0}
By Subsection~\ref{subsec:multacca} it suffices to prove Theorem~\ref{casoprim} for $\vv$ given by~\eqref{ellacca}. Let $e:=q_S(h, h)$, and let
 $\cX\to T_e$ be a complete family of polarized $K3$ surfaces  of degree $e$. Since $\cK_e$  is irreducible, we may assume that $T_e$ is irreducible. If $t\in T_e$ we let $(S(t),h(t))$ be the polarized $K3$ surface corresponding to $t$, and $\vv(t):=(r,xh(t),s)$. By Maruyama there exists a relative moduli space over $T_e$, i.e.~a map of schemes
$$\rho\colon\cM\lra T_e$$
such that for $t\in T_e$ the fiber $\rho^{-1}(t)$ is isomorphic to $\cM_{\vv(t)}(S(t),h(t))$. Moreover the map $\rho$ is projective, in particular it is proper. 

Let $T^0_e\subset T_e$ be the subset of points $t$ such that  $h(t)$ is $a(\vv(t))$-generic. Then $T^0_e$ is open dense, and by hypothesis there exists $t_0\in T^0_e$ such that $(S({t_0}),h({t_0}))\cong (S,h)$. Let $\cM^0:=\rho^{-1}(T^0_e)$. The restriction of $\rho$ defines a projective map 
$\cM^0\to T^0_e$.
\begin{clm}\label{reliscio}
The map $\cM^0\to T^0_e$ is smooth.
\end{clm}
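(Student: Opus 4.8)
The plan is to verify that $\rho\colon\cM^0\to T^0_e$ is \emph{formally smooth}; since $\rho$ is projective, hence locally of finite presentation, this suffices for smoothness. Thus I would fix a square-zero extension $B'\twoheadrightarrow B$ of local Artinian $\CC$-algebras with kernel $J$ (a $\CC$-vector space, with $\gm_{B'}J=0$), a morphism $g'\colon\Spec B'\to T^0_e$, and a morphism $\phi\colon\Spec B\to\cM^0$ lying over $g'|_{\Spec B}$, and show that $\phi$ extends to a morphism $\phi'\colon\Spec B'\to\cM^0$ over $g'$. Pulling back the universal family along $g'$ gives a $B'$-flat family $\cX_{B'}\to\Spec B'$ of polarized $K3$ surfaces, with relative polarization $\cH$ restricting to $h$ on the central fibre $X_0$; by the moduli interpretation $\phi$ corresponds to a $B$-flat sheaf $\cF_B$ on $\cX_B:=\cX_{B'}\times_{B'}B$ with $\cF_B|_{X_0}\cong\cF$, and extending $\phi$ amounts to extending $\cF_B$ to a $B'$-flat sheaf $\cF_{B'}$ on $\cX_{B'}$.

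By the standard deformation theory of coherent sheaves on a flat family (valid whether or not $\cF$ is locally free), the obstruction to the existence of such an extension $\cF_{B'}$ is a class $\omega\in\Ext^2_{X_0}(\cF,\cF)\otimes_\CC J$. The crux of the argument is to analyse $\omega$ through the trace map $\Tr\colon\Ext^2_{X_0}(\cF,\cF)\to H^2(X_0,\cO_{X_0})$ of \eqref{traccia}. Because $g'$ lands in $T^0_e$, the polarization $h$ is $a(\vv)$-generic, so by Lemma~\ref{solostab} every sheaf parametrized by the relevant fibre---in particular $\cF$---is slope-stable, hence simple; as $X_0$ is a $K3$ surface, Serre duality then shows that $\Tr$ is an \emph{isomorphism} (both sides are one-dimensional and $\Tr$ is nonzero). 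Consequently it is enough to prove that $\Tr(\omega)=0$.

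Here the hypothesis \eqref{ellacca} that $c_1(\cF)=xh$ is a multiple of the polarization does the work. The trace $\Tr(\omega)\in H^2(X_0,\cO_{X_0})\otimes J$ is exactly the obstruction to extending the determinant line bundle $\det\cF_B$ over $\cX_{B'}$, and since $H^1(\cO_{X_0})=0$ this line bundle is determined by $c_1(\det\cF_B)=xh$. But $\cH^{\otimes x}|_{\cX_{B'}}$ already provides a $B'$-flat extension of $\det\cF_B$; equivalently, the obstruction to deforming a line bundle of first Chern class $xh$ along $\cX_{B'}$ is $x\langle\kappa,h\rangle$, where $\kappa$ is the Kodaira--Spencer class of $\cX_{B'}$, and this vanishes because $h$ remains of type $(1,1)$ throughout the \emph{polarized} family over $T^0_e$. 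Hence $\Tr(\omega)=0$, and since $\Tr$ is an isomorphism $\omega=0$. Thus $\cF_{B'}$ exists, $\phi$ lifts, and $\rho$ is formally smooth, proving the claim.

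The main technical point to pin down carefully is the relative obstruction theory invoked at the start of the middle paragraph: that the obstruction to extending a flat family of (not necessarily locally free) sheaves along a square-zero thickening of the base lives in $\Ext^2_{X_0}(\cF,\cF)\otimes J$, and that its trace computes the corresponding obstruction for $\det\cF_B$. This is the standard theory via the (truncated) Atiyah class; once it is in place, the smoothness is forced purely by the two structural facts that $\Tr$ is an isomorphism on a $K3$ and that $c_1(\cF)\in\CC h$ stays algebraic in the polarized family. Note that this argument simultaneously reproves, at the point $t_0$, that the fibre $\cM_{\vv}(S,h)$ is smooth of dimension $\vv^2+2$, in agreement with Theorem~\ref{dimemme}.
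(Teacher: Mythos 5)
Your proof is correct and follows essentially the same route as the paper: $a(\vv)$-genericity of the polarization gives slope stability of every parametrized sheaf (Lemma~\ref{solostab}), hence simplicity, and Serre duality on the $K3$ then controls $\Ext^2(\cF,\cF)$, which is exactly what drives the smoothness. The only difference is that the paper delegates the final deformation-theoretic step to the cited result of~\cite{man-iac-pairs}, whereas you unpack it into an explicit obstruction computation (the trace of the obstruction equals the obstruction to extending $\det\cF_B$, which vanishes because $c_1(\cF)=xh$ stays algebraic in the polarized family); this is a faithful expansion of the citation rather than a genuinely different argument.
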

\begin{proof}
A point of $\cM^0$ represents an $h(t)$ semistable sheaf $\cF$ on $S(t)$, where $t\in T^0_e$. The polarization is $a(\vv(t))$-generic by definition of $T^0_e$, and hence $\cF$ is slope stable. By Serre duality 
$$\Ext^2(\cF,\cF)^0\cong (\Hom(\cF,\cF)^0)^{\vee},$$
where $^0$ means "traceless". Since $\cF$ is (slope) stable it has no  nonzero traceless endomorphisms. By~\cite{man-iac-pairs} it follows that the map $\cM^0\to T^0_e$ is smooth.
\end{proof}
Let $d$ be as in Proposition~\ref{adatto}. Let $t\in T_e$ be such that  $[(S(t),h(t)]$ is very generic point of $\cN_e(d)$. By Proposition~\ref{adatto}  every sheaf parametrized by $\cM_{\vv(t)}(S(t),h(t))$ is slope stable (see Subsection~\ref{subsec:algardi}) and hence $t\in T_e^0$. By Theorem~\ref{vbk3ell} the moduli space $\cM_{\vv(t)}(S(t),h(t))$ satisfies the thesis of Theorem~\ref{casoprim}, and hence so does $\cM_{\vv}(S,h)$ by Claim~\ref{reliscio}.
\qed
\section{Proof of Theorem~\ref{unicita}}
\subsection{Outline}
\setcounter{equation}{0}
The first subsection recalls the definition and some properties of semi homogeneous vector bundle on an abelian variety. Simple semi homogeneous vector bundles on abelian varieties of dimension greater than $2$ are similar to simple (i.e.~stable) vector bundles on elliptic curves in the following sense: they have no non trivial deformations which keep the determinant fixed. Notice that there exist simple vector bundles on abelian varieties of dimension greater than $2$ which have deformation spaces of arbitrarily large dimension (even if we fix the isomorphism class of the determinant).

In the second subsection we establish the connection between semi homogeneous vector bundles and modular sheaves on Lagrangian HK's. The main point is that if a modular sheaf restricts to a stable vector bundle on a generic Lagrangian fiber then the restriction is a semi homogeenous vector bundle. In particular under this hypothesis one gets strong constraints for the rank of a modular sheaf.

Subsection~\ref{subsec:lagdiv} introduces  Lagrangian Noether Lefschetz divisors in moduli spaces of polarized HK's of Type $K3^{[2]}$. 

Subsection~\ref{subsec:evadere} gives a (weak) analogue of the results in  Subsection~\ref{subsec:algardi}.

Our next task is to produce stable modular vector bundles with the prescribed rank, $c_1$ and $c_2$. Subsection~\ref{subsec:fallito} explains why one encounters problems in trying to extend the construction of Subsection~\ref{subsec:innocenzox}.  In Subsection~\ref{subsec:dibase} we give a construction of modular vector bundles with the correct rank, $c_1$ and $c_2$ on a Hilbert square of a $K3$ surface $S$.  In Subsection~\ref{subsec:fossanova} we specialize $S$ to be elliptic so that the Hilbert square is Lagrangian.

In Subsection~\ref{subsec:vblag} we show that there are irreducible components of Lagrangian Noether-Lefschetz whose generic point parametrizes polarized HK's $(X,h)$ carrying a slope stable vector bundle  (with the correct rank, $c_1$ and $c_2$) whose restriction to Lagrangian fibers is slope stable with the possible exception of a finite set of fibers - this reproduces the behaviour of vector bundle on elliptic $K3$ surfaces which have no infinitesimal deformations. 

The last subsection wraps it all up.
\subsection{Semi homogeneous vector bundles on abelian varieties}\label{subsec:semihom}
\setcounter{equation}{0}
  Let $A$ be an abelian  variety, and   let $T_a\colon A\to A$ be the translation by $a\in A$.  
\begin{dfn}
  A vector bundle  $\cF$ on $A$ is  \emph{semi homogeneous} if, for every $a\in A$, there exists an invertible sheaf $\xi$ on $A$ such that $T_a^{*}\cF\cong \cF\otimes\xi$. 
\end{dfn}
\begin{rmk}
If $\cF$ is a semi homogeneous vector bundle on an abelian variety then $\Delta(\cF)=0$. In fact $End(\cF)$ is a homogeneous vector bundle (Theorem 5.8 in~\cite{muksemi}) and hence is topologically trivial  (Theorem 4.17 in~\cite{muksemi}). 
Thus  $\Delta(\cF)=c_2(End\cF)=0$.
\end{rmk}
The following result follows from Theorem 5.8 in~\cite{muksemi} and the Kobayashi-Hitchin correspondence proved by Uhlenbeck-Yau~\cite{uhlyau}, see~\cite{ogmodvb}.
\begin{prp}\label{discperp}
Let $(A,\theta)$ be a polarized abelian variety  of dimension $n$, and let   $\cF$ be a $\theta$ slope-stable vector bundle on $A$.  If
\begin{equation}\label{delper}
\int_A \Delta(\cF)\smile \theta^{n-2}=0
\end{equation}
(the condition is to be understood to be empty if $n=1$) then $\cF$ is simple semi homogeneous. 
\end{prp}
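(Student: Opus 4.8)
The plan is to deduce semi homogeneity from the equality case of the Bogomolov inequality for Hermite--Einstein bundles, and then to invoke Mukai's structural result to pass from projective flatness to the translation-invariance that defines a semi homogeneous bundle. First note that the word \emph{simple} in the conclusion is automatic: a $\theta$ slope-stable bundle has only scalar endomorphisms, so $\dim\Hom(\cF,\cF)=1$. Thus the real content is to produce the semi homogeneity.

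The next step is to put an analytic structure on $\cF$. Since $(A,\theta)$ is projective, hence compact K\"ahler, and $\cF$ is $\theta$ slope-stable, the Uhlenbeck--Yau solution of the Kobayashi--Hitchin correspondence~\cite{uhlyau} equips $\cF$ with a Hermite--Einstein metric, so the associated Chern connection has curvature $F$ satisfying $\Lambda_\theta F=\gamma\cdot\Id_{\cF}$ for a constant $\gamma$. For such a bundle the Bogomolov--L\"ubke inequality gives $\int_A\Delta(\cF)\smile\theta^{n-2}\ge 0$, with equality precisely when the trace-free part of $F$ vanishes identically, i.e.~when $\cF$ is Hermitian projectively flat. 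The hypothesis~\eqref{delper} places us exactly in the equality case, so $\cF$ is projectively flat.

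Finally I would convert projective flatness into semi homogeneity via Theorem~5.8 in~\cite{muksemi}. Projective flatness means the curvature of $\cF$ is a scalar multiple of $\Id_{\cF}$, so the induced connection on $\End(\cF)=\cF^{\vee}\otimes\cF$ is flat and $\Delta(\cF)=0$ in $H^4(A)$. A flat bundle on $A$ is translation invariant, because each translation $T_a$ is homotopic to the identity; hence $T_a^{*}\End(\cF)\cong\End(\cF)$ for every $a\in A$, i.e.~$\End(\cF)$ is homogeneous. Mukai's theorem then yields that the simple bundle $\cF$ is itself semi homogeneous, which completes the argument (for $n=1$ the hypothesis is empty and the statement reduces to Atiyah's classification on an elliptic curve). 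I expect the main obstacle to be the middle step: one must check that the purely cohomological condition~\eqref{delper} forces the \emph{pointwise} vanishing of the trace-free curvature, so that $\cF$ is genuinely projectively flat and not merely of vanishing discriminant in cohomology. This is exactly the content of the equality case of the Bogomolov--L\"ubke inequality, and once it is granted the remaining implications are formal.
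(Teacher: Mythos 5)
Your proposal is correct and follows essentially the same route as the paper: the paper's proof (detailed in~\cite{ogmodvb}) likewise combines the Uhlenbeck--Yau Kobayashi--Hitchin correspondence with the equality case of the L\"ubke (Bogomolov-type) inequality for Hermite--Einstein bundles to obtain projective flatness, and then invokes Theorem~5.8 of~\cite{muksemi} to pass from the homogeneity of $\End(\cF)$ to semi homogeneity of the simple bundle $\cF$. Your identification of the equality case of the L\"ubke inequality as the crux, and the observation that flatness of $\End(\cF)$ plus translations acting trivially on $\pi_1(A)$ yields homogeneity, are exactly the points the paper's reference relies on.
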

As shown by Mukai there are strong restrictions on the rank and $c_1$ 
of a simple semi homogeneous vector bundle.  Below is an extension of  results  
of Mukai (see Theorem 7.11 and Remark 7.13 in~\cite{muksemi})  proved in~\cite{ogmodvb}.  
\begin{prp}[Mukai~\cite{muksemi}]\label{potenza}
Let $(A,\theta)$ be a  polarized abelian variety of dimension $n$. Suppose that the elementary divisors of $\theta$ are $(1,\ldots,1,d_1,d_2)$ where $d_1$ divides $d_2$.  Let $\cF$ be a simple semi homogeneous vector bundle  on $A$ such that $c_1(\cF)=a\theta$. Then there exists a positive integer $r_0$    such that, letting $g_i:=\gcd\{r_0,d_i\}$ we have   
\begin{equation}\label{controllo}
\gcd\{r(\cF),a\}=\frac{r_0^{n-1}}{g_1\cdot g_2},\quad  r(\cF)=\frac{r_0^{n}}{g_1\cdot g_2}.
\end{equation}
\end{prp}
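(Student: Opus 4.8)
The plan is to reduce the entire statement to a single formula for the rank in terms of the slope, and then to read off that formula from Mukai's structure theory by an elementary-divisor computation on the period lattice.

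\emph{Reduction to the slope.} Since $\cF$ is simple semi homogeneous, $\End(\cF)$ is a homogeneous, hence topologically trivial, vector bundle (as recalled just before Proposition~\ref{discperp}), so all Chern classes of $\End(\cF)$ vanish; this forces $\ch(\cF)=r(\cF)\,e^{\delta}$ with $\delta:=c_1(\cF)/r(\cF)=(a/r(\cF))\,\theta\in\NS(A)_{\QQ}$. Put $g:=\gcd\{r(\cF),a\}$ and write $\delta=(a'/r_0)\theta$ with $r_0:=r(\cF)/g$ and $a':=a/g$, so that $\gcd\{a',r_0\}=1$; this is the integer $r_0$ that the statement produces. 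With this choice the second identity in~\eqref{controllo} is a formal consequence of the first: as $g=r(\cF)/r_0$ by definition, the equality $r(\cF)=r_0^{n}/(g_1g_2)$ gives at once $\gcd\{r(\cF),a\}=r(\cF)/r_0=r_0^{n-1}/(g_1g_2)$. Hence everything reduces to the single rank formula $r(\cF)=r_0^{n}/(g_1g_2)$.

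\emph{Rank from slope.} The key input is Mukai's classification of simple semi homogeneous bundles (Theorem~7.11 of~\cite{muksemi}), which rests on the theory of theta groups in the same way that Proposition~\ref{discperp} rests on the Kobayashi--Hitchin correspondence. Writing $A=V/\Lambda$ and letting $E_{\delta}$ be the $\QQ$-valued alternating form on $\Lambda$ given by the imaginary part of the Hermitian form representing $\delta$, one attaches to $\delta$ the finite group $K(\delta):=\Lambda_{\delta}/(\Lambda\cap\Lambda_{\delta})$, where $\Lambda_{\delta}:=\{x\in\Lambda_{\QQ}\mid E_{\delta}(x,\Lambda)\subseteq\ZZ\}$ is the dual lattice of $E_{\delta}$. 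Mukai's theorem realizes a simple semi homogeneous bundle of slope $\delta$ through the \emph{unique} irreducible representation of the associated finite Heisenberg group, whose dimension is $\sqrt{|K(\delta)|}$. Consequently
\[
r(\cF)^{2}=|K(\delta)|=[\Lambda:\Lambda\cap\Lambda_{\delta}],
\]
the last equality holding because the non-degenerate form $E_{\delta}$ puts $\Lambda/(\Lambda\cap\Lambda_{\delta})$ and $\Lambda_{\delta}/(\Lambda\cap\Lambda_{\delta})$ in perfect duality. I expect this step to be the main obstacle: it is where the genuine geometric content sits, and carrying it out honestly means reproducing (or precisely quoting) the Heisenberg-representation part of Mukai's argument, not merely the numerology.

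\emph{Elementary-divisor computation.} It remains to evaluate $[\Lambda:\Lambda\cap\Lambda_{\delta}]$. Choose a symplectic basis $\lambda_{1},\dots,\lambda_{n},\mu_{1},\dots,\mu_{n}$ of $\Lambda$ adapted to $\theta$, so that $E(\lambda_{i},\mu_{j})=d_{j}'$ when $i=j$ and all other pairings of basis vectors vanish, where $(d_{1}',\dots,d_{n}')=(1,\dots,1,d_{1},d_{2})$. Then $E_{\delta}=(a'/r_0)E$, and for $x=\sum_i(p_i\lambda_i+q_i\mu_i)\in\Lambda$ the integrality conditions $E_{\delta}(x,\mu_j),E_{\delta}(x,\lambda_j)\in\ZZ$ read $r_0\mid a'd_j'p_j$ and $r_0\mid a'd_j'q_j$; since $\gcd\{a',r_0\}=1$ these are equivalent to $e_j\mid p_j$ and $e_j\mid q_j$ with $e_j:=r_0/\gcd\{r_0,d_j'\}$. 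Therefore
\[
[\Lambda:\Lambda\cap\Lambda_{\delta}]=\prod_{j=1}^{n}e_j^{2}=\Big(r_0^{\,n-2}\cdot\frac{r_0}{g_1}\cdot\frac{r_0}{g_2}\Big)^{2}=\Big(\frac{r_0^{\,n}}{g_1g_2}\Big)^{2},
\]
using $g_1=\gcd\{r_0,d_1\}$, $g_2=\gcd\{r_0,d_2\}$ and $\gcd\{r_0,1\}=1$. Taking square roots yields $r(\cF)=r_0^{n}/(g_1g_2)$, and the reduction of the first paragraph then delivers both identities in~\eqref{controllo}.
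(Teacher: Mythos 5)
The paper itself gives no proof of Proposition~\ref{potenza} (it quotes Theorem~7.11 and Remark~7.13 of~\cite{muksemi} and refers to~\cite{ogmodvb} for the extension), so your attempt must be measured against Mukai's theory, which is indeed what you invoke. Two of your three steps are fine: with $r_0:=r(\cF)/\gcd\{r(\cF),a\}$ the reduced denominator of the slope, the first identity in~\eqref{controllo} is a formal consequence of the second, and your elementary-divisor computation $[\Lambda:\Lambda\cap\Lambda_\delta]=\bigl(r_0^n/(g_1g_2)\bigr)^2$ is correct. The gap sits exactly at the step you flagged as the main obstacle, and it is a genuine error rather than a missing citation: \emph{both} equalities in $r(\cF)^2=|K(\delta)|=[\Lambda:\Lambda\cap\Lambda_\delta]$ are false for your group $K(\delta):=\Lambda_\delta/(\Lambda\cap\Lambda_\delta)$. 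Take $A$ principally polarized and $\delta=\theta/2$ (so $a'=1$, $r_0=2$). Since $E$ is unimodular on $\Lambda$, one gets $\Lambda_\delta=\{x\in\Lambda_\QQ \mid E(x,\Lambda)\subseteq 2\ZZ\}=2\Lambda$, hence $K(\delta)=2\Lambda/2\Lambda=0$ and your formula would give $r(\cF)=1$, which is absurd because a rank one semi homogeneous bundle is a line bundle while $\theta/2$ is not integral; meanwhile $\Lambda/(\Lambda\cap\Lambda_\delta)=\Lambda/2\Lambda$ has order $2^{2n}$, which \emph{is} the correct value of $r(\cF)^2$. In general $[\Lambda_\delta:\Lambda\cap\Lambda_\delta]=\prod_j\bigl(a'd_j'/\gcd\{r_0,d_j'\}\bigr)^2$ depends on the numerator $a'$ of the slope, which cannot enter the rank (already on an elliptic curve a stable bundle of slope $a'/r_0$ has rank $r_0$, not $a'$); and since a perfect duality between finite abelian groups forces equal orders, the duality you assert between $\Lambda/(\Lambda\cap\Lambda_\delta)$ and $\Lambda_\delta/(\Lambda\cap\Lambda_\delta)$ cannot hold.

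What your argument actually needs is the single statement $r(\cF)^2=[\Lambda:\Lambda\cap\Lambda_\delta]$, i.e.\ that the finite symplectic group attached to $\cF$ is $\Lambda/(\Lambda\cap\Lambda_\delta)$ (this quotient does carry a nondegenerate $\QQ/\ZZ$-valued pairing induced by $E_\delta$, hence has square order), and establishing this is precisely the geometric content of Mukai's theorem, which your write-up never reproduces. A correct route along your lines: by Mukai's structure theory, every simple semi homogeneous bundle of slope $\delta$ is, up to a $\Pic^0(A)$-twist, of the form $\pi_{*}L$, where $\pi\colon V/\Lambda'\to A$ is an isogeny with $\Lambda'\subseteq\Lambda$ such that $E_\delta$ is integral on $\Lambda'$, and $L$ is a line bundle with class $\pi^{*}\delta$; then $r(\pi_{*}L)=[\Lambda:\Lambda']$, and one checks that $\pi_{*}L$ is simple if and only if $\{x\in\Lambda \mid E_\delta(x,\Lambda')\subseteq\ZZ\}=\Lambda'$. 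These two conditions say exactly that $H:=\Lambda'/(\Lambda\cap\Lambda_\delta)$ is a subgroup of $G:=\Lambda/(\Lambda\cap\Lambda_\delta)$ with $H=H^{\perp}$, whence $|H|^2=|G|$ and $r(\cF)=[G:H]=|G|^{1/2}$; combined with your (correct) computation of $|G|$ this gives $r(\cF)=r_0^n/(g_1g_2)$, consistently with Proposition~\ref{quadrato}. A final, inessential point: vanishing of the Chern classes of $\End(\cF)$ does not by itself force $\ch(\cF)=r(\cF)e^{\delta}$ (the odd-degree components are unconstrained by that vanishing), but this is harmless since your argument only ever uses the slope.
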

\begin{prp}[Mukai, Proposition 7.1 in~\cite{muksemi}]\label{quadrato}
Let $\cF$ be a simple semi homogeneous vector bundle on an abelian variety. Then the set of line bundle $\xi$ such that
$ \cF\cong \cF\otimes\xi$ has cardinality $r(\cF)^2$. 
\end{prp}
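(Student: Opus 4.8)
The plan is to identify the set
$\Sigma(\cF):=\{\xi\mid \cF\otimes\xi\cong\cF\}$
with the support of a decomposition of the endomorphism bundle $\cF^{\vee}\otimes\cF$ into degree-zero line bundles, and then to read off its cardinality as the rank $r(\cF)^2$ of that bundle. First I would record two elementary reductions. Since $c_1(\cF\otimes\xi)=c_1(\cF)+r(\cF)c_1(\xi)$ and $H^2(A;\ZZ)$ is torsion free, an isomorphism $\cF\otimes\xi\cong\cF$ forces $c_1(\xi)=0$, so every such $\xi$ lies in $\Pic^0(A)=\wh A$; moreover $\Sigma(\cF)$ is visibly a subgroup of $\wh A$. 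Thus it suffices to compute the order of $\Sigma(\cF)$ as a subgroup of $\wh A$.

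The heart of the argument is the structure of the sheaf of endomorphisms $\cF^{\vee}\otimes\cF$. As recalled in the remark preceding Proposition~\ref{discperp}, this bundle is homogeneous and topologically trivial; in particular its discriminant and $c_1$ vanish. Because $\cF$ is simple and semi homogeneous it is slope-stable (Mukai's semistability of semi homogeneous bundles together with simplicity), so $\cF^{\vee}\otimes\cF$ is slope-polystable of slope $0$. By the Kobayashi--Hitchin correspondence of Uhlenbeck--Yau~\cite{uhlyau}, a polystable bundle of slope $0$ with vanishing discriminant and $c_1=0$ carries a flat unitary structure; since $\pi_1(A)$ is abelian its irreducible unitary local systems are one-dimensional, and the rank-one summands that arise are holomorphic line bundles of degree zero, i.e.\ elements of $\wh A$. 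Hence I obtain a splitting
\begin{equation*}
\cF^{\vee}\otimes\cF\;\cong\;\bigoplus_{i=1}^{r(\cF)^2}\xi_i,\qquad \xi_i\in\wh A .
\end{equation*}

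With this decomposition the count becomes formal. For $\eta\in\wh A$ one has $\Hom(\cF,\cF\otimes\eta)=H^0\big(\cF^{\vee}\otimes\cF\otimes\eta\big)$, whose dimension equals the number of indices $i$ with $\xi_i=\eta^{-1}$. Since $\cF$ and $\cF\otimes\eta$ are stable of the same slope, any nonzero homomorphism between them is an isomorphism; therefore this $\Hom$ is nonzero precisely when $\eta\in\Sigma(\cF)$, which shows that the multiset $\{\xi_i\}$ is exactly $\{\eta^{-1}\mid\eta\in\Sigma(\cF)\}=\Sigma(\cF)$. Conversely, for $\eta\in\Sigma(\cF)$, composition with a fixed isomorphism $\cF\otimes\eta\cong\cF$ identifies $\Hom(\cF,\cF\otimes\eta)$ with $\Hom(\cF,\cF)=\CC$, so each $\xi_i$ occurs with multiplicity one. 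Comparing ranks in the displayed splitting then yields $r(\cF)^2=\sum_{\xi\in\Sigma(\cF)}1=|\Sigma(\cF)|$, as asserted.

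The step I expect to be the main obstacle is the passage from \emph{homogeneous and topologically trivial} to the genuine semisimple splitting $\cF^{\vee}\otimes\cF\cong\bigoplus\xi_i$ with $\xi_i\in\wh A$: a homogeneous bundle is a priori only an iterated extension of translates by unipotent bundles, and excluding nontrivial unipotent parts is exactly where slope-polystability of $\cF^{\vee}\otimes\cF$ (hence slope-stability of $\cF$) and the Uhlenbeck--Yau correspondence enter. Once semisimplicity is secured, everything else is a purely formal consequence of the simplicity and stability of $\cF$.
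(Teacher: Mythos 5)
The paper itself gives no proof of this statement: it is quoted directly from Mukai (Proposition 7.1 in \cite{muksemi}), so the only meaningful comparison is with Mukai's original argument. Your proof is correct in substance and takes a genuinely different, transcendental route. Mukai's proof is algebraic, resting on his structure theory of semi homogeneous bundles (a simple semi homogeneous bundle is the pushforward of a line bundle under an isogeny, and the count $r(\cF)^2$ comes from the isogeny data). You instead use slope-stability of $\cF$, the vanishing of $c_1(\cF^{\vee}\otimes\cF)$ and of $\Delta(\cF)$, and the Kobayashi--Hitchin correspondence to conclude that $\cF^{\vee}\otimes\cF$ is flat unitary, hence (since $\pi_1(A)$ is abelian) a direct sum of $r(\cF)^2$ elements of $\Pic^0(A)$; the count of $\Sigma(\cF)$ is then formal, using that a nonzero map between stable bundles of equal slope and rank is an isomorphism. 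This is consistent with the toolkit of the paper, which invokes exactly the same Uhlenbeck--Yau input in Proposition~\ref{discperp}, and it yields the finer structural statement $\cF^{\vee}\otimes\cF\cong\bigoplus_{\xi\in\Sigma(\cF)}\xi$ as a byproduct. Its only disadvantage compared with Mukai's argument is that it is tied to the complex-analytic setting, which is all the paper needs.

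One citation in your argument needs repair. You justify slope-stability of $\cF$ by ``Mukai's semistability of semi homogeneous bundles together with simplicity'', which reads as the implication ``slope-semistable and simple implies slope-stable''; that implication is false in general (there exist simple, strictly slope-semistable bundles, e.g.\ suitable non-split extensions of non-isomorphic line bundles of equal slope). What you actually need, and what is true, is Mukai's theorem that a \emph{simple semi homogeneous} bundle is slope-stable with respect to every polarization (proved in Section 6 of \cite{muksemi}, prior to and independently of his Proposition 7.1, so there is no circularity). Note also that the paper's Proposition~\ref{discperp} is the converse implication (stable plus the discriminant condition implies simple semi homogeneous) and cannot substitute for it. With that citation corrected, the rest of your proof --- the reduction to $\Sigma(\cF)\subset\Pic^0(A)$, the flatness and character decomposition, and the multiplicity-one count --- is sound.
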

The above result points out a difference between simple semi homogeneous vector bundle on elliptic curves  (i.e.~stable vector bundles) and on higher dimensional abelian varieties. 
 If $A$ is an elliptic curve then a stable vector bundle is isomorphic  
to any other stable vector bundle with the same rank and determinant. 
On the other hand let $(A,\theta)$ be a polarized abelian variety such that $\dim A\ge 2$, and consider  a   moduli space $\cM$ of slope stable vector bundles on $A$ with fixed rank $r$, determinant and $c_2$, under the hypothesis that $2r c_2-(r-1)c_1^2=0$, i.e.~the bundles are semi homogeneous (see  Proposition~\ref{discperp}). Then $\cM$, if not empty, is zero dimensional  (see Proposition 5.9 in~\cite{muksemi}) but    is not  a singleton. In fact suppose that $[\cF]\in\cM$. If $\xi$ is an $r$-torsion line bundle  then  $\cF\otimes\xi$ has the same rank, determinant and $c_2$ as $\cF$, but  for most $\xi$  it is not isomorphic to $\cF$  by Proposition~\ref{quadrato}.

Because of this fact the analogue of Proposition~\ref{enerzero} for $X$ of Type $K3^{[2]}$ requires an additional argument.

\subsection{Modular sheaves on Lagrangian fibrations and semi homogeneous vector bundles}\label{subsec:semhomlag}
\setcounter{equation}{0}
Let  $\pi\colon X\to\PP^n$ be  a Lagrangian fibration on a HK $X$. We keep the notation introduced in Subsection~\ref{subsec:pazzia}.   In particular if $X_t$ is a smooth fiber then $\theta_t\in H^{1,1}_{\ZZ}(X_t)$ is the ample class of Remark~\ref{eccoteta}, and 
$f:=c_1(\pi^{*}\cO_{\PP^n}(1))$.
As is well-known $q_X(f,f)=0$.
\begin{lmm}\label{zerene}
Let  $\pi\colon X\to\PP^n$ be a  Lagrangian fibration of a HK manifold of dimension $2n$. Suppose that $\cF$ is a modular torsion free sheaf on $X$. Let $t\in\PP^n$ be a general point, and let $\cF_t:=\cF_{|X_t}$ be the restriction of $\cF$ to $X_t$. Then 
\begin{equation}\label{intzero}
\int_{X_t}\Delta(\cF_t)\smile \theta_t^{n-2}=0.
\end{equation}
\end{lmm}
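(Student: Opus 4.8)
The plan is to mimic the proof of Lemma~\ref{contreffe}, replacing the class $\lambda_{\cE,\cF}$ by the discriminant $\Delta(\cF)$, and to exploit modularity of $\cF$ in the form recorded in Remark~\ref{deltanti} together with the relation $q_X(f,f)=0$. First I would choose a general $t\in\PP^n$ so that $X_t$ is smooth and $\cF$ is locally free in a neighbourhood of $X_t$; writing $i_t\colon X_t\hra X$ for the inclusion, functoriality of topological Chern classes then gives $\Delta(\cF_t)=i_t^{*}\Delta(\cF)$. By Remark~\ref{eccoteta} the restriction $i_t^{*}h$ of an ample class $h$ on $X$ is a positive rational multiple of $\theta_t$, so it suffices to prove that $\int_{X_t}\Delta(\cF_t)\smile (i_t^{*}h)^{n-2}=0$.

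Next I would convert this into an integral over $X$. Since $f^n=\pi^{*}(c_1(\cO_{\PP^n}(1))^n)$ is the Poincar\'e dual of the fiber $X_t$, the projection formula yields
\begin{equation*}
\int_{X_t}\Delta(\cF_t)\smile (i_t^{*}h)^{n-2}=\int_{X_t} i_t^{*}\bigl(\Delta(\cF)\smile h^{n-2}\bigr)=\int_X \Delta(\cF)\smile h^{n-2}\smile f^{n},
\end{equation*}
exactly as in the first equality of Lemma~\ref{contreffe}. The class integrated on the right lies in top degree $H^{4n}(X)$, so the expression makes sense.

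The crux is then a purely combinatorial vanishing. Applying the polarized modularity identity~\eqref{sbrodolo} of Remark~\ref{deltanti} to the $2n-2$ classes consisting of $n-2$ copies of $h$ and $n$ copies of $f$, the right-hand side equals $d(\cF)$ times a sum of products of $n-1$ factors $q_X(\cdot,\cdot)$, each product corresponding to a way of pairing these $2n-2$ classes. In any such pairing each copy of $f$ is matched either with a copy of $h$ or with another copy of $f$; since there are only $n-2$ copies of $h$ but $n$ copies of $f$, at least two copies of $f$ must be paired together, producing a factor $q_X(f,f)=0$. Hence every summand vanishes, so $\int_X \Delta(\cF)\smile h^{n-2}\smile f^{n}=0$, which gives~\eqref{intzero}.

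I expect the only delicate point to be the first, essentially bookkeeping, step: justifying that for general $t$ the restriction $\cF_t$ has Chern classes satisfying $\Delta(\cF_t)=i_t^{*}\Delta(\cF)$, which requires $X_t$ to avoid the locus where $\cF$ fails to be locally free (automatic for general $t$ once $\sing\cF$ is a proper subvariety). The substantive input, namely the interaction of modularity with $q_X(f,f)=0$, is immediate from the counting above, and it is this mechanism that forces the integral to vanish for \emph{every} modular sheaf, independently of the value of $d(\cF)$.
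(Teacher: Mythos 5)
Your proof is correct and follows essentially the same route as the paper's: for general $t$ identify $\Delta(\cF_t)$ with $i_t^{*}\Delta(\cF)$, convert the integral over $X_t$ into $\int_X \Delta(\cF)\smile h^{n-2}\smile f^{n}$ using that $f^n$ is Poincar\'e dual to a fiber, and kill the result with the polarized modularity identity of Remark~\ref{deltanti} together with $q_X(f)=0$, since any pairing of $n-2$ copies of $h$ with $n$ copies of $f$ must match at least two $f$'s with each other. The one inaccuracy is your parenthetical claim that a general fiber $X_t$ avoids the non-locally-free locus because $\sing\cF$ is a proper subvariety: $\sing\cF$ has codimension at least $2$ in $X$, but it may still dominate $\PP^n$ (this is precisely why the paper's Proposition~\ref{lagstab} assumes explicitly that $\sing\cF$ does \emph{not} dominate $\PP^n$), in which case \emph{every} fiber meets it. This does not damage the argument, but the correct justification is different: by generic flatness, $\cF$ is flat over a dense open subset of $\PP^n$, so for general $t$ the sheaves $\cF$ and $\cO_{X_t}$ are Tor-independent and a finite locally free resolution of $\cF$ restricts to a resolution of $\cF_t$; this yields $\Delta(\cF_t)=i_t^{*}\Delta(\cF)$ without any local freeness near $X_t$, and is the content of the unproved assertion $\Delta(\cF_t)=\Delta(\cF)_{|X_t}$ in the paper's own proof.
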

\begin{proof}
There exists $\rho>0$ such that $\omega_{|X_t}=\rho\cdot \theta_t$. Since $t\in\PP^n$ is a generic point, we have $\Delta(\cF_t)=\Delta(\cF)_{|X_t}$. Moreover $f^n$ is the Poincar\'e dual of $X_t$.  Hence
\begin{equation}
\rho^{n-2}\int_{X_t}\Delta(\cF_t)\smile \theta_t^{n-2}=\int_{X}\Delta(\cF)\smile \omega^{n-2}\smile f^n.
\end{equation}
The integral on the right vanishes by Remark~\ref{deltanti} and the equality
 $q_X(f)=0$.
\end{proof}
The result below, which follows at once from Lemma~\ref{zerene} and Proposition~\ref{discperp}, gives the connection between modular sheaves on Lagrangian HK varieties and semi homogeneous vector bundles. 
\begin{prp}\label{resemi}
Let  $\pi\colon X\to\PP^n$ be a   Lagrangian fibration of a HK manifold of dimension $2n$. Let $\cF$ be a modular torsion free sheaf on $X$. 
Suppose that $t\in\PP^n$ is a regular value of $\pi$, that $\cF$ is locally-free in a neighborhood of $X_t$, and that $\cF_t$ is slope-stable. Then $\cF_t$ is a semi homogeneous vector bundle.
\end{prp}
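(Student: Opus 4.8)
The plan is to read this off directly from the two preceding results, with the fiber $X_t$ playing the role of the polarized abelian variety in Proposition~\ref{discperp}. First I would record the geometric setup. Since $t$ is a regular value, the fiber $X_t$ is smooth, and by the structure theory of Lagrangian fibrations recalled at the start of Subsection~\ref{subsec:pazzia} it is a compact complex torus of dimension $n$; the class $\theta_t$ of Remark~\ref{eccoteta} is ample on $X_t$, so $(X_t,\theta_t)$ is a polarized abelian variety (in the sense of the notation convention, possibly a torsor). Because $\cF$ is locally free in a neighbourhood of $X_t$, the restriction $\cF_t$ is a genuine vector bundle on $X_t$, and by the convention of Remark~\ref{eccoteta} its assumed slope-stability is $\theta_t$ slope-stability. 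Thus all the input data required by Proposition~\ref{discperp} are in place.

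The one analytic input to supply is the vanishing
\[
\int_{X_t}\Delta(\cF_t)\smile\theta_t^{n-2}=0,
\]
which is exactly the conclusion of Lemma~\ref{zerene}, so I would simply invoke that lemma. Concretely, its proof proceeds by writing $\omega_{|X_t}=\rho\cdot\theta_t$ for the ample class $\omega$ and some $\rho>0$, using that $f^n$ is Poincar\'e dual to the fiber class to convert the fibrewise integral into $\int_X\Delta(\cF)\smile\omega^{n-2}\smile f^n$, and observing that the latter vanishes by Remark~\ref{deltanti} together with $q_X(f)=0$.

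With the discriminant integral shown to vanish, Proposition~\ref{discperp} applies verbatim to $(X_t,\theta_t)$ and the $\theta_t$ slope-stable vector bundle $\cF_t$, yielding that $\cF_t$ is simple semi homogeneous; in particular it is semi homogeneous, which is the assertion.

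The only point requiring care --- and the closest thing to an obstacle --- is that Lemma~\ref{zerene} is phrased for a \emph{general} point $t$, whereas here $t$ is merely a regular value at which $\cF$ is locally free. This is harmless: the sole place genericity enters the proof of Lemma~\ref{zerene} is through the identity $\Delta(\cF_t)=\Delta(\cF)_{|X_t}$, i.e.\ the compatibility of the discriminant with restriction. That compatibility holds whenever $X_t$ is a smooth fiber and $\cF$ is locally free near $X_t$, since then the topological Chern classes of $\cF_t$ are the restrictions of those of $\cF$. Hence the hypotheses of the present proposition are precisely calibrated so that Lemma~\ref{zerene} runs, and no genuine difficulty arises.
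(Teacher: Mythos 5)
Your proof is correct and is exactly the paper's argument: the paper derives Proposition~\ref{resemi} ``at once from Lemma~\ref{zerene} and Proposition~\ref{discperp}'', which is precisely the combination you use, with $(X_t,\theta_t)$ playing the role of the polarized abelian variety. Your extra check that the genericity of $t$ in Lemma~\ref{zerene} enters only through the identity $\Delta(\cF_t)=\Delta(\cF)_{|X_t}$ --- automatic here because $\cF$ is locally free near the smooth fiber $X_t$ --- is a sensible filling-in of the paper's ``at once'' and does not change the route.
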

The following result gives a strong restriction on the rank of a modular sheaf on Lagrangian HK varieties of  Type $K3^{[n]}$ or $\Kum_n$ for $n\ge 2$, or of Type OG6 under the hypothesis that the restriction to a generic fiber is a slope stable vector bundle.
\begin{crl}\label{resemibis}
Let $X$ be a HK of Type $K3^{[n]}$, $\Kum_n$ or OG6. Let $\cF$ be a modular torsion free sheaf on $X$. 
Suppose that $t\in\PP^n$ is a regular value of $\pi$, that $\cF$ is locally-free in a neighborhood of $X_t$, and that $\cF_t$ is slope-stable. Then there exist  positive integers $r_0,d$, with $d$ dividing $c_X$,   such that   
$ r(\cF)= \frac{r_0^{n}}{d}$. 
\end{crl}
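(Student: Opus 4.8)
The plan is to feed the fiberwise structure furnished by Proposition~\ref{resemi} into Mukai's rank formula, Proposition~\ref{potenza}, and then to match the resulting denominator against the Fujiki constant by a Fujiki-relation computation. First I would note that $\cF_t$ is simple, since a slope-stable sheaf is simple; by Proposition~\ref{resemi} the bundle $\cF_t$ is semi homogeneous, so it is \emph{simple} semi homogeneous. By Remark~\ref{eccoteta} the class $c_1(\cF_t)=c_1(\cF)|_{X_t}$ lies in the image of $H^2(X;\ZZ)\to H^2(X_t;\ZZ)$, which is generated by $\theta_t$; hence $c_1(\cF_t)=a\,\theta_t$ for some $a\in\ZZ$.

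Next I would record the polarization type of $(X_t,\theta_t)$. For each of the three deformation types the generic Lagrangian fiber is a polarized abelian $n$-fold whose elementary divisors have the shape $(1,\dots,1,d_1,d_2)$ with $d_1\mid d_2$ (for $K3^{[n]}$ the polarization is principal, $(1,\dots,1)$; for $\Kum_n$ and OG6 one invokes the known computation of Lagrangian-fiber polarization types for these classes). With this shape in hand, Proposition~\ref{potenza} applies with $c_1(\cF_t)=a\theta_t$ and yields a positive integer $r_0$ with
\begin{equation*}
r(\cF)=r(\cF_t)=\frac{r_0^{\,n}}{g_1 g_2},\qquad g_i=\gcd\{r_0,d_i\}.
\end{equation*}
Setting $d:=g_1 g_2$ gives $r(\cF)=r_0^{\,n}/d$, so it only remains to prove $d\mid c_X$.

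For the divisibility I would compare two evaluations of $\int_X h^n\smile f^n$, where $h$ is an ample class and $f:=c_1(\pi^*\cO_{\PP^n}(1))$. On one hand, since $f^n$ is the Poincar\'e dual of the fiber and the restriction homomorphism $H^2(X;\ZZ)\to\ZZ\theta_t$ is $\beta\mapsto q_X(\beta,f)$ (a topological identity, verified on a standard Lagrangian fibration and preserved by monodromy, so that $h|_{X_t}=q_X(h,f)\,\theta_t$), one gets
\begin{equation*}
\int_X h^n\smile f^n=\int_{X_t}\bigl(h|_{X_t}\bigr)^n=q_X(h,f)^n\int_{X_t}\theta_t^{\,n}=q_X(h,f)^n\cdot n!\cdot d_1 d_2 .
\end{equation*}
On the other hand, expanding by the polarized Fujiki relation~\eqref{polrel} and using $q_X(f)=0$ exactly as in the proof of Lemma~\ref{contreffe} (only the matchings pairing each $f$ with an $h$ survive, i.e.\ take $\lambda=h$ there) gives $\int_X h^n\smile f^n=c_X\cdot n!\cdot q_X(h,f)^n$. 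Comparing the two expressions and cancelling the positive factor $n!\,q_X(h,f)^n$ yields $c_X=d_1 d_2$. Since $g_i=\gcd\{r_0,d_i\}$ divides $d_i$, I conclude $d=g_1 g_2\mid d_1 d_2=c_X$, which finishes the argument.

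The main obstacle I expect is the geometric input in the second paragraph: verifying that the elementary divisors of $\theta_t$ really have the form $(1,\dots,1,d_1,d_2)$, so that Proposition~\ref{potenza} is applicable at all. The Fujiki-relation comparison cleanly reduces the numerical part to the identity $c_X=d_1 d_2$, but pinning down the \emph{shape} of the fiber polarization for $\Kum_n$ and OG6 depends on the explicit determination of Lagrangian-fiber polarization types for these deformation classes, and this is precisely where the restriction to the three listed types enters.
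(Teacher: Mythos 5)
Your overall route coincides with the paper's: Proposition~\ref{resemi} makes $\cF_t$ a \emph{simple} semi homogeneous bundle, Remark~\ref{eccoteta} gives $c_1(\cF_t)=a\theta_t$, the shape $(1,\ldots,1,d_1,d_2)$ of the fiber polarization is taken from the literature, and Proposition~\ref{potenza} yields $r(\cF)=r_0^n/(g_1g_2)$. The paper then finishes by citing, from the very same references (\cite{wieneck1} for $K3^{[n]}$, \cite{wieneck2} for $\Kum_n$, \cite{monrap} for OG6), that $c_X=1$ and $\theta_t$ is principal in the first case, and that $c_X=n+1$ with $d_1d_2\mid n+1$ in the other two, so that $d:=g_1g_2\mid d_1d_2\mid c_X$. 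You instead try to \emph{derive} $d_1d_2=c_X$ by a Fujiki comparison, and that step contains a genuine gap.

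The gap is the identity $h|_{X_t}=q_X(h,f)\,\theta_t$. What is actually formal --- i.e.\ what follows from \eqref{polrel}, $q_X(f)=0$, the fact that $f^n$ is Poincar\'e dual to $X_t$, and Remark~\ref{eccoteta} --- is only a proportionality: writing $\beta|_{X_t}=\phi(\beta)\theta_t$, computing $\int_X\beta\smile h^{n-1}\smile f^n$ both fiberwise and by Fujiki gives $\phi=c\cdot q_X(\cdot,f)$ for some constant $c>0$; and since $\theta_t$ \emph{generates} the image of restriction, $\phi$ is surjective onto $\ZZ$, which forces $c=1/\divisore(f)$, where $\divisore(f)$ is the divisibility of $f$ in $H^2(X;\ZZ)$. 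Your comparison therefore proves $c_X\cdot\divisore(f)^n=d_1d_2$, not $c_X=d_1d_2$; equivalently, the identity you assume (coefficient exactly $q_X(h,f)$, i.e.\ $c=1$) is \emph{equivalent} to $\divisore(f)=1$ and hence, given the shape, to the equality $c_X=d_1d_2$ you want to extract from it --- the argument begs the question unless $\divisore(f)=1$ is proved for every Lagrangian fibration of the three types. Your parenthetical justification (``verified on a standard Lagrangian fibration and preserved by monodromy'') does not close this: it requires knowing that every Lagrangian fibration of the given deformation type is deformation/monodromy equivalent, \emph{as a fibration}, to the standard model, plus the model computation itself (for OG6 that computation is the substance of \cite{monrap}; note also that for $n\equiv 1\pmod 4$ the $K3^{[n]}$ lattice does contain primitive isotropic classes of divisibility $2$, so $\divisore(f)=1$ is not a lattice triviality). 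These are exactly the hard inputs of the cited theorems, which, once invoked, already give the full polarization types ($(1,\ldots,1)$, $(1,\ldots,1,n+1)$, $(1,2,2)$) and hence $d_1d_2\mid c_X$ directly --- your equality $c_X=d_1d_2$ is in fact true, but the Fujiki detour cannot establish it. The repair is simply to quote those results in full, as the paper does.
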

\begin{proof}
If $X$ is of Type $K3^{[n]}$ then $c_X=1$ and $\theta_t$ is a principal polarization, see~\cite{wieneck1}. If $X$ is of Type $\Kum_n$ or OG6 then $c_X=n+1$ and $\theta_t$ is a polarization with elementary divisors $(1,\ldots,1,d_1,d_2)$ where $d_1\cdot d_2$ divides $n+1$ see~\cite{wieneck2} for $\Kum_n$ and~\cite{monrap} for OG6. 
Hence the result  follows from  Proposition~\ref{resemi} and Proposition~\ref{potenza}.
\end{proof}
We end the subsection with a result which is related to Corollary~\ref{resemibis} although there is no  lagrangain fibration in the hypothesis. For the proof see Proposition~2.3 in~\cite{ogmodvb}.
\begin{prp}\label{restrango}
Let $X$ be a HK fourfold of Type $K3^{[2]}$ or $\Kum_2$. Let $\cF$ be a modular torsion-free sheaf on $X$. Let $m$ be a generator of the ideal 
\begin{equation*}
\{q_X(c_1(\cF),\alpha)\mid \alpha\in H^2(X;\ZZ)\}.
\end{equation*}
 Then  $r(\cF)$ divides $m^2$ if $X$ is of Type $K3^{[2]}$, and it divides $3m^2$ if $X$ is of Type $\Kum_2$. 
\end{prp}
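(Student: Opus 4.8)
The plan is to convert the modularity relation~\eqref{fernand} into an arithmetic congruence for $r(\cF)$ by pairing the integral class $c_2(\cF)$ against degree-two classes and invoking Fujiki's relation~\eqref{polrel}. Write $r:=r(\cF)$, $c_1:=c_1(\cF)$, $c_2:=c_2(\cF)$ and $d:=d(\cF)$; here $X$ is a fourfold, so $2n=4$. From the definition of the discriminant~\eqref{dueversioni} one has $2rc_2=\Delta(\cF)+(r-1)c_1^2$. Cupping with two integral classes $\alpha,\beta\in H^2(X;\ZZ)$ and integrating, the contribution of $\Delta(\cF)$ is $d\,q_X(\alpha,\beta)$ by the polarized form of modularity (Remark~\ref{deltanti}), while $\int_X c_1^2\smile\alpha\smile\beta$ equals $c_X\bigl(q_X(c_1)q_X(\alpha,\beta)+2q_X(c_1,\alpha)q_X(c_1,\beta)\bigr)$ by~\eqref{polrel}. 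Collecting terms gives, for all integral $\alpha,\beta$,
\[
2r\int_X c_2\smile\alpha\smile\beta=\lambda\,q_X(\alpha,\beta)+2(r-1)c_X\,q_X(c_1,\alpha)q_X(c_1,\beta),
\]
where $\lambda:=d+(r-1)c_X\,q_X(c_1)$.

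I would then exploit integrality. The left-hand side is an integer divisible by $2r$, because $c_2\in H^4(X;\ZZ)$. Since the BBF lattice of $X$ contains a hyperbolic plane, the value $q_X(\alpha,\beta)=1$ is attained, and the displayed identity then shows $\lambda\in\ZZ$. Reducing modulo $2r$ and using $2(r-1)c_X\equiv-2c_X$ yields
\[
\lambda\,q_X(\alpha,\beta)\equiv 2c_X\,q_X(c_1,\alpha)q_X(c_1,\beta)\pmod{2r}
\]
for all integral $\alpha,\beta$.

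The decisive step is the choice of test classes. The orthogonal complement $c_1^{\perp}\subset H^2(X;\ZZ)$ is an even indefinite lattice of rank $22$ (type $K3^{[2]}$) or $6$ (type $\Kum_2$), and its discriminant group has length bounded in terms of that of $H^2(X;\ZZ)$; Nikulin's embedding theorem then gives a splitting $c_1^{\perp}\cong U\oplus L'$. Taking $\alpha,\beta$ to be the standard isotropic basis of this $U$, one has $q_X(\alpha,\beta)=1$ and $q_X(c_1,\alpha)=q_X(c_1,\beta)=0$, so the congruence forces $\lambda\equiv0\pmod{2r}$. Finally, choosing $\alpha=\beta=\alpha_0$ with $q_X(c_1,\alpha_0)=m$ (possible by the very definition of $m$) and using $2r\mid\lambda$, the congruence collapses to $2c_X m^2\equiv0\pmod{2r}$, that is $r\mid c_X m^2$. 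As $c_X=1$ in type $K3^{[2]}$ and $c_X=3$ in type $\Kum_2$, this is precisely the claim.

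I expect the lattice-theoretic input to be the only genuinely delicate point: one must verify that Nikulin's numerical criterion for splitting off a hyperbolic plane is met uniformly, and in particular handle the cases where $c_1$ is non-primitive or isotropic by passing to the primitive generator of its ray (on which $c_1^{\perp}$ only depends) and controlling the length of the resulting discriminant form. Everything else is the multilinear bookkeeping behind the displayed identity and a reduction modulo $2r$ that turns the rank-one term $q_X(c_1,\alpha)q_X(c_1,\beta)$, together with the factor $2$ in front of it, into the divisibility $r\mid c_X m^2$.
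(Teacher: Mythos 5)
Your main computation is correct, and it is in substance the expected argument (the survey itself does not reprove this statement but refers to Proposition~2.3 of~\cite{ogmodvb}, which proceeds along the same lines): combining~\eqref{dueversioni}, Remark~\ref{deltanti} and the polarized Fujiki relation~\eqref{polrel} for $2n=4$ gives exactly your displayed identity; integrality of $2r\int_X c_2(\cF)\smile\alpha\smile\beta$, the presence of a hyperbolic plane in the BBF lattice, and the values $c_X=1$ (Type $K3^{[2]}$), $c_X=3$ (Type $\Kum_2$) then yield $\lambda\in\ZZ$, the congruence modulo $2r$, and finally $r\mid c_X m^2$ --- \emph{provided} one can produce $\alpha,\beta$ orthogonal to $c_1(\cF)$ with $q_X(\alpha,\beta)=1$ (or, by bilinearity of the congruence, finitely many such pairs whose pairings generate the unit ideal).

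That lattice-theoretic step is where your write-up has a genuine, though reparable, gap. Nikulin's splitting criterion applies to \emph{nondegenerate} even indefinite lattices, and $c_1^{\perp}$ is degenerate precisely when $q_X(c_1)=0$ and $c_1\neq 0$: its radical contains the primitive generator $v$ of $\QQ c_1\cap H^2(X;\ZZ)$, so the asserted splitting $c_1^{\perp}\cong U\oplus L'$ is not meaningful there, and your proposed remedy --- passing to the primitive generator of the ray --- does nothing, since $c_1^{\perp}$ depends only on the ray. The case can be closed as follows. First, $v$ necessarily has divisibility $1$: otherwise $[v/\mathrm{div}(v)]$ would be a nonzero element of the discriminant group on which the discriminant quadratic form vanishes (because $q_X(v)=0$), whereas for $U^3\oplus E_8(-1)^2\oplus\langle-2\rangle$ and $U^3\oplus\langle-6\rangle$ every nonzero element of the discriminant group has nonzero value ($-1/2$, respectively $-k^2/6 \bmod 2\ZZ$). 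Then choose $u$ with $q_X(v,u)=1$ and replace $u$ by $u-\tfrac{1}{2}q_X(u)\,v$, so that $M:=\langle v,u\rangle\cong U$ is a unimodular, hence orthogonal, direct summand: $H^2(X;\ZZ)=M\oplus M^{\perp}$ and $c_1^{\perp}=\ZZ v\oplus M^{\perp}$. Since $M^{\perp}$ is even, indefinite, with discriminant group isomorphic to that of $H^2(X;\ZZ)$ (length $1\le \mathrm{rk}(M^{\perp})-2$), Nikulin now does apply to $M^{\perp}$ and produces the required pair inside $c_1^{\perp}$. Together with the trivial observation that the case $c_1=0$ is vacuous (then $m=0$), this completes your proof; in the nondegenerate cases $q_X(c_1)\neq 0$ your Nikulin argument is fine as stated, since $\ell\bigl(A_{c_1^{\perp}}\bigr)\le \ell\bigl(A_{H^2(X;\ZZ)}\bigr)+2\le 3$, well below $\mathrm{rk}(c_1^{\perp})-2$ for both deformation types.
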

\subsection{Lagrangian Noether-Lefschetz divisors in $\cK^i_e$}\label{subsec:lagdiv}
\setcounter{equation}{0}
Recall that $\cK^i_e$ is the moduli space of polarized HK's of Type $K3^{[2]}$ with polarization of BBF square $e$ and divisibility given by $i$ (which is either $1$ or $2$) - see Subsection~\ref{risprin}. 
\begin{dfn}\label{ennelagr}
For $d$ a strictly positive integer let $\cN_e^i(d)\subset \cK_e^i$ be the closure of the set of points $[(X,h)]$  such that $H^{1,1}_{\ZZ}(X)$ 
contains a  saturated rank $2$ sublattice generated by $h,f$, where 
\begin{equation}\label{disez}
q_{X}(h, f)=d,\quad q_{X}(f,f)=0.
\end{equation}
\end{dfn}
Notice that $\cN_e^i(d)\subset \cK_e^i$ is a Noether-Lefschetz divisor analogous to $\cN_e(d)\subset T_e$, see  Definition~\ref{luogoell}. 

The next result is the analogue of Proposition~\ref{unafibrazione}, for the proof see Proposition B.2 in~\cite{ogmodvb}.
\begin{prp}\label{unicafibr}
 Keeping notation as above, suppose in addition that $d$ is even if $i=2$, and that 
 \begin{equation}\label{chiarello}
d>10(e+1),\quad e\notdivides 2d.
\end{equation}
Then $\cN_e^i(d)$ is closed of pure codimension $1$ (in particular non empty), and  if 
$[(X,h)]\in\cN_e^i(d)$ is  generic  there is one and only one Lagrangian fibration $\pi\colon X\to\PP^2$  such that, letting  $f:=c_1(\pi^{*}\cO_{\PP^2}(1))$,   the equalities in~\eqref{disez} hold and moreover the sublattice  
$\la h,f\ra\subset H^{1,1}_{\ZZ}(X)$ is saturated. 
\end{prp}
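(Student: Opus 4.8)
The plan is to adapt the proof of Proposition~\ref{unafibrazione}, replacing the input on elliptic surfaces by the theory of Lagrangian fibrations on HK fourfolds of Type $K3^{[2]}$. First I would settle the lattice-theoretic part: the rank $2$ lattice with Gram matrix $\left(\begin{smallmatrix} e & d\\ d & 0\end{smallmatrix}\right)$, of discriminant $-d^2$, admits a primitive embedding into the $K3^{[2]}$ lattice with $h$ of divisibility $i$ precisely when $d$ is even in the case $i=2$; the corresponding Noether-Lefschetz (Heegner) locus in the orthogonal modular variety $\cK_e^i$ is then closed of pure codimension $1$, and in particular non empty. For the main assertion I would restrict to a very general $[(X,h)]\in\cN_e^i(d)$, so that $\NS(X)=H^{1,1}_{\ZZ}(X)=\la h,f\ra$ is exactly this rank $2$ lattice, with $q_X(h)=e$, $q_X(f)=0$ and $q_X(h,f)=d$.

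The crucial step is to show that $f$ is nef. Applying Lemma~\ref{nocamere} with $\alpha=f$ (primitive isotropic) and $\beta=h$ (so that $q_X(\beta)=e\ge 0$), every $\gamma\in\NS(X)$ with $q_X(\gamma)<0$ satisfies $q_X(\gamma)\le-\tfrac{2d}{1+e}$; the hypothesis $d>10(e+1)$ then forces $q_X(\gamma)<-20$. Hence $\NS(X)$ contains no class of square $-2$ and none of square $-10$, so it contains no wall-defining (MBM) class, since for Type $K3^{[2]}$ every such class has square $-2$ or $-10$ (Bayer-Hassett-Tschinkel, Mongardi). It follows that the positive cone of $X$ has no walls, so the closure of the nef cone coincides with the closure of the positive cone; as $f$ lies on the boundary of the positive cone containing $h$ (recall $q_X(h,f)=d>0$), the class $f$ is nef.

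Next I would single out $f$ among the isotropic directions. Up to sign, $\NS(X)$ has exactly two primitive isotropic classes, namely $f$ and $\alpha:=\tfrac{1}{\gcd\{e,2d\}}(2d\,h-e\,f)$, both nef by the previous step. A short computation gives $q_X(h,\alpha)=\tfrac{de}{\gcd\{e,2d\}}$, which is $\neq d$ because $e\notdivides 2d$. Therefore $f$ is the unique primitive nef isotropic class with $q_X(h,f)=d$. Now I would invoke the hyperk\"ahler SYZ theorem for Type $K3^{[2]}$ (Bayer-Macr\`i, Markman): a non zero nef isotropic class on such a HK is semiample, so some multiple of $f$ is base-point free and, by Matsushita and Hwang, defines a Lagrangian fibration $\pi\colon X\to\PP^2$ with $c_1(\pi^{*}\cO_{\PP^2}(1))=f$. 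Since the fibration class of any Lagrangian fibration is a primitive nef isotropic class, uniqueness of $f$ gives uniqueness of $\pi$. As the points with $\NS(X)=\la h,f\ra$ are dense in $\cN_e^i(d)$, this proves the statement.

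The main obstacle is the passage from \emph{nef} to \emph{semiample} for $f$, i.e.~the appeal to the SYZ theorem; this is the only genuinely deep input and is where the $K3^{[2]}$ deformation type is essential. Everything else---the embedding of the rank $2$ lattice, the exclusion of negative classes through Lemma~\ref{nocamere}, and the separation of $f$ from $\alpha$ via $q_X(h,\alpha)\neq d$---is elementary once the classification of MBM classes is taken for granted.
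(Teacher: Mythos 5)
Your proposal is correct and follows essentially the same route as the paper: the paper defers the proof to Proposition~B.2 of~\cite{ogmodvb}, but both that proof and the paper's own template argument for the surface case (Proposition~\ref{unafibrazione}) proceed exactly as you do, namely Lemma~\ref{nocamere} to exclude all classes of square $-2$ and $-10$ (hence all MBM/wall classes for Type $K3^{[2]}$), the identification of the two primitive isotropic classes with the non-divisibility hypothesis $e\notdivides 2d$ singling out $f$, and the SYZ theorem for Type $K3^{[2]}$ to pass from nef isotropic to an actual Lagrangian fibration. Your identification of the nef-to-semiample step as the one genuinely deep input, replacing the elementary global-generation argument available on $K3$ surfaces, is exactly the point.
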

Let hypotheses be as in Proposition~\ref{unicafibr} and let $[(X,h)]\in\cN_e^i(d)$ be a  generic point. Let $\pi\colon X\to\PP^2$ be the unique Lagrangian fibration as in Proposition~\ref{unicafibr}. Let $U\subset\PP^2$ be the open dense set of regular values of $\pi$  and let $X(U):=\pi^{-1}(U)$. Let 
$\Pic^0(X(U)/U)$  be the relative Picard scheme, and for $t\in U$ let  $A_t$ be 
 the fiber of $\Pic^0(X_U/U)\to U$ over  $t$. Then $A_t$  is an abelian surface and
the fundamental group $\pi_1(U,t)$ acts by monodromy on the subgroup $A_{t,tors}$ of torsion points. The result below will be used later on (for the full proof see Corollary B.5 in~\cite{ogmodvb}). It is motivated by Proposition~\ref{quadrato} - recall that in Theorem~\ref{unicita} the bundles have rank $r_0^2$.
\begin{prp}\label{modinv}
Keep hypotheses as above, and suppose that 
 $V\subset A_t[r_0^2]$ is a coset (of a subgroup) of cardinality $r_0^4$ invariant under the action of monodromy. 
 Then $V= A_t[r_0]$.
\end{prp}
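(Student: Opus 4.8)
The plan is to reinterpret the statement as a fact about the geometric monodromy group and then reduce it to elementary group theory on invariant subgroups and fixed vectors. Write $\Gamma:=\im(\pi_1(U,t)\to\Aut H_1(A_t;\ZZ))$ for the monodromy group; since $A_t[n]\cong H_1(A_t;\ZZ)\otimes\ZZ/n\ZZ$, the action of monodromy on the torsion points is the reduction modulo $n$ of the $\Gamma$-action on $H_1(A_t;\ZZ)\cong\ZZ^4$. A coset $V=v_0+H$ of cardinality $r_0^4$ has difference set $H=V-V$, which is a subgroup of $A_t[r_0^2]$ of cardinality $r_0^4$, and invariance of $V$ under monodromy forces $\Gamma$-invariance of $H$. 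So I would first prove that $H=A_t[r_0]$, and then that $v_0\in H$; together these give $V=A_t[r_0]$.

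Both steps depend on knowing that $\Gamma$ is large, and establishing this is the heart of the matter. The plan is to use that the monodromy representation is a deformation invariant of $(X,\pi)$ and to degenerate, within the relevant component of $\cN_e^i(d)$, to a Hilbert square $S^{[2]}$ of a general elliptic $K3$ surface $S\to\PP^1$, as produced in Subsection~\ref{subsec:fossanova}. For such a degeneration the Lagrangian fibration is $S^{[2]}\to(\PP^1)^{(2)}\cong\PP^2$, a general fibre is a product $C_{t_1}\times C_{t_2}$ of two elliptic fibres of $S$, and accordingly $A_t=\widehat{C}_{t_1}\times\widehat{C}_{t_2}$ with $H_1(A_t;\ZZ)=M_1\oplus M_2$, each $M_i\cong\ZZ^2$. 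The generators of $\pi_1(U)$ are loops around the singular values in each $\PP^1$-factor, acting through the two copies of $\SL_2(\ZZ)$, together with a loop around the diagonal of $(\PP^1)^{(2)}$, which swaps the two factors. Since a general elliptic $K3$ has full $\SL_2(\ZZ)$ monodromy, $\Gamma$ contains the wreath product $(\SL_2(\ZZ)\times\SL_2(\ZZ))\rtimes\ZZ/2\ZZ$, whose reduction modulo $r_0^2$ surjects onto $(\SL_2(\ZZ/r_0^2\ZZ)\times\SL_2(\ZZ/r_0^2\ZZ))\rtimes\ZZ/2\ZZ$.

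Granting this, the two steps are routine. For the classification of invariant subgroups I would reduce by the Chinese remainder theorem to $r_0=p^e$ a prime power and use that the $\SL_2(\ZZ/p^{2e}\ZZ)$-invariant subgroups of $(\ZZ/p^{2e}\ZZ)^2$ are exactly the $p^aM_i$ (because $\SL_2$ is transitive on primitive vectors and $\{gu-u:g\in\SL_2\}$ generates $M_i$ for $u$ primitive). A Goursat-type argument then forces any $\SL_2\times\SL_2$-invariant subgroup of $M_1\oplus M_2$ to split as $p^aM_1\oplus p^bM_2$, and swap-invariance forces $a=b$; matching cardinality $r_0^4$ gives $a=e$, that is $H=p^eM_1\oplus p^eM_2=A_t[r_0]$. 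For the coset, since $H=A_t[r_0]$ is $\Gamma$-invariant, $\Gamma$ acts on $A_t[r_0^2]/A_t[r_0]\cong A_t[r_0]$ and the class of $v_0$ is a $\Gamma$-fixed vector; as $\SL_2(\ZZ/r_0\ZZ)$ has no nonzero fixed vector in its standard module for $r_0>1$, neither does the wreath product, so $v_0\in A_t[r_0]$ and $V=A_t[r_0]$ (the case $r_0=1$ being trivial since then $A_t[r_0^2]=0$).

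The hard part will be the monodromy computation, namely verifying that the degeneration stays inside the component of $\cN_e^i(d)$ being studied and that the elliptic $K3$ so obtained is general enough to carry full $\SL_2(\ZZ)$ monodromy, so that the wreath product is genuinely realized modulo $r_0^2$. Once $\Gamma$ is known, the splitting of invariant subgroups and the vanishing of fixed vectors are elementary, and this is presumably the content of the computation behind Corollary~B.5 of~\cite{ogmodvb}.
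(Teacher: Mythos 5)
Your group-theoretic core is correct: passing to the difference subgroup $H=V-V$, splitting the invariant subgroups by the Goursat-type argument, and killing the coset by the absence of nonzero fixed vectors all work, and they do suffice once the monodromy is known to contain the wreath product modulo $r_0^2$. But your geometric input is genuinely different from the paper's, and it is where the proposal breaks. The paper does not degenerate at all: following Markman it associates to the generic $[(X,h)]\in\cN_e^i(d)$ a generic degree-$2$ $K3$ surface $S\to\PP^2$ branched over a smooth sextic $B$, identifies $\Pic^0(X/\PP^2)\to\PP^2$ with the smooth locus of the relative Jacobian $\cJ(S)\to(\PP^2)^{\vee}$, so that $A_t$ is the Jacobian of the genus-$2$ double cover of a line $R$ transverse to $B$; the monodromy on $H^1(A_t;\ZZ)$ is then the \emph{full} group $\mathrm{Sp}_4(\ZZ)$, for which the lattice argument is simpler than yours (transitivity of $\mathrm{Sp}_4(\ZZ/r_0^2\ZZ)$ on vectors of a given order pins down the invariant subgroups at once, with no Goursat argument and no swap needed). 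So your closing guess that Corollary B.5 of \cite{ogmodvb} rests on the wreath-product computation is not right.

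The genuine gap is the degeneration step. Your argument needs the generic $[(X,h)]$ of the component under consideration to specialize, within a family of Lagrangian fibrations, to the Hilbert square $S^{[2]}$ of an elliptic $K3$. Proposition~\ref{unicafibr} only asserts that $\cN_e^i(d)$ is closed of pure codimension $1$; irreducibility is neither claimed nor known, and the only component whose closure is known to contain the $S^{[2]}$ locus is $\cN_e^i(d)^{\rm good}$, which is \emph{constructed} later (Subsection~\ref{subsec:vblag}) precisely as the closure of the family of deformations of such Hilbert squares. For a generic point of any other component your degeneration is simply not available, so the proposal proves the statement only on one component, not as stated; the paper's route via Markman's structural result is what makes the conclusion uniform over all of $\cN_e^i(d)$. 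Two further points are repairable but must be fixed: monodromy is \emph{not} a deformation invariant --- the discriminant curve and $\pi_1(U)$ jump under specialization --- and what you actually need is the standard semicontinuity statement that a loop avoiding the special discriminant avoids all nearby discriminants, so that the special member's monodromy group embeds into the generic member's; and the claim that a general elliptic $K3$ (with the rank-$2$ Picard lattice of Claim~\ref{eccoell}) has full $\SL_2(\ZZ)$ monodromy, which you need in order to realize the wreath product modulo $r_0^2$, is true but requires a proof or a reference rather than an appeal to generality.
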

\begin{proof}[Idea of proof]
The fibers of $\pi\colon X\to\PP^2$ are integral, hence $\Pic^0(X/\PP^2)\to\PP^2$ is defined (independently of the choice of a polarization). 
One describes  $\Pic^0(X/\PP^2)\to\PP^2$ as follows.  Following Markman (Subsection~4.1 in~\cite{markman-lagr}) there is a generic polarized $K3$ surface $S$ of degree $2$ associated to $X$.  Let $f\colon S\to\PP^2$ be the corresponding double cover. We let $B\subset \PP^2$ be the branch divisor, a smooth generic sextic curve. Let $\cJ(S)\to(\PP^2)^{\vee}$ be the relative Jacobian with fiber  
$\Pic^0(f^{-1}(L))$ over a line $L$ - this is a HK of Type $K3^{[2]}$,  a moduli space of pure torsion sheaves on $S$. 
Let $\cJ_0(S)\subset \cJ(S)$ be the open dense subset of   smooth points  of the map 
$\cJ(S)\to(\PP^2)^{\vee}$ (i.e.~smooth points of $\cJ(S)$ with surjective differential). 
Using results of Markman, one shows that $\Pic^0(X/\PP^2)\to\PP^2$ is 
 isomorphic to $\cJ(S)_0\to(\PP^2)^{\vee}$, for a certain identification $\PP^2\overset{\sim}{\lra} (\PP^2)^{\vee}$. Under this identification $t\in\PP^2$ corresponds to a line $R\in (\PP^2)^{\vee}$
 transverse to $B$, and the corresponding Lagrangian fiber $A_t$   is the Jacobian of the  double cover of $R$ ramified over $R\cap B$.  Hence the  monodromy action on $H^1(A_t;\ZZ)$ is the full symplectic group, where the symplectic form is the one given by the prinicpal polarization on the Jacobain of the curve $f^{-1}(R)$. The result follows from this and a simple argument.
\end{proof}
\subsection{Stable vector bundles  on Lagrangian HK's and their restrictions to a generic fiber}\label{subsec:evadere}
\setcounter{equation}{0}
The main result of the present subsection, given below, is a partial analogue of the results in Subsection~\ref{subsec:algardi}.  In particular it gets around the difficulty that was pointed out at the end of Subsection~\ref{subsec:semihom}, namely that   moduli spaces of slope stable semi homogeneous vector bundles  with fixed rank, determinant and $c_2$ on abelian varieties of dimension at least $2$ are not   singletons.
\begin{prp}\label{propriostab}
Let $a_0,d$ be  positive integers and $i\in\{1,2\}$. Suppose that $e\not\divides 2d$, that $d$ is even if $i=2$, and that 
\begin{equation}\label{golfangora}
d>\max\left\{\frac{1}{2}a_0(e+1), 10(e+1)\right\}.
\end{equation}
If $[(X,h)]\in\cN_e^i(d)^0$ is generic the following hold:  
\begin{enumerate}
\item
Let  $\cE$ be an $h$ slope-stable vector bundle on $X$ such that 
\begin{enumerate}
\item[(a)]
$a(\cE)\le a_0$, where $a(\cE)$ is as in Definition~\ref{adieffe}, 
\item[(b)]
there exists an integer $m$ such that $r(\cE)=(mi)^2$, $c_1(\cE)=m h$, and $\gcd\{mi,\frac{d}{i}\}= 1$.
\end{enumerate}
Then
 the restriction of $\cE$ to a generic fiber of  the associated Lagrangian fibration $\pi\colon X\to\PP^2$ is slope-stable.
\item
If $\cE,\cE'$ are  $h$ slope-stable vector bundles on $X$ such that Items~(a) and (b)  hold for $\cE$ and $\cE'$, then for a generic $t\in\PP^2$ the restrictions of $\cE$ and $\cE'$ to $X_t$ are isomorphic. 
\end{enumerate}
\end{prp}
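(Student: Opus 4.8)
The plan is to reduce both items to the restriction of $\cE$ (and $\cE'$) to a generic Lagrangian fiber, where Mukai's theory of semi homogeneous bundles takes over. Throughout let $\pi\colon X\to\PP^2$ be the Lagrangian fibration attached to the generic point $[(X,h)]\in\cN_e^i(d)^0$ by Proposition~\ref{unicafibr}, so that $\{f,h\}$ is a basis of $\NS(X)$ with $q_X(h,f)=d$ and $q_X(f,f)=0$ as in~\eqref{disez}; write $X_t$ for a generic fiber and $\theta_t$ for the ample class of Remark~\ref{eccoteta}, which is a principal polarization because $X$ is of Type $K3^{[2]}$ (see the proof of Corollary~\ref{resemibis}). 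The first thing I would record is that $h$ is $a(\cE)$-suitable. Since $\NS(X)=\la h,f\ra$ is non-degenerate of discriminant $-d^2$ and $f$ is primitive isotropic, Lemma~\ref{nocamere} with $\beta=h$ gives $q_X(\gamma)\le-\tfrac{2d}{1+e}$ for every $\gamma\in\NS(X)$ of negative square; the bound $d>\tfrac12 a_0(e+1)$ from~\eqref{golfangora} then forces $q_X(\gamma)<-a_0\le-a(\cE)$, so there are \emph{no} $a(\cE)$-walls, $\Amp(X)_{\RR}$ is a single open $a(\cE)$-chamber, and $h$ is $a(\cE)$-suitable by Remark~\ref{opportuno}. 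As $\cE$ is a modular vector bundle with $\sing\cE=\es$, Proposition~\ref{lagstab}(ii) then shows that $\cE_t:=\cE_{|X_t}$ is slope-semistable for generic $t$. This mirrors the first half of Subsection~\ref{subsec:algardi} and is where the lower bound on $d$ is consumed.

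For Item~(1) it remains to upgrade semistability to stability. By Lemma~\ref{zerene} we have $\int_{X_t}\Delta(\cE_t)=0$; since each Jordan--H\"older factor of $\cE_t$ satisfies Bogomolov's inequality and the discriminants contribute non-negatively, every factor has vanishing discriminant and is therefore simple semi homogeneous by Proposition~\ref{discperp}. For generic $t$ the fiber $X_t$ is a generic principally polarized abelian surface, so $\NS(X_t)=\ZZ\theta_t$ and each factor $\cF_j$ has $c_1(\cF_j)=a_j\theta_t$. Mukai's constraint (Proposition~\ref{potenza} with $n=2$ and elementary divisors $(1,1)$) gives $r(\cF_j)=r_{0,j}^2$ with $r_{0,j}\mid a_j$ and $\gcd\{r_{0,j},a_j/r_{0,j}\}=1$, so the common slope $\mu(\cF_j)=2(a_j/r_{0,j})/r_{0,j}$ determines $r_{0,j}$; hence all factors share a single rank $r_0^2$ and a single $a_j$. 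Writing $r(\cE_t)=(mi)^2=k\,r_0^2$ and $s:=mi/r_0$, so that $k=s^2$, a short computation with the Fujiki relation~\eqref{polrel} expresses $\deg_{\theta_t}(\cE_t)$ in terms of $q_X(c_1(\cE),f)=m\,d$ and shows that $s\mid d/i$; as $s\mid mi$ by construction, the coprimality $\gcd\{mi,d/i\}=1$ from~(b) forces $s=1$, i.e. $k=1$. Thus $\cE_t$ is slope-stable. This single divisibility computation is the crux of Item~(1): it is precisely the replacement for the condition ``rank coprime to the fiber degree'' of the surface case.

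For Item~(2) both $\cE_t$ and $\cE'_t$ are, by Item~(1) and Proposition~\ref{resemi}, simple semi homogeneous of the same rank $(mi)^2$ and the same determinant. By Mukai's structure theory such bundles form a single orbit under tensoring by $\Pic^0(X_t)$, so $\cE'_t\cong\cE_t\otimes\xi_t$ with $\xi_t\in\Pic^0(X_t)$; equality of determinants forces $\xi_t$ to be $(mi)^2$-torsion, and $\xi_t$ is well defined modulo the stabilizer $\Sigma(\cE_t)=\{\xi\mid\cE_t\otimes\xi\cong\cE_t\}$, a subgroup of $\Pic^0(X_t)$ of order $r(\cE_t)^2=(mi)^4$ by Proposition~\ref{quadrato}. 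Identifying $\Pic^0(X_t)$ with the fiber $A_t$ of the relative Picard scheme, the coset $\xi_t\,\Sigma(\cE_t)$ is, as $t$ ranges over the regular values, a globally defined (hence monodromy-invariant) coset of cardinality $(mi)^4$ inside $A_t[(mi)^2]$. Proposition~\ref{modinv} with $r_0=mi$ identifies such a coset with $A_t[mi]$; the same proposition applied to the monodromy-invariant subgroup $\Sigma(\cE_t)$ gives $\Sigma(\cE_t)=A_t[mi]$ as well, whence $\xi_t\in\Sigma(\cE_t)$ and $\cE'_t\cong\cE_t$ for generic $t$.

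The main obstacle is the final monodromy step. This is exactly where the higher-dimensional theory departs from the elliptic $K3$ case: on an abelian surface, slope-stable semi homogeneous bundles with fixed rank and determinant are \emph{not} unique (Proposition~\ref{quadrato} and the discussion following it), so Item~(2) cannot be reduced to a fiberwise Atiyah-type rigidity as in Proposition~\ref{enerzero}. Pinning down the twist $\xi_t$ instead forces one to control which cosets are monodromy-invariant, and the only nontrivial input available for that is Proposition~\ref{modinv}. The point requiring the most care is the verification that the pointwise data $\xi_t\,\Sigma(\cE_t)$ really is monodromy-invariant, i.e. that $\cE$ and $\cE'$ being \emph{global} vector bundles makes the fiberwise construction a global section of the relevant local system to which Proposition~\ref{modinv} can be applied.
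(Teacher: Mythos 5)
Your Item~(2) is, step for step, the paper's own proof: the paper forms the set $V_t:=\{\xi \mid \cE_t\cong\cE'_t\otimes\xi\}$, gets nonemptiness from Mukai's Theorem~7.11 in~\cite{muksemi}, cardinality $r^2=(mi)^4$ from Proposition~\ref{quadrato}, observes monodromy invariance of this fibrewise coset and containment in $A_t[(mi)^2]$ from equality of determinants, and applies Proposition~\ref{modinv} to conclude $V_t=A_t[mi]\ni 0$; your stabilizer-coset phrasing is the same argument. For Item~(1) the paper gives no proof (it cites Proposition~4.1 of~\cite{ogmodvb} as ``a straightforward computation''), and your reconstruction --- no $a(\cE)$-walls via Lemma~\ref{nocamere} and~\eqref{golfangora}, semistability of $\cE_t$ via Proposition~\ref{lagstab}(ii), then semi homogeneity of the Jordan--H\"older factors via Lemma~\ref{zerene}, Bogomolov and Proposition~\ref{discperp}, and Mukai's rank constraint (Proposition~\ref{potenza}) combined with $\gcd\{mi,d/i\}=1$ --- is exactly the intended computation; your arithmetic is correct (the common slope pins down $r_{0,j}$ and $a_j/r_{0,j}$, and integrality of $a/r_0=(d/i)/s$ together with $s\mid mi$ forces $s=1$).

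One step in Item~(1) is stated too strongly. For \emph{generic} $t$ (which, per the paper's conventions, means $t$ in a dense Zariski-open subset of $\PP^2$ --- and this is what the conclusion of Item~(1) asserts) it is \emph{not} true that $\NS(X_t)=\ZZ\theta_t$: the locus in $\PP^2$ where the Picard number of the fiber jumps is a countable union of proper closed subsets, dense in $\PP^2$, so rank one holds only for \emph{very general} $t$. This matters because Proposition~\ref{potenza} needs $c_1$ of each Jordan--H\"older factor to be an integer multiple of $\theta_t$, which is what rank one buys you. The gap is real but standard to close: run your argument at a very general $t$ to get slope-stability of $\cE_t$ there, then use that $\mu$-stability of the restrictions is a Zariski-open condition on $t$ (boundedness of potential destabilizers in the family), so the stable locus is open, nonempty, hence dense open in the irreducible $\PP^2$ --- which is the statement for generic $t$. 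A second, smaller, gloss: Jordan--H\"older factors are a priori only torsion free, while Proposition~\ref{discperp} is stated for vector bundles; since the discriminant identity forces every factor's discriminant integral to vanish, the factors are locally free (or pass to double duals, whose discriminant can only drop), so this is harmless.
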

The proof of Item~(1) is a straightforward computation, see Proposition~4.1 in~\cite{ogmodvb}. Here we give the proof of Item~(2), which involves Proposition~\ref{quadrato} and Proposition~\ref{modinv}.

\begin{proof}[Proof of Item~(2) of Proposition~\ref{propriostab}]
For $t\in\PP^2$ let $\cF_t:=\cF_{|X_t}$, $\cG_t:=\cG_{|X_t}$.
By Item~(1) of Proposition~\ref{propriostab} there exists an open dense $U\subset\PP^2$ such that for $t\in U$ the vector bundles  $\cF_t$ and $\cG_t$ are 
both slope-stable. We may assume that $X_t$ is smooth for every $t\in U$. By Proposition~\ref{resemi} it follows that $\cF_t$ and $\cG_t$ are simple semi-homogeneous vector bundles.  Let  $t\in U$. By Theorem~7.11 in~\cite{muksemi} the set
\begin{equation*}
V_t:=\{[\xi]\in X_t^{\vee} \mid \cF_t\cong \cG_t\otimes\xi\}
\end{equation*}
is not empty, and hence it has cardinality $r(\cG)^2$ by Proposition~\ref{quadrato}. Clearly $V_t$ is invariant under the monodromy action of $\pi_1(U,t)$. 
Now notice that $V_t\subset A_t[(mi)^2]$ because $\cF_t$ and $\cG_t$ have rank $(mi)^2$ and isomorphic determinants. 
Hence  by Proposition~\ref{modinv} we have $V_t=A[mi]$. Thus $0\in V_t$, and therefore $\cF_t\cong \cG_t$. 
\end{proof}

\subsection{Search for an analogue of Subsection~\ref{subsec:innocenzox}}\label{subsec:fallito}
\setcounter{equation}{0}
We describe a failed attempt to produce an analogue of Subsection~\ref{subsec:innocenzox}. In fact this is the stage in which we find the greatest differences between dimension $2$ and higher dimension. Let $\pi\colon X\to\PP^n$ be a Lagrangian fibration of a HK variety $X$ of dimension $2n\ge 4$. Say we wish to construct a modular vector bundle  $\cE$ on $X$ such that 
\begin{enumerate}
\item 
$H^2(X,End_0(\cE))=0$, so that $\cE$ extends to a vector bundle on every small deformation of $X$ keeping $c_1(\cE)$ of type $(1,1)$.
\item 
For a generic $t\in\PP^n$ the restriction $\cE_t:=\cE_{|X_t}$ is stable, so that $\cE$ itself is $h$ slope stable for an $a(\cE)$-suitable polarization $h$. 
\end{enumerate}
One might try to imitate what was done in Subsection~\ref{subsec:innocenzox}, i.e.~consider the generic fiber $\cX\to\CC(\PP^n)$. This is an abelian variety over $\CC(\PP^n)$, more precisely  a torsor over an abelian variety over $\CC(\PP^n)$. Hence the definition of semi homogeneous vector bundle on $\cX$  makes sense (we formulated the notion for abelian varieties in the "strict" sense, but it clearly makes sense for a torsor over an abelian variety). Suppose that $\cF$ is such a vector bundle on $\cX$.    There are many choices of a torsion free sheaf $\cE$ which restricts to $\cF$ on the generic fiber of $\pi\colon X\to\PP^n$. In choosing  such an $\cE$ we ask that Items(1) and~(2) above hold and moreover that $\cE$ is modular. 
Regarding Item~(1) we notice that  we are better off if $\cE$  restricts to  a simple sheaf on $X_t$ for $t$ outside a set of codimension at least $2$ in $\PP^n$. In fact suppose that this is not the case, and let $D\subset\PP^n$ be a prime divisor with the property that $H^0(X_t,\End_0(\cE_t))\not=0$ for all $t\in D$ (this is almost a strengthening of Item~(2) above). We claim that then also $H^1(X_t,\End_0(\cE_t))\not=0$ for all $t\in D$. In fact if $H^1(X_t,\End_0(\cE_t))=0$ for a generic $t\in D$ then
$H^1(X_t,\End_0(\cE_t))=0$ for  $t\in U$ where $U\subset \PP^n$ is an open subset intersecting $D$, and then it follows that $H^0(X_t,\End_0(\cE_t))=0$ for a generic  $t\in D$ by the properties of cohomology and base change, see Corollary 2, pp.~50-51 in~\cite{mumabvar}. 
 Thus  $H^1(X_t,\End_0(\cE_t))\not=0$ for all $t\in D$. It follows that $R^1\pi_{*}\End_0(\cE)$ is non zero (and supported on $D$).
 Since  $n\ge 2$ we have $\dim D>0$ and hence the cohomology group $H^1(\PP^n;R^1\pi_{*}\End_0(\cE))$ might very well be non zero; if that is the case then  $H^2(X,End_0(\cE))\not=0$ by the spectral sequence $H^p(\PP^n;R^q\pi_{*}\End_0(\cE))\Rightarrow H^{p+q}(X,End_0(\cE))$. Notice that if $n=1$ then a non vanishing $R^1\pi_{*}\End_0(\cE)$ contributes to $H^1(X,End_0(\cE))$, not $H^2(X,End_0(\cE))$. 
 
To sum up: Item~(1) suggests that we choose $\cE$ so that it restricts to  a simple sheaf on $X_t$ for $t$ outside a set of codimension at least $2$ in $\PP^n$. Thus if we start with  a random $\cE$ we should perform semistable reduction, as in Subsection~\ref{subsec:innocenzox}. After a finite number of modifications we will get a sheaf $\cE$ restricting to a stable (hence simple) vector bundle on $X_t$ for $t\in\PP^n$ outside a codimension $2$ set. However we do not have  control of the end product, in particular it  is not clear  whether it is modular. 
\subsection{Basic modular sheaves on the Hilbert square of a $K3$ surface}\label{subsec:dibase}
\setcounter{equation}{0}
We describe the first step towards an  analogue  of Subsection~\ref{subsec:innocenzox}. 
Let  $S$ be a smooth projective surface. Let 
$$\tau\colon X_n(S)\to S^n$$
 be the blow up of the big diagonal. The complement of the big diagonal in $S^n$ is identified with a dense open subset $U_n(S)\subset X_n(S)$.  By Proposition 3.4.2 in~\cite{haiman} the natural map $U_n(S)\to S^{[n]}$ extends to a regular map $p\colon X_n(S)\to S^{[n]}$. 
   We let $q_i\colon X_n(S)\to S$ be the composition of $\tau$ and the $i$-th projection $S^n\to S$. Given a locally free sheaf $\cF$ on $S$, let
\begin{equation*}
X_n(\cF):=q_1^{*}(\cF)\otimes\ldots\otimes q_n^{*}(\cF).
\end{equation*}
The action of the symmetric group $\cS_n$ on $S^n$ by permutation of the factors   lifts to  an action  $\rho_n\colon \cS_n\to\Aut(X_n(S))$.  The latter action lifts to a natural action $\rho_n^{+}$ on  $X_n(\cF)$. There is also a twisted  action $\rho_n^{-}=\rho_n^{+}\cdot\chi$ where  $\chi\colon\cS_n\to\{\pm 1\}$ is the sign character. Moreover  $\rho_n^{\pm}$  descends to an action  
$\bm{\rho}_n^{\pm}\colon \cS_n \to \Aut(p_{*}X_n(\cF))$ because it  maps to itself any fiber of  $p\colon X_n(S)\to S^{[n]}$. 
\begin{dfn}
Let $\cF^{\pm}[n]\subset p_{*}X_n(\cF)$ be the sheaf of $\cS_n$-invariants for $\bm{\rho}_n^{\pm}$.
\end{dfn}
The sheaf $\cF^{\pm}[n]$ is reflexive for any $n$, and is locally free for $n\le 2$. Of course $\cF^{\pm}[1]=\cF$ and $\cF^{\pm}[-1]=0$, hence the construction is interesting if $n\ge 2$. 
\begin{prp}\label{yaufever}
Let $S$ be a projective  $K3$ surface, and let $\cF$ be a locally free sheaf on $S$ such that $\chi(S,End\cF)=2$. 
Then $\cF[2]^{\pm}$ is a locally free modular sheaf of rank $r(\cF)^2$, with
\begin{eqnarray}
\Delta(\cF[2]^{\pm}) & = & \frac{r(\cF[2]^{\pm})(r(\cF[2]^{\pm})-1)}{12}c_2(S^{[2]}),  \label{disceffe} \\
d(\cF[2]^{\pm}) & = & 5\cdot {r(\cF[2]^{\pm})\choose 2}, \label{didieffe} \\
a(\cF[2]^{\pm}) & = & \frac{5}{8}r(\cF)^6(r(\cF)^2-1). \label{adieffebis}
\end{eqnarray}
\end{prp}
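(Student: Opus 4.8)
The plan is to reduce the whole statement to one degree-two Chern-class computation on $X:=S^{[2]}$ and then read off $\Delta$, $d$ and $a$ in turn.

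First the elementary points. Local freeness of $\cF[2]^{\pm}$ for $n=2$ has already been recorded, so only the rank remains. Away from the exceptional locus the map $p\colon X_2(S)\to X$ is étale of degree $2$, with fibre $\{(x,y),(y,x)\}$ over $\{x,y\}$ (for $x\ne y$); the corresponding stalk of $p_{*}X_2(\cF)$ is $(\cF_x\otimes\cF_y)\oplus(\cF_y\otimes\cF_x)$, on which $\cS_2$ swaps the two summands (up to the sign character $\chi$), so the invariants have dimension $r(\cF)^2$ and $r(\cF[2]^{\pm})=r(\cF)^2$. Next I would remove the dependence on the sign: twisting by the sign character amounts to twisting by $\cO_{X}(\pm\delta)$, where $\delta$ is the half-boundary class of $X$ (so that $2\delta$ is the branch divisor of $p$), whence $\cF[2]^{-}\cong\cF[2]^{+}\otimes\cO_{X}(-\delta)$. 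As tensoring by a line bundle leaves $\End$ unchanged, $\Delta$, $d$ and $a$ are the same for both signs, and I may work with $\cF[2]^{+}$. Finally, Riemann--Roch on the $K3$ surface gives $\chi(S,\End\cF)=2r(\cF)^2-\Delta(\cF)$, so the hypothesis $\chi(S,\End\cF)=2$ is exactly the rigidity relation $\Delta(\cF)=2(r(\cF)^2-1)$; this is what forces the final answer to depend on $\cF$ only through $r(\cF)$.

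The core is to evaluate $\int_{X}\Delta(\cF[2]^{+})\smile\beta_1\smile\beta_2$ for $\beta_1,\beta_2\in H^2(X)$, where $\Delta(\cF[2]^{+})=\ch_1^2-2r(\cF)^2\ch_2$ by~\eqref{dueversioni}. Since $\ch(X_2(\cF))=\tau^{*}\bigl(\pr_1^{*}\ch(\cF)\cdot\pr_2^{*}\ch(\cF)\bigr)$ is multiplicative, Grothendieck--Riemann--Roch for the finite map $p$ gives $\ch(p_{*}X_2(\cF))=p_{*}\bigl(\ch(X_2(\cF))\,\td(X_2(S))\bigr)\,\td(X)^{-1}$; combined with the twist relation in the form $\ch(p_{*}X_2(\cF))=\ch(\cF[2]^{+})(1+e^{-\delta})$ this isolates $\ch(\cF[2]^{+})$ up to degree two without equivariant localization. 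By bilinearity it suffices to treat $\beta_i\in\mu(H^2(S))$ and $\beta_i=\delta$: the $\mu$-classes reduce the push-forward to intersection numbers on $S$, while the $\delta$-contributions are controlled by the exceptional divisor $E=\PP(T_S)$ of $\tau$.

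I expect the outcome to be $\int_{X}\Delta(\cF[2]^{+})\smile\alpha\smile\alpha=5\binom{r(\cF)^2}{2}\,q_{X}(\alpha)$ for every $\alpha$, the terms quadratic in $c_1(\cF)$ collapsing precisely through the rigidity relation $\Delta(\cF)=2(r(\cF)^2-1)$. By Definition~\ref{effemod} this is exactly the assertion that $\cF[2]^{+}$ is modular with $d(\cF[2]^{+})=5\binom{r(\cF)^2}{2}$, which is~\eqref{didieffe}. Using Remark~\ref{deltanti} for the polarised form, the standard relation $\int_{X}c_2(X)\smile\beta_1\smile\beta_2=30\,q_{X}(\beta_1,\beta_2)$ for Type $K3^{[2]}$, and the fact that the cup-product map $\Sym^2 H^2(X)\to H^4(X)$ is an isomorphism (so that the intersection pairing on $H^4(X)$ is nondegenerate), one obtains $\Delta(\cF[2]^{+})=\tfrac{1}{30}d(\cF[2]^{+})\,c_2(X)=\tfrac{r(\cF)^2(r(\cF)^2-1)}{12}\,c_2(X)$, which is~\eqref{disceffe} and re-proves modularity through Remark~\ref{caroverb}. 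Finally~\eqref{adieffebis} is immediate from Definition~\ref{adieffe}: since $c_X=1$ for Type $K3^{[2]}$, $a(\cF[2]^{+})=\tfrac14 r(\cF)^4\cdot 5\binom{r(\cF)^2}{2}=\tfrac58 r(\cF)^6(r(\cF)^2-1)$.

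The main obstacle is the push-forward bookkeeping in the middle step: computing $p_{*}$ of the degree-two part of $\ch(X_2(\cF))\td(X_2(S))$, or equivalently reducing it to intersection numbers on $X_2(S)$ and $S^2$, while correctly accounting for the exceptional divisor $E$ and the ramification of $p$ along it. The delicate feature is that the terms quadratic in $c_1(\cF)$ must cancel against the $\Delta(\cF)$-terms, so that the result is a fixed multiple of $c_2(X)$ independent of $c_1(\cF)$; this cancellation is the heart of the statement and is exactly where the hypothesis $\chi(S,\End\cF)=2$ enters.
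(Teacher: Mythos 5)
Your reduction of the minus case to the plus case is where the argument breaks: the claimed isomorphism $\cF[2]^{-}\cong\cF[2]^{+}\otimes\cO_{X}(-\delta)$ is false for $r(\cF)\ge 2$. The projection-formula heuristic behind it (twisting the linearization by the sign character twists the descended sheaf by $\cO_X(-\delta)$) is valid only for sheaves of the form $p^{*}\cG$ with the pullback linearization, and $X_2(\cF)$ is not pulled back from $S^{[2]}$. Along the exceptional divisor $E$ the fibre $\cF_x\otimes\cF_x$ splits as $\Sym^2\cF_x\oplus\bw{2}\cF_x$, and the two isotypic pieces enter invariants and anti-invariants with \emph{different} multiplicities. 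Concretely, multiplication by the canonical section of $\cO(E)=p^{*}\cO_X(\delta)$ gives an injection $\cF[2]^{+}\otimes\cO_X(-\delta)\hra\cF[2]^{-}$ whose cokernel is supported on the Hilbert--Chow boundary divisor (of class $2\delta$) with rank $\binom{r(\cF)}{2}$ along it; hence $c_1(\cF[2]^{+})-c_1(\cF[2]^{-})=r(\cF)\,\delta$, not $r(\cF)^2\delta$, and since $\delta$ is primitive \emph{no} line-bundle twist can relate the two sheaves. (This is confirmed by the paper's own formulas: by \eqref{accapiumeno}, Proposition~\ref{rosetta} and the remark following it, $c_1(\cF[2]^{+})=r_0\mu(c_1(\cF))-\binom{r_0}{2}\delta$ while $c_1(\cF[2]^{-})=r_0\mu(c_1(\cF))-\binom{r_0+1}{2}\delta$.) Consequently your key identity $\ch(p_{*}X_2(\cF))=\ch(\cF[2]^{+})(1+e^{-\delta})$ is also false: GRR for the finite map $p$ only gives the sum $\ch(\cF[2]^{+})+\ch(\cF[2]^{-})$, and dividing by $(1+e^{-\delta})$ does not isolate $\ch(\cF[2]^{+})$ --- the discrepancy is a boundary class that contaminates exactly the degrees ($\ch_1$ and $\ch_2$) entering $\Delta$, $d$ and $a$.

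Even setting this aside, the heart of the proposition --- the actual evaluation of $\int_X\Delta(\cF[2]^{\pm})\smile\alpha^2$ --- is not carried out: you state what you ``expect the outcome to be'' and flag the push-forward bookkeeping as the main obstacle, but that bookkeeping \emph{is} the proof (the paper proves the statement by exactly such an explicit computation, see Proposition~5.2 of~\cite{ogmodvb}). To repair your scheme you need a second relation separating the two isotypic summands: either equivariant (Lefschetz) Riemann--Roch with the fixed-locus contribution along $E$, or, more elementarily, the evaluation sequences $0\to p^{*}\cF[2]^{\pm}\to X_2(\cF)\to i_{E*}W^{\mp}\to 0$, where $W^{-},W^{+}$ are bundles on $E$ of ranks $\binom{r(\cF)}{2}$ and $\binom{r(\cF)+1}{2}$, combined with injectivity of $p^{*}$ on cohomology; this computes $\ch(\cF[2]^{+})$ and $\ch(\cF[2]^{-})$ separately, for both signs, and in particular shows that the equality of their discriminants is a (checkable) outcome, not something one may assume. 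The parts of your argument downstream of the computation are fine: the rank count, the translation of $\chi(S,End\cF)=2$ into $\Delta(\cF)=2(r(\cF)^2-1)$ via Riemann--Roch and \eqref{dueversioni}, and the derivations of \eqref{disceffe} from \eqref{didieffe} (using $\int_X c_2(X)\smile\alpha^2=30\,q_X(\alpha)$ and nondegeneracy of the pairing on $H^4$) and of \eqref{adieffebis} from Definition~\ref{adieffe} with $c_X=1$.
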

Proposition~\ref{yaufever} is proved via an explicit computation, for details see Proposition~5.2 in~\cite{ogmodvb}. 
\begin{rmk}
In a recent preprint~\cite{markmod} E.~Markman proved that $\cF[n]^{\pm}$ is modular for all $n$. The same paper contains other very interesting constructions of modular sheaves on HK varieties.
\end{rmk}
We recall that a  locally free sheaf $\cF$ on $S$  is spherical 
if $h^p(S,End^0(\cF))=0$ for all $p$. Notice that if $\cF$ is spherical then $\chi(S,End\cF)=2$ because $End\cF=End^0(\cF)\oplus \cO_S$. 
\begin{prp}\label{prp:seirigido}
Let $S$ be a projective  $K3$ surface. Let $\cF$ be a locally free sheaf on $S$ which is spherical, 
i.e.~such that $h^p(S,End^0(\cF))=0$ for all $p$,  where $End^0(\cF)\subset End(\cF)$ is the  subsheaf of traceless endomorphisms. Then for all $p$ we have
\begin{equation}
h^p(S^{[2]},End^0(\cF[2]^{\pm})) =0.
\end{equation}
\end{prp}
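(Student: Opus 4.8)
The plan is to deduce the vanishing of $H^{p}(S^{[2]},\End^0(\cF[2]^{\pm}))$ from a comparison of $\End(\cF[2]^{\pm})$ with the structure sheaf. Since $\cF[2]^{\pm}$ is locally free, the trace splitting $\End(\cF[2]^{\pm})=\End^0(\cF[2]^{\pm})\oplus\cO_{S^{[2]}}$ reduces the statement to the equalities of dimensions $h^{p}(S^{[2]},\End(\cF[2]^{\pm}))=h^{p}(S^{[2]},\cO_{S^{[2]}})$ for all $p$, the right-hand side being $1,0,1,0,1$ for $p=0,\dots,4$ (the Hodge numbers $h^{0,p}$ of a HK fourfold of Type $K3^{[2]}$). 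Since $\End^0$ is self-dual under the trace pairing and $K_{S^{[2]}}$ is trivial, Serre duality gives $h^{p}=h^{4-p}$ for $\End^0(\cF[2]^{\pm})$, so it would even suffice to treat $p=0,1,2$.

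First I record two identifications. Write $\cV:=X_2(\cF)$ and $\cG:=\End\cF$. The sign character entering $\rho_2^{-}$ occurs once in $\cV$ and once in $\cV^{\vee}$, so it cancels in the endomorphism bundle; hence, as $\cS_2$-equivariant bundles on $X_2(S)$,
\begin{equation*}
\End(X_2(\cF))=X_2(\cF)\otimes X_2(\cF)^{\vee}=q_1^{*}\cG\otimes q_2^{*}\cG=X_2(\cG),
\end{equation*}
with the natural action, independently of the sign $\pm$. The crucial point I would then establish is the identification of locally free sheaves on $S^{[2]}$,
\begin{equation*}
\End(\cF[2]^{\pm})\cong \bigl(p_{*}X_2(\cG)\bigr)^{\cS_2},
\end{equation*}
the invariants being taken for the natural action. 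Granting this, because $p$ is finite and $|\cS_2|$ is invertible the functors $p_{*}$ and $\cS_2$-invariants are exact and commute with cohomology, so that $H^{*}(S^{[2]},\End(\cF[2]^{\pm}))=H^{*}(X_2(S),X_2(\cG))^{\cS_2}$.

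Next I would push the computation down the blow-up $\tau\colon X_2(S)\to S^2$. As $q_i=\pr_i\circ\tau$ we have $X_2(\cG)=\tau^{*}(\pr_1^{*}\cG\otimes\pr_2^{*}\cG)$, and since $\tau$ is the blow-up of the smooth codimension-$2$ diagonal one has $R\tau_{*}\cO_{X_2(S)}=\cO_{S^2}$; the projection formula and the Künneth theorem then give, $\cS_2$-equivariantly,
\begin{equation*}
H^{*}(X_2(S),X_2(\cG))=H^{*}(S^2,\pr_1^{*}\cG\otimes\pr_2^{*}\cG)=H^{*}(S,\cG)\otimes H^{*}(S,\cG),
\end{equation*}
where $\cS_2$ acts by the swap $\alpha\otimes\beta\mapsto(-1)^{|\alpha||\beta|}\beta\otimes\alpha$, the Koszul sign being the one produced by the geometric swap of the two factors of $S^2$. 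Here sphericity of $\cF$ enters: $H^{*}(S,\cG)=H^{*}(S,\End^0\cF)\oplus H^{*}(S,\cO_S)=H^{*}(S,\cO_S)$, which is one-dimensional in degrees $0$ and $2$ and zero otherwise. Writing $a,b$ for generators in degrees $0$ and $2$, the signed-swap invariants of $H^{*}(S,\cG)^{\otimes 2}$ are spanned by $a\otimes a$, $a\otimes b+b\otimes a$ and $b\otimes b$, in degrees $0,2,4$; thus $h^{p}=1,0,1,0,1$, matching $h^{p}(\cO_{S^{[2]}})$ and yielding $H^{p}(S^{[2]},\End^0(\cF[2]^{\pm}))=0$ for all $p$. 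The same computation covers both signs, consistently with the statement.

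The main obstacle is the global identification $\End(\cF[2]^{\pm})\cong(p_{*}X_2(\cG))^{\cS_2}$. Over the locus where $p$ is étale, i.e. the complement of the exceptional divisor $E$ of $S^{[2]}\to S^{(2)}$, it holds by descent, since there $\End$ and the passage to $\cS_2$-invariants commute. The real content is to check that the natural comparison map, a morphism of locally free sheaves of equal rank $r(\cF)^4$ that is an isomorphism off $E$, stays an isomorphism along $E$; equivalently, that forming $\End$ of the invariants agrees with taking invariants of $X_2(\cG)$ across the ramification divisor. This is a local question on the blow-up, to be settled by writing $\cV$ and its $\cS_2$-invariants in explicit coordinates near $E$, using that $\cS_2$ acts by $-1$ on the normal directions to the diagonal and trivially on $E$. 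This local verification is the explicit computation underlying the statement.
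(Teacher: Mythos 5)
Your trace-splitting reduction and your K\"unneth computation downstairs are both sound --- the latter is essentially the same calculation that ends the paper's argument. The gap is exactly the step you defer to a ``local verification'': the identification $\End(\cF[2]^{\pm})\cong\bigl(p_{*}X_2(\End\cF)\bigr)^{\cS_2}$ is \emph{false} whenever $r(\cF)\ge 2$, so that verification cannot succeed. Work in the standard model near a point of the exceptional divisor $E\subset X_2(S)$: coordinates $(u,w,s,t)$ with involution $\sigma(u)=-u$, quotient coordinates $(U,w,s,t)$, $U=u^2$. In characteristic $0$ an equivariant bundle splits, near the fixed locus, into trivial and sign isotypic pieces $(\cO\otimes V_{+})\oplus(\cO\otimes V_{-})$; for $X_2(\cF)$ with the action $\rho_2^{+}$ the fiber at a point of $E$ over $(x,x)$ is $\cF_x\otimes\cF_x$ with $\sigma$ acting by the swap, so $V_{+}=\Sym^2\cF_x$ and $V_{-}=\bw{2}\cF_x$, both nonzero as soon as $r=r(\cF)\ge 2$. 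Then $(p_{*}X_2\cF)^{\cS_2}$ is free with basis $\{v_i^{+}\}\cup\{u\,v_j^{-}\}$, and the natural comparison map $\bigl(p_{*}\End X_2(\cF)\bigr)^{\cS_2}\to\End\bigl((p_{*}X_2\cF)^{\cS_2}\bigr)$ (invariant endomorphisms act on invariant sections; there is no other natural map) is the identity on the three blocks $\Hom(V_{+},V_{+})$, $\Hom(V_{-},V_{-})$, $\Hom(V_{+},V_{-})$, but is multiplication by $U$ on the block $\Hom(V_{-},V_{+})$: the invariant section $u\psi$ with $\psi\in\Hom(V_{-},V_{+})$ sends $u v_j^{-}$ to $u^2\psi(v_j^{-})=U\,\psi(v_j^{-})$. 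So the map degenerates along $E$, with cokernel $Q$ a sheaf on $E$ of rank $\tfrac{r(r+1)}{2}\cdot\tfrac{r(r-1)}{2}$. No abstract isomorphism exists either: from the exact sequence $0\to\bigl(p_{*}X_2(\End\cF)\bigr)^{\cS_2}\to\End(\cF[2]^{\pm})\to Q\to 0$ one gets $c_1\bigl((p_{*}X_2(\End\cF))^{\cS_2}\bigr)=-\tfrac{r(r+1)}{2}\cdot\tfrac{r(r-1)}{2}\,[E]\neq 0$, whereas $c_1$ of any endomorphism bundle vanishes. (The same computation applies to $\rho_2^{-}$, since the sign character cancels in $\End$.)

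It is true --- and consistent with the above --- that your final dimensions $1,0,1,0,1$ for $H^{*}\bigl(X_2(S),X_2(\End\cF)\bigr)^{\cS_2}$ agree with the correct dimensions of $\Ext^{*}(\cF[2]^{\pm},\cF[2]^{\pm})$; but this agreement is precisely what your argument does not prove. To repair it along your lines you would have to identify $Q$ globally (it is the pullback of $\Hom(\bw{2}\cF,\Sym^2\cF)$ to $E=\PP(T_S)$ twisted by a line bundle whose degree on the $\PP^1$-fibers has to be pinned down) and then show $H^{*}(S^{[2]},Q)=0$; that correction along $E$ is the real content of the statement. The paper sidesteps it by working at the derived level: by the McKay correspondence of Bridgeland--King--Reid and Haiman, the equivalence $D^b(S^{[2]})\simeq D^b_{\cS_2}(S\times S)$ takes $\cF[2]^{\pm}$ to $\cF^{\boxtimes 2}$ with its (possibly sign-twisted) equivariant structure, whence $\Ext^{*}(\cF[2]^{\pm},\cF[2]^{\pm})\cong\Ext^{*}_{\cS_2}(\cF^{\boxtimes 2},\cF^{\boxtimes 2})\cong\Sym^2\bigl(\Ext^{*}(\cF,\cF)\bigr)$, and sphericity finishes. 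Your proposal in effect replaces this derived equivalence by an underived sheaf isomorphism, and it is exactly that replacement which fails.
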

Proposition\ref{prp:seirigido} follows from the McKay correspondence proved by Haiman and Bridgeland-King-Reid: in fact the McKay correspondence gives that
\begin{equation}\label{unduetre}
\Ext^{*}(\cF^{\pm},\cF^{\pm})\cong \Ext^{*}(\cF^{\boxtimes 2},\cF^{\boxtimes 2})^{\ZZ/(2)}\cong
\Sym^2\left(\Ext^{*}(\cF,\cF)\right).
\end{equation}
For the details  see Proposition~5.4 in~\cite{ogmodvb}. 
Below is a remarkable consequence of Proposition~\ref{prp:seirigido} 
(one applies the main result of~\cite{man-iac-pairs}).
\begin{crl}\label{iacman}
Keep hypotheses as in Proposition~\ref{acca20}. Then the natural map between deformation spaces  $\Def(S^{[2]},\cF[2]^{\pm})\lra \Def(S^{[2]},\det\cF[2]^{\pm})$ is smooth.
\end{crl}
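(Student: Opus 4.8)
The plan is to read the determinant map as a morphism of deformation functors whose relative obstruction space is $H^2(S^{[2]},\End^0(\cF[2]^{\pm}))$, and then to annihilate that space by means of Proposition~\ref{prp:seirigido}. Throughout I write $\cG:=\cF[2]^{\pm}$ for brevity, and I use that, since $n=2$, the sheaf $\cG$ is locally free, so that $\Ext^i(\cG,\cG)=H^i(S^{[2]},\End(\cG))$ for every $i$.

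First I would record the trace decomposition. For a locally free sheaf $\cG$ the trace map splits $H^i(S^{[2]},\End(\cG))=H^i(S^{[2]},\End^0(\cG))\oplus H^i(S^{[2]},\cO_{S^{[2]}})$, the second summand being $\Ext^i(\det\cG,\det\cG)$ under the canonical identification $\mathcal{H}om(\det\cG,\det\cG)=\cO_{S^{[2]}}$. Under this splitting the deformations of the pair $(S^{[2]},\cG)$ that leave the determinant unchanged are governed by the traceless summands $H^1(\End^0\cG)$ and $H^2(\End^0\cG)$, while the morphism to $\Def(S^{[2]},\det\cG)$ records precisely the complementary $\cO_{S^{[2]}}$-part; this is the infinitesimal shadow of the assertion that the determinant morphism is smooth.

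Next I would invoke the main result of~\cite{man-iac-pairs}, exactly as it was used in the proof of Claim~\ref{reliscio}: it equips the morphism of deformation functors $\Def(X,\cG)\to\Def(X,\det\cG)$, for $\cG$ a locally free sheaf on a smooth projective variety $X$, with a relative obstruction theory whose obstruction space is the traceless group $H^2(X,\End^0(\cG))$. Consequently this morphism is smooth as soon as $H^2(X,\End^0(\cG))=0$. Taking $X=S^{[2]}$ and $\cG=\cF[2]^{\pm}$, Proposition~\ref{prp:seirigido} gives $h^p(S^{[2]},\End^0(\cF[2]^{\pm}))=0$ for all $p$, so in particular the relevant group in degree $p=2$ vanishes, and smoothness of the determinant morphism follows at once.

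The one point that truly deserves care, and which I regard as the crux, is the identification of the relative obstruction space with $H^2(S^{[2]},\End^0(\cG))$ rather than with the full $\Ext^2(\cG,\cG)$: it is precisely here that the determinant must be held fixed and that one appeals to the functorial pairs formalism of~\cite{man-iac-pairs}. Once that identification is granted, the vanishing furnished by Proposition~\ref{prp:seirigido} closes the argument with no further computation.
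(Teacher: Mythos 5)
Your proposal is correct and follows essentially the same route as the paper, which likewise obtains the corollary by applying the main result of~\cite{man-iac-pairs} to the morphism $\Def(S^{[2]},\cF[2]^{\pm})\to\Def(S^{[2]},\det\cF[2]^{\pm})$ and using Proposition~\ref{prp:seirigido} to annihilate the relative obstruction space $H^2(S^{[2]},\End^0(\cF[2]^{\pm}))$. The trace splitting and the identification of the relative obstruction space with the traceless $H^2$, which you rightly flag as the crux, are exactly what the cited result of Iacono--Manetti supplies.
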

\subsection{Existence of stable modular vector bundles  on Lagrangian HK's}\label{subsec:fossanova}
\setcounter{equation}{0}
We describe the second step towards an analogue  of Subsection~\ref{subsec:innocenzox}. The starting point is  an elliptic $K3$ surface $S\to\PP^1$.
\begin{dfn}\label{laghilb}
If $S\to\PP^1$ is an elliptic $K3$ surface,  the \emph{associated Lagrangian fibration} is the   composition 
\begin{equation}
S^{[2]}\to S^{(2)}\to (\PP^1)^{(2)}\cong \PP^2.
\end{equation}
\end{dfn}
We consider  vector bundles $\cF^{\pm }[2]$ on $S^{[2]}$  associated to a vector bundle $\cF$ on $S$ which restricts to a stable vector bundle on every elliptic fiber. To be precise we choose the elliptic $K3$ as in the following claim (which follows from surjectivity of the period map for $K3$ surfaces). 
\begin{clm}\label{eccoell}
Let $m_0,d_0$ be positive natural numbers. There exist $K3$ surfaces $S$  with an elliptic fibration $S\to \PP^1$ such that 
\begin{equation}\label{neronsevero}
H^{1,1}_{\ZZ}(S)=\ZZ[D]\oplus\ZZ[C], \quad
D\cdot D=2m_0,\quad D\cdot C=d_0.
\end{equation}
\end{clm}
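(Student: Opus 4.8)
The plan is to realize the prescribed intersection form as the Néron–Severi lattice of a suitable $K3$ surface via surjectivity of the period map, and then to extract the elliptic fibration from an isotropic class. Since $[C]$ is to be the class of a fiber of the fibration $S\to\PP^1$, I take $C\cdot C=0$, so the sought lattice is the even rank-$2$ lattice $M$ with Gram matrix $\left(\begin{smallmatrix} 2m_0 & d_0\\ d_0 & 0\end{smallmatrix}\right)$ in the basis $\{D,C\}$. Its discriminant is $-d_0^2<0$ and it represents the positive value $2m_0$, so $M$ has signature $(1,1)$, exactly what is needed for the Picard lattice of a projective $K3$ of Picard number $2$.

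First I would embed $M$ primitively into the $K3$ lattice $\Lambda:=U^{3}\oplus E_8(-1)^{2}$. This can be done by hand: writing two hyperbolic summands as $U_1=\langle e,g\rangle$ and $U_2=\langle e',g'\rangle$ with $e^2=g^2=(e')^2=(g')^2=0$ and $e\cdot g=e'\cdot g'=1$, the assignment $C:=e$, $D:=d_0 g+e'+m_0 g'$ gives $C^2=0$, $D\cdot C=d_0$ and $D^2=2m_0$. The sublattice $\ZZ D\oplus\ZZ C$ is primitive, since for $xD+yC$ the coefficients of $e'$ and of $e$ are exactly $x$ and $y$, so any common divisor divides both coordinates. (Alternatively one invokes Nikulin's criterion, trivially met because $\Lambda$ is unimodular of signature $(3,19)$ and $M^{\perp}$ would have signature $(2,18)$.)

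Next I would apply the surjectivity of the period map together with the Torelli theorem. Fixing a marking and choosing a period point $\omega\in\PP(M^{\perp}\otimes\CC)$ with $\omega\cdot\omega=0$, $\omega\cdot\bar\omega>0$, and generic inside $M^{\perp}$ — so that $\{x\in\Lambda\mid x\cdot\omega=0\}=M$ — one obtains a $K3$ surface $S$ with $\NS(S)\cong M$. Because $\NS(S)$ contains $D$ with $D^2=2m_0>0$, the lattice is not negative definite and hence $S$ is projective.

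The hard part will be passing from the isotropic class $C$ to an honest elliptic fibration. Here I would use the standard theory of nef isotropic classes on $K3$ surfaces: after composing the marking with an element of the Weyl group generated by reflections in the effective $(-2)$-classes, I may assume that $C$ is nef. A nef primitive isotropic class on a projective $K3$ is base-point free, and since $\chi(\cO_S(C))=2+\tfrac12 C^2=2$ by Riemann–Roch, the pencil $|C|\cong\PP^1$ defines a morphism $\pi\colon S\to\PP^1$ whose general fiber is a smooth genus-$1$ curve, i.e.\ an elliptic fibration with fiber class $C$. The only genuine subtlety is this adjustment by $(-2)$-reflections, which may be forced because $M$ can truly contain $(-2)$-classes depending on the divisibility relations between $m_0$ and $d_0$, together with the possible relabeling of $D$ it induces; in any case the resulting lattice is still isometric to $M$, so the stated intersection numbers $D\cdot D=2m_0$, $D\cdot C=d_0$ (and $C\cdot C=0$) hold after renaming the generators.
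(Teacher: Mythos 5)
Your proof is correct and takes essentially the same approach as the paper, which disposes of this claim with the single parenthetical remark that it ``follows from surjectivity of the period map for $K3$ surfaces''. Your write-up is the standard fleshing-out of exactly that remark: an explicit primitive embedding of the rank-two lattice into $U^{3}\oplus E_8(-1)^{2}$, a very general period point in $M^{\perp}$ to force $\NS(S)\cong M$, and the Weyl-group/Saint-Donat step turning the nef primitive isotropic class into an elliptic fibration, with the harmless renaming of generators at the end.
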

Notice that every elliptic fiber on a $K3$ surface as above is irreducible and therefore  slope-stability of a sheaf on a fiber is well defined, i.e.~independent of the choice of a polarization.

The vector bundle $\cF$ on $S$ is chosen to be as in the proposition below.
\begin{prp}\label{rigsuk}
Let $m_0,r_0,s_0$ be positive integers such that  
 $m_0+1=r_0s_0$. Suppose that $d_0$ is an integer coprime to $r_0$, and that
 \begin{equation}\label{digrande}
d_0>\frac{(2m_0+1)r_0^2(r_0^2-1)}{4}.
\end{equation}
Let $S$ be an elliptic $K3$ surface as in Claim~\ref{eccoell}.
 Then there exists a vector bundle $\cF$ on $S$ such that the following hold:
 \begin{enumerate}
\item
 $v(\cF)=(r_0,D,s_0)$,
\item
$\chi(End\cF)=2$,
\item
$\cF$  is $h$ slope-stable for any polarization $h$ of $S$, 
\item
and the restriction of $\cF$ to every elliptic fiber  is slope-stable. 
\end{enumerate}
\end{prp}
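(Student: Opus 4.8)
The plan is to first construct a vector bundle on the generic elliptic fiber with the prescribed numerical invariants, then spread it out over $S$ and arrange global stability. The Mukai vector $\vv=(r_0,D,s_0)$ has square $\vv^2 = D^2 - 2r_0 s_0 = 2m_0 - 2r_0 s_0 = 2m_0 - 2(m_0+1) = -2$ by the hypothesis $m_0+1=r_0 s_0$; this is exactly the spherical value, which is what we want for Item~(2). Indeed, if $\cF$ is $h$ slope-stable (hence simple) with $v(\cF)=\vv$ and $\vv^2=-2$, then by Theorem~\ref{dimemme} and the fact that $\dim\Ext^1(\cF,\cF)=\vv^2+2=0$, the bundle $\cF$ is rigid; since $\End\cF=\End^0(\cF)\oplus\cO_S$ and $\chi(\End\cF)=\vv^2+2+2\cdot 0 = 2$ (more directly, $\chi(\End\cF)=2-\langle\vv,\vv\rangle=2-(-2)=... $ wait, $\chi(\cF,\cF)=-\langle\vv,\vv\rangle=2$), Item~(2) follows at once. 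So the entire content is to produce a bundle with $v(\cF)=\vv$ satisfying the two stability statements (3) and (4).

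For the construction, I would follow the strategy of Subsection~\ref{subsec:innocenzox} and Proposition~\ref{tondo} applied to the elliptic surface $S$ of Claim~\ref{eccoell}, but now with the roles of the divisors arranged so that the fiber-restriction has the correct degree. The divisibility hypothesis is that $d_0=D\cdot C$ is coprime to $r_0$; since an elliptic fiber is a multiple of $C$ (in the notation of~\eqref{neronsevero} the fiber class $f$ is proportional to $C$), the restriction of a bundle with $c_1=D$ to a fiber has degree coprime to the rank $r_0$, so by Atiyah's Theorem~\ref{fibell} there is a unique stable bundle on each smooth fiber with that rank and determinant. First I would build over the generic fiber $\cC/\CC(\PP^1)$ the unique stable rank-$r_0$ bundle with the right determinant (exactly as in Subsection~\ref{subsec:innocenzox}), spread it to a sheaf $\cE$ on $S$, take the double dual to make it locally free, and then run the elementary-modification algorithm of Proposition~\ref{tondo} to make the restriction stable on \emph{every} fiber, obtaining Item~(4). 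The termination of the algorithm uses $v^2\ge -2$ and that each modification strictly decreases $v^2$; here the numerical inequality~\eqref{digrande} on $d_0$ is what guarantees there is enough ``room'' (it controls the $a(\vv)$-chamber structure, ensuring that a suitable polarization exists and that the target square $-2$ is reached).

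Item~(3), slope-stability for \emph{any} polarization $h$, is the point where the genus-one geometry does the work: since $\rho(S)=2$ with $\NS(S)=\ZZ[D]\oplus\ZZ[C]$ and the fiber class represents $0$, Lemma~\ref{comesup} together with the coprimality of $r_0$ and $d_0=D\cdot C$ forces any hypothetical destabilizing subsheaf $\cE\subset\cF$ to satisfy $r_0\, c_1(\cE)=r(\cE)\,D$, and intersecting with the fiber class contradicts $\gcd\{r_0,d_0\}=1$ exactly as in Subsection~\ref{subsec:algardi} and Lemma~\ref{solostab}. Because a bundle restricting to a stable bundle on a generic fiber is slope-stable for every $a(\vv)$-suitable polarization, and because with Picard number $2$ the chamber adjacent to $f$ accounts for all suitable polarizations (Remark~\ref{opportuno}), the independence of $h$ is immediate. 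The main obstacle I anticipate is bookkeeping rather than conceptual: one must verify that the elementary modifications preserve $c_1=D$ (not merely $D+xf$) and the precise Mukai vector $(r_0,D,s_0)$, which requires, as in the genus-zero-dimensional case of Proposition~\ref{enerzero}, twisting back by an appropriate $\pi^*\cL_0$ and checking via the isometry $e^{\alpha}$ that the final bundle lands on $\vv$ exactly. Once that accounting is done, Items~(1)--(4) hold simultaneously.
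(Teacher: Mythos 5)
Your existence argument for Items~(1), (2) and (4) is sound and is in substance the paper's own route: the paper simply cites Proposition~\ref{enerzero} (Subsection~\ref{subsec:renatozero}) together with the twisting trick of Subsection~\ref{subsec:torcere}, which is exactly the construction you spell out (Atiyah's bundle on the generic fiber, spreading out, double dual, elementary modifications, then twisting by a pull-back line bundle to land on $\vv=(r_0,D,s_0)$). The computation $\vv^2=-2$ and the deduction $\chi(\End\cF)=\chi(\cF,\cF)=-\la\vv,\vv\ra=2$ are also correct.

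The genuine gap is in Item~(3). What your argument proves is that $\cF$ is slope-stable for every $a(\vv)$-\emph{suitable} polarization, i.e.\ for polarizations in the single open $a(\vv)$-chamber whose closure contains the fiber class (Remark~\ref{opportuno}). But Item~(3) asserts stability for \emph{every} polarization of $S$, and with Picard number $2$ there may a priori be several $a(\vv)$-walls and hence several chambers; your argument says nothing about polarizations lying in the other chambers or on the walls. Relatedly, your claim that Lemma~\ref{comesup} together with coprimality ``forces any hypothetical destabilizing subsheaf to satisfy $r_0\,c_1(\cE)=r(\cE)\,D$'' is not correct as stated: that conclusion comes from Proposition~\ref{propsemi} (which uses modularity and Bogomolov's inequality), and it only yields $-a(\vv)\le q_S(\lambda_{\cE,\cF})\le 0$; to upgrade this to $\lambda_{\cE,\cF}=0$ one must know that \emph{no} class of $\NS(S)$ has square in the interval $[-a(\vv),0)$. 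That is precisely where \eqref{digrande} enters --- not, as you write, to guarantee ``that a suitable polarization exists and that the target square $-2$ is reached'' (suitable polarizations exist for any $a$ by Remark~\ref{opportuno}, and termination of the elementary-modification algorithm needs only Theorem~\ref{dimemme}). The paper applies Lemma~\ref{nocamere} to $\NS(S)=\ZZ[D]\oplus\ZZ[C]$ (discriminant $-d_0^2$, $\alpha=C$, $\beta=D$, $q_S(\beta)=2m_0$): every $\gamma\in\NS(S)$ of negative square satisfies
\begin{equation*}
q_S(\gamma)\le -\frac{2d_0}{2m_0+1}<-\frac{r_0^2(r_0^2-1)}{2}=-a(\vv),
\end{equation*}
the last inequality being exactly \eqref{digrande}. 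Hence there are \emph{no} $a(\vv)$-walls at all, the ample cone is a single chamber, every polarization is $a(\vv)$-suitable, and stability for one polarization propagates to all of them by Proposition~\ref{campol}(2) (equivalently, by Propositions~\ref{polint} and~\ref{propsemi}). Without this application of Lemma~\ref{nocamere} --- which you never invoke --- the phrase ``for any polarization $h$'' in Item~(3), which is what the later arguments of the paper rely on, is not established.
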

\begin{proof}
Let $\vv$ be the Mukai vector $\vv=(r_0,D,s_0)$. Then $\vv^2=-2$ because $m_0+1=r_0s_0$.  By Lemma~\ref{nocamere} and~\eqref{digrande}   there is no $a(\vv)$-wall, and hence all polarizations of $S$ are $a(\vv)$-suitable. 
Since $d_0$ is  coprime to $r_0$ the moduli space $\cM_{\vv}(S,h)$ is a singleton $\{[\cF]\}$  for every  polarization $h$ of $S$. Items~(1)-(3) follow at once. Moreover by 
Subsections~\ref{subsec:torcere} and~\ref{subsec:renatozero} the restriction of $\cF$ to every elliptic fiber  is slope-stable, i.e.~Item~(4) holds. 
\end{proof}
Let $S$ and $\cF$ be as above, and let $\pi\colon S^{[2]}\to\PP^2$ be the associated Lagrangian fibration. Recall that $\PP^2$ parametrizes elements of the symmetric square $(\PP^1)^{(2)}$. Let $x_1+x_2$ be such an element, where $x_1\not= x_2\in\PP^1$. Then $\pi^{-1}(x_1+x_2)$ is isomorphic to $C_{x_1}\times C_{x_2}$, and
\begin{equation}\label{biancolatte}
\cF[2]^{\pm}_{|\pi^{-1}(x_1+x_2)}\cong \cF_{x_1}\boxtimes \cF_{x_2}. 
\end{equation}
Since $\cF_{x_1}, \cF_{x_2}$ are slope stable, it follows that the above vector bundle is slope stable. 
Hence the modular vector bundle $\cF[2]^{\pm}$ restricts to a slope stable vector bundle on a generic fiber of the Lagrangian fibration 
$\pi\colon S^{[2]}\to\PP^2$. Since $a(\cF)$-suitable polarizations exist, this gives a large set of examples of slope stable modular vector bundles on $S^{[2]}$.  The following is a key result.
\begin{prp}\label{acca20}
Let $S$ be a $K3$ surface with an elliptic fibration $S\to \PP^1$ as in Claim~\ref{eccoell}, and  let $\cF$ be a vector bundle on $S$ as in Proposition~\ref{rigsuk}. Then
the restriction of $\cF[2]^{\pm}$ to every fiber of the associated Lagrangian fibration $\pi\colon S^{[2]}\to\PP^2$ is simple.
\end{prp}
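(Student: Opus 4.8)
The plan is to reduce the statement to a fibrewise computation of endomorphisms and to carry it out uniformly by transporting the problem to $S\times S$ through the McKay--Bridgeland--King--Reid (BKR) equivalence already invoked in~\eqref{unduetre}. Since $\cF[2]^{\pm}$ is locally free, simplicity of its restriction to a fibre $X_t:=\pi^{-1}(t)$ amounts to the equality $\Hom_{X_t}(\cF[2]^{\pm}|_{X_t},\cF[2]^{\pm}|_{X_t})=\CC$ for every $t\in\PP^2$. Here $\PP^2=(\PP^1)^{(2)}$, and I first record the structure of the companion fibration $\pi'\colon S\times S\xrightarrow{\rho\times\rho}\PP^1\times\PP^1\to(\PP^1)^{(2)}=\PP^2$: over a point $x_1+x_2$ with $x_1\neq x_2$ its fibre is the disjoint union $(C_{x_1}\times C_{x_2})\sqcup(C_{x_2}\times C_{x_1})$ interchanged by $\cS_2$, while over a diagonal point $2x$ it is $C_x\times C_x$ with $\cS_2$ acting by the swap. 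By Claim~\ref{eccoell} every elliptic fibre $C_x$ is integral, and by Item~(4) of Proposition~\ref{rigsuk} the restriction $\cF_x:=\cF|_{C_x}$ is slope-stable, hence simple, so that $H^0(C_x,End\,\cF_x)=\CC$ on every fibre, smooth or singular.

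The key step is to use that the BKR equivalence $\Phi\colon D(S^{[2]})\xrightarrow{\sim}D^{\cS_2}(S\times S)$, whose kernel is the structure sheaf of the isospectral Hilbert scheme $X_2(S)$ of Subsection~\ref{subsec:dibase}, is \emph{$\PP^2$-linear}: the two structure maps $X_2(S)\to S^{[2]}\xrightarrow{\pi}\PP^2$ and $X_2(S)\to S\times S\xrightarrow{\pi'}\PP^2$ coincide, since both factor through the unordered image pair in $(\PP^1)^{(2)}$; hence the kernel is supported on $S^{[2]}\times_{\PP^2}(S\times S)$. Consequently $\Phi$ is compatible with base change along $t\hookrightarrow\PP^2$ and carries $\cF[2]^{\pm}$ to $\cF\boxtimes\cF$ endowed with one of its two natural $\cS_2$-equivariant structures (differing by the sign character) --- this is exactly the identification underlying~\eqref{unduetre}. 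Restricting to a fibre, and using that $\cF[2]^{\pm}$ and $\cF\boxtimes\cF$ are locally free so that derived and ordinary restriction agree, I obtain for every $t$
\[
\Hom_{X_t}\!\big(\cF[2]^{\pm}|_{X_t},\,\cF[2]^{\pm}|_{X_t}\big)\;\cong\;\Hom^{\cS_2}_{(S\times S)_t}\!\big((\cF\boxtimes\cF)|_{(S\times S)_t},\,(\cF\boxtimes\cF)|_{(S\times S)_t}\big),
\]
the sign twist cancelling in the endomorphism complex, so that both cases $\pm$ compute the $\cS_2$-invariants of one and the same module. It then remains to evaluate the right-hand side: over a diagonal point $2x$, Künneth gives $\Hom_{C_x\times C_x}(\cF_x\boxtimes\cF_x,\cF_x\boxtimes\cF_x)=H^0(End\,\cF_x)\otimes H^0(End\,\cF_x)=\CC$, on which the swap fixes $\mathrm{id}\otimes\mathrm{id}$, so the invariants are $\CC$; over an off-diagonal point $x_1+x_2$ the non-equivariant $\Hom$ is $\CC\oplus\CC$ with $\cS_2$ interchanging the two summands, again with invariants $\CC$ (this recovers~\eqref{biancolatte} and Proposition~\ref{resemi} on the generic fibre). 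In every case the endomorphism space is one-dimensional, which is the claim; note that this treats the diagonal and the finitely many singular fibres on an equal footing, the only inputs being integrality of $C_x$ and simplicity of $\cF_x$.

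The hard part will be the technical justification of the base-change step over the \emph{singular} and \emph{non-reduced} fibres of $\pi$: the diagonal fibres $X_{2x}$ are reducible (a copy of $(C_x)^{[2]}$ glued to the ruled surface $\PP(T_S|_{C_x})$ along $C_x$), and over the nodal values of $\rho$ they degenerate further, so one must check that the restricted functor remains exact and still identifies the two $\Ext$-modules --- concretely, that the isospectral kernel is suitably flat/perfect over $\PP^2$. To sidestep the derived subtleties I would, as a fallback, replace $\Phi$ by the finite double cover $p\colon X_2(S)\to S^{[2]}$ and compute $\Hom_{X_t}$ directly as the space of $\cS_2$-equivariant sections of $End(X_2(\cF))$ on $p^{-1}(X_t)=\tau^{-1}(C_{x_1}\times C_{x_2})$, via finite flat base change $p_*(X_2(\cF))|_{X_t}=(p_t)_*\big(X_2(\cF)|_{p^{-1}(X_t)}\big)$; this route is elementary but must account for the ramification of $p$ along the diagonal, which is precisely where the computation is most delicate. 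Either way the arithmetic collapses to the one-line Künneth computation above.
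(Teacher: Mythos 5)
Your argument does settle the fibres over points $x_1+x_2$ off the diagonal: there $X_t\cong C_{x_1}\times C_{x_2}$, the restriction is $\cF_{x_1}\boxtimes\cF_{x_2}$ (this is~\eqref{biancolatte}), and K\"unneth plus simplicity of each $\cF_{x_i}$ on the integral curve $C_{x_i}$ gives the claim. But that is the easy half; the entire content of the proposition is the fibres over the diagonal conic, and there your proof has a genuine gap --- in fact two. First, your fibrewise input is wrong: the scheme-theoretic fibre of $S\times S\to\PP^2$ over a diagonal point $2x$ is \emph{not} $C_x\times C_x$. Writing $\rho_i$ for the pullback via the $i$-th projection of a local coordinate at $x$, this fibre is cut out by $(\rho_1+\rho_2,\,\rho_1\rho_2)=(\rho_1+\rho_2,\,\rho_1^2)$, i.e.\ it is a multiplicity-two ribbon on $C_x\times C_x$; its structure sheaf already has a two-dimensional space of global sections (it contains the nilpotent $\rho_1$). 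Likewise the fibre $X_{2x}$ of $\pi$ is non-reduced: the pullback under $\pi$ of the diagonal conic equals $2D+E$, where $D$ is the divisor of subschemes supported on a single elliptic fibre and $E$ is the divisor of non-reduced subschemes, so $X_{2x}$ is the union of $\PP(T_S|_{C_x})$ (reduced) and of the component over $C_x^{[2]}$ taken \emph{with multiplicity two}. Your K\"unneth computation is carried out on the reduced fibres, so it never sees the endomorphisms carried by the nilpotents --- and excluding precisely those is what the proposition amounts to. (One can check that upstairs the extra sections are anti-invariant under the swap, so the invariants are still $\CC$; but that is a computation on the ribbon which you never perform.) Second, the reduction itself --- identifying $\Hom_{X_t}$ of the restriction of $\cF[2]^{\pm}$ with $\cS_2$-invariant Homs on the fibre of $S\times S$ --- is exactly the hard step, and both of your routes defer it. The $\PP^2$-linearity of the BKR kernel is correct ($X_2(S)$ does lie in the fibre product), but full faithfulness of the base-changed functor at points of the discriminant, where both fibres are non-reduced, is not a formal consequence of BKR being an equivalence, and you do not prove it. The fallback through $p$ fares no better: over $t=2x$ the cover $p^{-1}(X_t)\to X_t$ is ramified along the entire two-dimensional component $\PP(T_S|_{C_x})$, which lies inside the branch divisor $E$, so there is no naive descent/invariants computation of endomorphisms of $(p_{t*}(\,\cdot\,))^{\pm}$ over that component; as you say yourself, this is ``precisely where the computation is most delicate''.

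In short, what your proposal actually establishes is what the paper already records in~\eqref{biancolatte}; the assertion about \emph{every} fibre, i.e.\ about the non-reduced diagonal fibres, is postponed in both variants of your argument, and it is not a technicality that can be waved through --- it is the proposition. Note also that the survey gives no proof of this statement but cites Proposition~6.7 of~\cite{ogmodvb}, where the work consists precisely of a direct analysis of the restriction of $\cF[2]^{\pm}$ to these degenerate fibres; any complete argument, including a repaired version of yours, must carry out that analysis rather than defer it.
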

For the proof see Proposition~6.7 in~\cite{ogmodvb}. 
\subsection{Good stable vector bundles on a generic $[(X,h)]\in \cN_e^i(d)$}\label{subsec:vblag}
\setcounter{equation}{0}
Below is an analogue  of the main result of Subsection~\ref{subsec:innocenzox}. 
\begin{prp}\label{buonacompt}
Let $i\in\{1,2\}$ and let $r_0>0$ be such that $i\equiv r_0\pmod{2}$. Suppose  that~\eqref{econ}  holds, that $e\notdivides 2d$ and that 
\begin{equation}\label{pazienza}
d>\frac{5}{16}r_0^6(r_0^2-1)(e+1).
\end{equation}
Then there exists an irreducible component $ \cN_e^i(d)^{\rm good}$ of $ \cN_e^i(d)$ such that the following holds. 
Let $[(X,H)]\in \cN_e^i(d)^{\rm good}$ be  generic, and hence Proposition~\ref{unicafibr} gives a well defined  associated Lagrangian fibration 
$\pi\colon X\to\PP^2$.
Then there exists  an $h$ slope-stable vector bundle 
$\cE$ on $X$ such that~\eqref{ele} holds, i.e.
 \begin{equation}\label{hubris}
r(\cE)=r_0^2,\quad c_1(\cE)=\frac{r_0}{i} h,\quad \Delta(\cE)  =  \frac{r(\cE)(r(\cE)-1)}{12}c_2(X),
\end{equation}
and  the pull-back of $\cE_t$ to  the normalization of  $X_t$  
  is    slope-stable (for the pull back of $h_{|X_t}$)  except possibly for a finite set of $t\in\PP^2$. 
\end{prp}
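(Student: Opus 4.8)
The plan is to construct the required bundle explicitly on a Lagrangian Hilbert square and then to spread it out over an entire component of $\cN_e^i(d)$ by a rigidity argument. First I would choose positive integers $m_0,s_0$ with $m_0+1=r_0 s_0$ and an integer $d_0$ coprime to $r_0$, arranged so that the lattice data computed below match \eqref{econ} and so that the inequality \eqref{digrande} of Proposition~\ref{rigsuk} holds; this is exactly what \eqref{pazienza} buys us, since $a(\cF[2]^{\pm})=\tfrac58 r_0^6(r_0^2-1)$ by \eqref{adieffebis}. By Claim~\ref{eccoell} there is an elliptic $K3$ surface $S\to\PP^1$ with $\NS(S)=\ZZ[D]\oplus\ZZ[C]$, $D^2=2m_0$, $D\cdot C=d_0$, $C^2=0$, and by Proposition~\ref{rigsuk} a vector bundle $\cF$ on $S$ with $v(\cF)=(r_0,D,s_0)$, $\chi(\End\cF)=2$, slope-stable for every polarization, and restricting to a slope-stable bundle on every elliptic fiber. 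I would take $\cE_0:=\cF[2]^{\pm}$ on $S^{[2]}$ as the candidate.

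Second, I would check that $\cE_0$ already carries all the prescribed data. By Proposition~\ref{yaufever} it is a locally free modular sheaf of rank $r_0^2$ whose discriminant \eqref{disceffe} is precisely the one demanded in \eqref{hubris}; a Chern-class computation (as in the proof of Proposition~\ref{yaufever}) identifies $c_1(\cE_0)$ with $\tfrac{r_0}{i}h$ for a primitive ample class $h$ satisfying $q_X(h)=e$, $q_X(h,\mu(C))=d$, with $\langle h,\mu(C)\rangle$ saturated. Setting $f:=\mu(C)$, so that $\pi\colon S^{[2]}\to\PP^2$ of Definition~\ref{laghilb} is the associated Lagrangian fibration with $q_X(f)=C^2=0$, this places $[(S^{[2]},h)]$ in $\cN_e^i(d)$. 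By \eqref{biancolatte} the restriction of $\cE_0$ to a generic fiber is $\cF_{x_1}\boxtimes\cF_{x_2}$, hence slope-stable, while by Proposition~\ref{acca20} the restriction to every fiber is simple; choosing $h$ inside the $a(\cE_0)$-suitable chamber, Proposition~\ref{lagstab} makes $\cE_0$ itself $h$ slope-stable. I would let $\cN_e^i(d)^{\rm good}$ be the irreducible component of $\cN_e^i(d)$ through $[(S^{[2]},h)]$.

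Third is the spreading-out step. As $\cF$ is stable with $\chi(\End\cF)=2$, Serre duality gives $h^0(S,\End^0\cF)=h^2(S,\End^0\cF)=0$ and then $h^1(S,\End^0\cF)=0$, so $\cF$ is spherical; hence Proposition~\ref{prp:seirigido} yields $h^p(S^{[2]},\End^0\cE_0)=0$ for all $p$ and Corollary~\ref{iacman} shows that $\Def(S^{[2]},\cE_0)\to\Def(S^{[2]},\det\cE_0)$ is smooth. Consequently $\cE_0$ deforms over the whole locus on which $\det\cE_0=\tfrac{r_0}{i}h$ stays of type $(1,1)$; as $\cN_e^i(d)^{\rm good}$ is contained in that locus, the generic $[(X,H)]\in\cN_e^i(d)^{\rm good}$ carries a locally free sheaf $\cE$ realizing \eqref{hubris}. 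Modularity survives because proportionality of $\Delta(\cE)$ to $c_2(X)$ is topological, hence locally constant (Remark~\ref{caroverb}), and $H$ slope-stability is open; by Proposition~\ref{propriostab}(1) the restriction of $\cE$ to a generic Lagrangian fiber of the fibration of Proposition~\ref{unicafibr} is again slope-stable.

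The delicate point, which I expect to be the main obstacle, is the \emph{finite bad set}. On any smooth fiber $X_t\cong A_t$ one has $\int_{A_t}\Delta(\cE_t)=0$ by Lemma~\ref{zerene} (the right-hand side of that computation is $t$-independent and vanishes, so the conclusion holds for every smooth fiber), whence a simple $\cE_t$ is automatically semi homogeneous and slope-stable by Proposition~\ref{resemi} together with Mukai's theory; thus over smooth fibers instability can only come from failure of simplicity, which is an open condition that holds on every fiber of the base model by Proposition~\ref{acca20}. Working in the relative setting over $\cN_e^i(d)^{\rm good}$ and using upper semicontinuity of the dimension of the (closed) locus of fibers whose normalized restriction is not slope-stable, it suffices to bound this locus on the explicit model $S^{[2]}$. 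The genuinely new work is therefore to analyze the pull-back of $\cE_0$ to the normalizations of the degenerate fibers, namely those over the diagonal conic and over the images of the singular elliptic fibers of $S\to\PP^1$, and to show, exploiting the product structure inherited from $\cF$, that the normalized restriction is slope-stable away from finitely many points of $\PP^2$. Combined with the semicontinuity statement this gives finiteness of the bad set for the generic member of $\cN_e^i(d)^{\rm good}$, completing the proof.
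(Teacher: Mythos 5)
Your first two steps (the choice of $m_0,s_0,d_0$, the elliptic $K3$ surface $S$, the bundle $\cF$ of Proposition~\ref{rigsuk}, and the candidate $\cE_0=\cF[2]^{\pm}$ with the invariants \eqref{hubris}) and your use of sphericality plus Corollary~\ref{iacman} to deform $\cE_0$ do follow the paper's construction. The genuine gap is in your treatment of the finite bad set, which you correctly identify as the delicate point but then attack in a way that cannot work. You propose to prove that on the model $S^{[2]}$ itself the normalized restriction of $\cE_0$ is slope-stable away from finitely many $t\in\PP^2$, and then to spread finiteness over the component by semicontinuity. But that statement on $S^{[2]}$ is \emph{false} for $r_0\ge 2$: the elliptic fibration $S\to\PP^1$ necessarily has critical values $x_0$ (all fibers are irreducible since $\rho(S)=2$, so $C_{x_0}$ is a nodal or cuspidal rational curve), and for every $y\ne x_0$ the Lagrangian fiber over $x_0+y$ is $C_{x_0}\times C_y$ with normalization $\PP^1\times C_y$. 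By \eqref{biancolatte} the restriction of $\cE_0$ there is $\cF_{x_0}\boxtimes\cF_y$, and its pull-back to the normalization is $\nu^{*}\cF_{x_0}\boxtimes\cF_y$, where $\nu^{*}\cF_{x_0}$ splits as a direct sum of line bundles on $\PP^1$ by Grothendieck's theorem; hence the pull-back is decomposable and never slope-stable. So the bad locus on $S^{[2]}$ contains the whole line $\{x_0+y\mid y\in\PP^1\}$ for each critical value $x_0$, and is one-dimensional. Semicontinuity of the fiber dimension of the bad locus can propagate finiteness \emph{from} the special fiber to nearby ones, but here the special fiber has no finiteness to propagate. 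This is why the paper proves the last clause of the Proposition directly on the generic deformation $X(b)$ (Proposition~\ref{chiave}), where the Lagrangian fibration has integral fibers and a completely different discriminant curve; the only thing that is transported from $S^{[2]}$ is simplicity of the restriction to \emph{every} fiber (Proposition~\ref{acca20}), which survives deformation by openness of simplicity together with properness of $\PP^2$, and this is combined with further arguments from~\cite{ogmodvb}.

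Two secondary problems. First, you claim that ``a simple $\cE_t$ is automatically semi homogeneous and slope-stable by Proposition~\ref{resemi} together with Mukai's theory'': Proposition~\ref{resemi} gives the opposite implication (slope-stable plus modular implies semi homogeneous), and the implication you need (simple plus vanishing of $\int\Delta$ implies slope-stable) is nowhere established in the paper; it would require a separate argument. Second, you place $[(S^{[2]},h)]$ in $\cN_e^i(d)$ and take ``the'' component through it, but ampleness (let alone $a(\cE_0)$-suitability) of the class $h=ih^{+}$ on $S^{[2]}$, which has Picard number $3$, is never verified -- note that $h$ is pinned down by \eqref{hubris}, so you are not free to choose it inside a suitable chamber -- and a non-generic point may lie on several components. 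The paper avoids both issues: ampleness, suitability and slope-stability of $\cE(b)$ are proved only for generic $b$, where $H^{1,1}_{\ZZ}(X(b))=\la h(b),f(b)\ra$ has rank $2$ and Lemma~\ref{nocamere} shows the ample cone is a single $a$-chamber, and $\cN_e^i(d)^{\rm good}$ is defined as the closure of the image of the deformation space $B^{\rm good}$, which is a component because the moduli map has finite fibers and $\dim B=\dim\cN_e^i(d)$.
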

We sketch how one gets Proposition~\ref{buonacompt} from the results in the previous subsections and we refer to Proposition~7.2 in~\cite{ogmodvb} for  a complete proof. 

Since Proposition~\ref{buonacompt} is trivially true if $r_0=1$ we may suppose that $r_0\ge 2$.
Our first observation is that one can choose a vector bundle $\cF$ as in Proposition~\ref{rigsuk} so that~\eqref{hubris} holds for  $\cE:=\cF^{\pm}[2]$. 
The proof of the result below is elementary.
\begin{lmm}\label{champollion}
Let $i\in\{1,2\}$. Let  $e,r_0$ be positive natural numbers such  that $r_0\equiv i\pmod{2}$ and~\eqref{econ} holds. Let 
\begin{equation}\label{emmezero}
m_0:=
\begin{cases}
\frac{e}{2}+\frac{(r_0-1)^2}{4} & \text{if $r_0$ is odd,} \\
\frac{e}{8}+\frac{(r_0-1)^2}{4} & \text{if $r_0$ is even.} 
\end{cases}
\end{equation}
($m_0$ is an integer by~\eqref{econ}.) There exists an integer $s_0$ such that $m_0+1=r_0 s_0$. 
\end{lmm}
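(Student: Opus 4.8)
The plan is to prove directly the cleaner statement that $m_0+1=r_0 s_0$ for an explicit integer $s_0$; this is stronger than what is asked, since it forces $m_0=r_0 s_0-1\in\ZZ$ and so simultaneously recovers the parenthetical claim that $m_0$ is an integer. I therefore will not treat integrality of $m_0$ separately. Everything reduces to substituting the congruence~\eqref{econ} into the definition of $m_0$, organized according to the residue of $r_0$ modulo $4$ (equivalently, according to which line of~\eqref{econ} is in force). There are four cases, and the strategy is the same in each: write $e$ in the form (residue)$\,+\,$(modulus)$\cdot k$ dictated by~\eqref{econ}, plug into $m_0+1$, clear denominators, and exhibit $r_0$ as an explicit factor of the resulting integer.

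Concretely, I would first record the two uniform algebraic identities
\begin{align*}
m_0+1 &= \frac{2e+r_0^2-2r_0+5}{4} \qquad (r_0 \text{ odd, so } i=1),\\
m_0+1 &= \frac{e+2r_0^2-4r_0+10}{8} \qquad (r_0 \text{ even, so } i=2),
\end{align*}
obtained by expanding $\tfrac{(r_0-1)^2}{4}+1=\tfrac{r_0^2-2r_0+5}{4}$ and combining over a common denominator. Matching the odd/even split in the definition of $m_0$ with the parity $i\equiv r_0\pmod 2$ is what guarantees that the intended line of~\eqref{econ} is the one that applies, and these identities isolate exactly the quantity whose divisibility by $r_0$ must be checked.

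For $r_0$ odd: in the subcase $r_0\equiv 1\pmod 4$, \eqref{econ} gives $e=\tfrac12(r_0-5)+2r_0 k$, hence $2e=r_0-5+4r_0 k$, and the numerator above becomes $r_0^2-r_0+4r_0 k=r_0(r_0-1+4k)$; since $r_0\equiv 1\pmod 4$ the factor $r_0-1+4k$ is divisible by $4$, cancelling the denominator and leaving $m_0+1=r_0\cdot\tfrac{r_0-1+4k}{4}$. The subcase $r_0\equiv 3\pmod 4$ is identical, starting from $e=-\tfrac12(r_0+5)+2r_0 k$ and producing the factor $r_0-3+4k$, again a multiple of $4$. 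For $r_0$ even: when $r_0\equiv 2\pmod 4$ I substitute $e=-10+8r_0 k$ to obtain numerator $2r_0^2-4r_0+8r_0 k=2r_0(r_0-2+4k)$, and $r_0-2\equiv 0\pmod 4$ makes the whole expression $\tfrac{r_0(r_0-2+4k)}{4}=r_0(k+m)$ with $r_0=4m+2$; when $r_0\equiv 0\pmod 4$, substituting $e=4r_0-10+8r_0 k$ gives numerator $2r_0^2+8r_0 k=2r_0(r_0+4k)$, and $r_0\equiv 0\pmod 4$ finishes identically, yielding $m_0+1=r_0(k+m)$ with $r_0=4m$.

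There is no genuine obstacle here; the only point requiring care is the bookkeeping of the powers of $2$. In the even cases the denominator is $8$ while $\gcd(8,r_0)\neq 1$, so divisibility by $r_0$ cannot be read off modulo $r_0$ alone, and one really must carry out the factorization to see that the leftover factor $r_0-2+4k$ (respectively $r_0+4k$) supplies the missing powers of $2$. This is precisely the step where the exact residues $-10$ and $4r_0-10$ are used, rather than arbitrary even residues, and it is the part I expect to demand the most attention in a fully written-out argument.
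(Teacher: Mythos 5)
Your proof is correct, and since the paper offers no written argument for this lemma (it is simply labelled elementary), your case-by-case substitution of~\eqref{econ} into~\eqref{emmezero} is precisely the intended computation. All four factorizations check out, including the two even cases where the residues $-10$ and $4r_0-10$ are needed to make the leftover factor supply the powers of $2$ cancelling the denominator $8$.
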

Let $i,e,r_0,m_0$ be as in Lemma~\ref{champollion}.
Suppose that $d_0$ is an integer  coprime to $r_0$  such that~\eqref{digrande} holds. Let $S$ be an elliptic $K3$ surface as in Claim~\ref{eccoell}, and let $\cF$ be a vector bundle  on $S$ as in Proposition~\ref{rigsuk}. Let
\begin{equation}\label{accapiumeno}
h^{\pm}:=\mu(c_1(\cF))-\frac{r_0\mp 1}{2}\delta
\end{equation}
where $\mu\colon H^2(S)\to H^2(S^{[2]})$ is the map in~\eqref{mappamu}.  Lastly let
\begin{equation}\label{poler}
h:=ih^{+}.
\end{equation}
Straighftorward computations give the result below.
\begin{prp}\label{rosetta}
Let  $\cE:=\cF[2]^{+}$. Then the following hold:
\begin{enumerate}
\item
 $h$ is a primitive cohomology class, $q(h)=e$ and
$q(h,H^2(X;\ZZ))=(i)$,
\item
$r(\cE)$, $c_1(\cE)$ and $\Delta(\cE)$ are given by~\eqref{hubris}.
\end{enumerate}
\end{prp}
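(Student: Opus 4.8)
The plan is to read off the rank and discriminant of $\cE=\cF[2]^{+}$ from Proposition~\ref{yaufever}, and to reduce both the assertions about $h$ and the value of $c_1(\cE)$ to arithmetic in the Beauville--Bogomolov--Fujiki lattice of $X=S^{[2]}$. I would begin by recalling the orthogonal splitting $H^2(X;\ZZ)=\mu(H^2(S;\ZZ))\oplus\ZZ\delta$, in which $2\delta$ is the class of the exceptional divisor of the Hilbert--Chow morphism and the BBF form satisfies $q(\mu(\alpha))=\alpha\cdot\alpha$, $q(\delta)=-2$, and $q(\mu(\alpha),\delta)=0$. Two inputs from $S$ will be used repeatedly: the equality $D\cdot D=2m_0$ of Claim~\ref{eccoell}, and the primitivity of $D$ in $H^2(S;\ZZ)$. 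The latter holds because $\NS(S)=\ZZ D\oplus\ZZ C$ is saturated in the unimodular $K3$ lattice and $D$ is one of its basis vectors; by unimodularity this also gives $\{D\cdot\alpha\mid\alpha\in H^2(S;\ZZ)\}=\ZZ$.

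For item~(1), since $h=ih^{+}$ with $h^{+}=\mu(D)-\tfrac{r_0-1}{2}\delta$, orthogonality yields $q(h)=i^2\,q(h^{+})=i^2\bigl(2m_0-\tfrac{(r_0-1)^2}{2}\bigr)$. Substituting the two values of $m_0$ from Lemma~\ref{champollion}---namely $2m_0=e+\tfrac{(r_0-1)^2}{2}$ when $r_0$ is odd (so $i=1$) and $2m_0=\tfrac{e}{4}+\tfrac{(r_0-1)^2}{2}$ when $r_0$ is even (so $i=2$)---the term $\tfrac{(r_0-1)^2}{2}$ cancels and both cases give $q(h)=e$. For the divisibility I would compute $q(h,\mu(\alpha))=i\,(D\cdot\alpha)$ and $q(h,\delta)=i(r_0-1)$; since $D$ is primitive the pairings $q(h,\mu(\alpha))$ range over all of $i\ZZ$ and attain the value $i$, while $q(h,\delta)\in i\ZZ$ as well, so every $q(h,\beta)$ lies in $i\ZZ$ and the ideal $\{q(h,\beta)\mid\beta\in H^2(X;\ZZ)\}$ equals $(i)$. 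Primitivity of $h$ follows from the same coordinates: in a $\ZZ$-basis extending $\mu(D)$ the class $h$ has $\mu(D)$-coordinate $i$ and $\delta$-coordinate $-\tfrac{i(r_0-1)}{2}$, so a common divisor $k$ of all coordinates divides $i$; for $i=1$ this forces $k=1$, and for $i=2$ the $\delta$-coordinate is the odd integer $-(r_0-1)$, so $k$ is odd and divides $2$, again giving $k=1$.

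For item~(2), the rank and the discriminant are immediate from Proposition~\ref{yaufever}, whose hypothesis $\chi(S,\mathit{End}\,\cF)=2$ is supplied by Proposition~\ref{rigsuk}: it gives $r(\cE)=r(\cF)^2=r_0^2$ and $\Delta(\cE)=\tfrac{r(\cE)(r(\cE)-1)}{12}c_2(X)$, matching the first and third equalities of~\eqref{hubris}. The only substantive computation, and the step I expect to be the main obstacle, is $c_1(\cE)$. I would establish $c_1(\cF[2]^{+})=r_0\,\mu(D)-\binom{r_0}{2}\delta$ by computing $c_1$ of $X_2(\cF)=q_1^{*}\cF\otimes q_2^{*}\cF$, pulling it back along the blow-up $\tau\colon X_2(S)\to S^2$, pushing it forward by the generically two-to-one map $p\colon X_2(S)\to X$, and extracting the $\cS_2$-invariant part, the coefficient $-\binom{r_0}{2}$ of $\delta$ coming from the behaviour along the exceptional divisor. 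Granting this formula, and recalling $h^{+}=\mu(D)-\tfrac{r_0-1}{2}\delta$ together with $\binom{r_0}{2}=\tfrac{r_0(r_0-1)}{2}$, one obtains $c_1(\cE)=r_0\,h^{+}=\tfrac{r_0}{i}\,h$, which is the middle equality of~\eqref{hubris}. This Chern-class bookkeeping is routine but delicate; it is carried out in detail in Proposition~5.2 of~\cite{ogmodvb}, and with it in hand the proposition is proved.
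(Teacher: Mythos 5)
Your proposal is correct and follows the same route the paper intends: the paper dispatches Proposition~\ref{rosetta} with the single remark that ``straightforward computations give the result'', and your BBF-lattice arithmetic on $H^2(S^{[2]};\ZZ)=\mu(H^2(S;\ZZ))\oplus\ZZ\delta$ (using $q(\delta)=-2$, primitivity of $D$ in the unimodular $K3$ lattice, and the two values of $m_0$ from Lemma~\ref{champollion}) is exactly that computation, with the rank and discriminant read off from Proposition~\ref{yaufever} as the paper does. Your formula $c_1(\cF[2]^{+})=r_0\,\mu(D)-\binom{r_0}{2}\delta$, deferred to Proposition~5.2 of~\cite{ogmodvb}, is the correct one (consistent with the paper's remark that the $\cF[2]^{-}$ variant replaces $\tfrac{(r_0-1)^2}{4}$ by $\tfrac{(r_0+1)^2}{4}$), and this is the same reference on which the paper itself relies.
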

\begin{rmk}
One gets an analogue of Proposition~\ref{rosetta} with  $\cF[2]^{-}$ replacing $\cF[2]^{+}$ provided
$\frac{(r_0-1)^2}{4}$ is replaced  by $\frac{(r_0+1)^2}{4}$ in~\eqref{emmezero} and $h=ih^{-}$.
\end{rmk}
Let $S$ be an elliptic $K3$ surface as in Claim~\ref{eccoell} and let  $\cF$ be a vector bundle  on $S$ as in Proposition~\ref{rigsuk}. One gets vector bundles satisfying the thesis of Proposition~\ref{buonacompt} by deforming  $S^{[2]}$ and $\cF[2]^{+}$.  

More precisely let  $X(0)=S^{[2]}$  let 
 $h(0):=h$, where $h$ is given by~\eqref{poler}, and
 let $\cE(0):=\cF[2]^{+}$.  Let $C\subset S$ be a fiber of the elliptic fibration and let 
$f(0):=\mu(\cl(C))$. Lastly let $d_0$ be as in~\eqref{neronsevero} and set
\begin{equation}
d:=i d_0.
\end{equation}
Then the sublattice $\la f(0),h(0)\ra\subset H^{1,1}_{\ZZ}(X(0))$ is saturated and
\begin{equation}\label{edizero}
q(f(0),f(0))=0,\quad q(h(0),f(0))=d,\quad q(h(0),h(0))=e.
\end{equation}
Let $\pi(0)\colon X(0)\to\PP^2$ be the Lagrangian fibration associated to the elliptic fibration of $S$. 

Let $\varphi\colon\cX\to B$ be an analytic representative of the deformations space of $(X(0),\la h(0),f(0)\ra)$ i.e.~deformations of $X(0)$ that keep $h(0)$ and $f(0)$ of Hodge type. 
We assume that $B$ is contractible. Let  $0\in B$ the base point, in particular $X(0)$ is isomorphic to $\varphi^{-1}(0)$. For $b\in B$ we let $X(b):=\varphi^{-1}(b)$. 
If $B$ is small enough, then by  Proposition~\ref{acca20} and Corollary~\ref{iacman} the vector bundle $\cE(0)$ on $X(0)$ deforms to a vector bundle $\cE(b)$ on $X(b)$ (unique up to isomorphism because $H^1(X(0),End_0\cE(0))=0$). Notice that $\la h(0),f(0)\ra$ deforms  by Gauss-Manin parallel transport to a saturated sublattice 
\begin{equation}\label{lambdabi}
\la h(b),f(b)\ra\subset H^{1,1}_{\ZZ}(X(b)).
\end{equation}
Possibly after shrinking $B$ around $0$ there exists a map $\pi\colon\cX\to \PP^2$ which  restricts to  a Lagrangian fibration $X(b)\to\PP^2$ for
 every $b\in B$, and is equal to  $\pi(0)$  on $X(0)$. If $\pi(b)$ is the restriction of $\pi$ to $X(b)$
then $f(b)=c_1(\pi(b)^{*}\cO_{\PP^2}(1))$.
\begin{prp}\label{chiave}
With  hypotheses and notation as above, the following holds.  For $b\in B$ outside a proper analytic subset  
$h(b)$ is ample and  
\begin{equation}\label{behemoth}
[(X(b),h(b))]\in\cN^i_e(d),\quad  i\equiv r_0\pmod{2}.
\end{equation}
Moreover $\cE(b)$ is $h(b)$ slope-stable, \eqref{hubris} holds  
and   the pull-back of $\cE(b)_t$ to  the normalization of  $X(b)_t$  
  is    slope-stable (for the pull back of $h(b)_{|X(b)_t}$)  except possibly for a finite set of $t\in\PP^2$.
\end{prp}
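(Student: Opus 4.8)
The plan is to transport every required property from the central fibre $b=0$, where $X(0)=S^{[2]}$ and $\cE(0)=\cF[2]^{+}$, to a general $b\in B$, exploiting that the relevant data are either topological (hence constant in the family) or upper/lower semicontinuous. First I would record the invariant part: the numbers $r(\cE(b))$, $c_1(\cE(b))$ and $\Delta(\cE(b))$ are deformation invariants, so since \eqref{hubris} holds at $b=0$ by Proposition~\ref{rosetta} it holds for every $b$; likewise the BBF pairings $q(h(b))=e$, $q(f(b))=0$, $q(h(b),f(b))=d$ and the divisibility $i$ of $h(b)$ are preserved by Gauss--Manin parallel transport, and $\langle h(b),f(b)\rangle$ stays saturated. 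In particular $\Delta(\cE(b))$ is a multiple of $c_2(X(b))$, so $\cE(b)$ is modular by Remark~\ref{caroverb}, with $a(\cE(b))=a(\cE(0))=\tfrac58 r_0^{6}(r_0^{2}-1)$ by \eqref{adieffebis}; note that \eqref{pazienza} says precisely $d>\tfrac{e+1}{2}a(\cE(b))$.

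Next I would settle ampleness and membership in the Noether--Lefschetz locus. For very general $b$ the Hodge locus forces $\NS(X(b))=\langle h(b),f(b)\rangle$, a rank-two lattice of discriminant $-d^{2}$. Since \eqref{pazienza} implies $d>10(e+1)$, Lemma~\ref{nocamere} shows there is no class of square $-2$ and, exactly as in the proof of Proposition~\ref{adatto}, there is a single $a(\cE(b))$-chamber. Hence the ample cone is a full component of the positive cone, $h(b)$ (which has $q(h(b))=e>0$) is ample, and it is automatically $a(\cE(b))$-suitable in the sense of Remark~\ref{opportuno}. Together with the invariance above this gives $[(X(b),h(b))]\in\cN^i_e(d)$ with $i\equiv r_0\pmod 2$, i.e.~\eqref{behemoth}, and Proposition~\ref{unicafibr} identifies $\pi(b)$ as the unique associated Lagrangian fibration. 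For slope-stability of $\cE(b)$ I would then argue by openness: at $b=0$ the restriction of $\cE(0)$ to a non-diagonal fibre $C_{x_1}\times C_{x_2}$ is $\cF_{x_1}\boxtimes\cF_{x_2}$ by \eqref{biancolatte}, which is slope-stable since each $\cF_{x_i}$ is (Proposition~\ref{rigsuk}(4)); slope-stability of the fibrewise restriction is an open condition on $\PP^2\times B$, so for general $b$ the restriction of $\cE(b)$ to a generic fibre of $\pi(b)$ is slope-stable, and Proposition~\ref{lagstab}(i) (applicable because $\cE(b)$ is modular and $h(b)$ is $a(\cE(b))$-suitable) yields that $\cE(b)$ is $h(b)$ slope-stable.

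The main obstacle is the last claim, that the fibres over which $\cE(b)$ fails to be fibrewise stable form a \emph{finite} set, rather than the one-dimensional bad locus present at $b=0$, where the diagonal $\PP^{1}\subset\PP^{2}$ carries the non-abelian fibres $\Sym^{2}C_x$. The key input is the vanishing $H^{p}(X(0),\End_0\cE(0))=0$ for all $p$: here $\cF$ is spherical because $v(\cF)^{2}=-2$, so Proposition~\ref{prp:seirigido} applies, and the vanishing persists for general $b$ by semicontinuity. I would then run the dichotomy of Subsection~\ref{subsec:fallito}: on the open dense locus where $\cE(b)_t$ is slope-stable it is simple semi-homogeneous by Proposition~\ref{resemi}, and a simple semi-homogeneous bundle is slope-stable, so a bad $t$ is one at which the (normalised) restriction is non-simple, i.e.~$H^{0}(X(b)_t,\End_0\cE(b)_t)\neq0$. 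If the bad locus contained a curve $D\subset\PP^{2}$, the cohomology-and-base-change argument (Corollary~2, pp.~50--51 of~\cite{mumabvar}) would force $H^{1}(X(b)_t,\End_0\cE(b)_t)\neq0$ along $D$, whence $R^{1}\pi(b)_{*}\End_0\cE(b)\neq0$ on the positive-dimensional $D$, and the Leray spectral sequence would produce $H^{2}(X(b),\End_0\cE(b))\neq0$, contradicting the vanishing; hence the bad locus is finite. The delicate points I expect to have to handle carefully are the behaviour over the finitely many singular fibres, which is exactly why the statement passes to the normalisation of $X(b)_t$, and the precise implication ``simple semi-homogeneous $\Rightarrow$ slope-stable'' (cf.~Proposition~\ref{discperp} and Proposition~\ref{quadrato}), which underlies the reduction of non-stability to non-simplicity.
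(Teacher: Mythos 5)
Your first two paragraphs essentially reproduce the paper's argument: deformation invariance of the discrete data, Lemma~\ref{nocamere} to get ampleness of $h(b)$ and a single open $a(\cE(b))$-chamber (so that $h(b)$ is $a(\cE(b))$-suitable), Proposition~\ref{rosetta} for \eqref{behemoth} and \eqref{hubris}, and openness of fibrewise stability together with \eqref{biancolatte} and Proposition~\ref{lagstab} for slope-stability of $\cE(b)$. (One small imprecision: for $X$ of Type $K3^{[2]}$ excluding classes of square $-2$ is not by itself enough to identify the ample cone with a component of the positive cone; one must also exclude the square $-10$, divisibility $2$ walls, which is what Lemma~\ref{nocamere} combined with $d>10(e+1)$ actually delivers.) The genuine gap is in your third paragraph, i.e.\ precisely in the finiteness statement that you correctly single out as the main obstacle. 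Your contradiction rests on the claim that if the bad locus contained a curve $D$, then $R^1\pi(b)_{*}\End_0\cE(b)\neq 0$ with support containing $D$ would force $H^1(\PP^2, R^1\pi(b)_{*}\End_0\cE(b))\neq 0$ and hence $H^2(X(b),\End_0\cE(b))\neq 0$. The first implication is false: a nonzero coherent sheaf supported on a curve can perfectly well have vanishing $H^1$ (push forward $\cO_{\PP^1}(n)$ with $n\geq -1$ from a line). The paper is explicit on exactly this point in Subsection~\ref{subsec:fallito}: it says that $H^1(\PP^n;R^1\pi_{*}\End_0(\cE))$ \emph{might very well} be non zero; that discussion is a heuristic explaining why one wants fibrewise simplicity, not an implication one can invoke. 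So your contradiction is not established.

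There is a second gap: your reduction of ``non-stable fibre'' to ``non-simple fibre'' is circular as written. Proposition~\ref{resemi} and Proposition~\ref{discperp} give stable $\Rightarrow$ simple semi homogeneous; to conclude that a fibre where stability fails is a fibre where simplicity fails you need the converse, simple $\Rightarrow$ stable, for these restrictions, and neither of the results you cite supplies it (Proposition~\ref{quadrato} counts the twists fixing a semi homogeneous bundle and is irrelevant here). The converse does hold, but through an ingredient absent from your proposal: modularity forces $\Delta(\cE(b)_t)=0$ on every smooth fibre, a simple bundle with $\Delta=0$ on an abelian surface has $\dim\Ext^1=2$ and is therefore semi homogeneous by Theorem~5.8 of~\cite{muksemi}, and a simple semi homogeneous bundle is slope-stable. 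This is also where the paper's proof takes a different and sounder route: its named key input is Proposition~\ref{acca20}, namely that at $b=0$ the restriction of $\cF[2]^{+}$ to \emph{every} fibre, singular ones included, is simple; by upper semicontinuity of $h^0(\End_0)$ and properness of $\PP^2$, fibrewise simplicity then holds for \emph{all} fibres for $b$ outside a proper analytic subset, and is converted into stability as above. Note finally that the global vanishing $H^p(X(b),\End_0\cE(b))=0$ on which you rely says nothing directly about restrictions to (normalizations of) the singular fibres over the discriminant curve, so the normalization caveat in the statement, which you flag but do not address, is also not handled by your argument.
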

\begin{proof}[Idea of Proof]
For a very general $b\in B$ we have
\begin{equation}\label{tempiduri}
\la h(b),f(b)\ra= H^{1,1}_{\ZZ}(X(b)).
\end{equation}
Since the set of $b\in B$ for which the thesis of the proposition holds is Zariski open, it suffices to   prove that if~\eqref{tempiduri} holds then $h(b)$ is ample, Equation~\eqref{behemoth} holds,  $\cE(b)$ is $h(b)$ slope-stable etc. Using Lemma~\ref{nocamere} one shows that $h(b)$ is ample for such $b$ (a numerical characterization of ample divisors on HK's of Type $K3^{[n]}$ is known). Equation~\eqref{behemoth} holds by Item~(1) of Proposition~\ref{rosetta}. Using again Lemma~\ref{nocamere} one shows  that there is a single open $a(\cE(b))$-chamber i.e.~the ample cone. In particular $h(b)$ is 
$a(\cE(b))$-suitable and thus $\cE(b)$ is slope stable because it restricts to a slope stable vector bundle on a generic Lagrangian fiber, see~\eqref{biancolatte}. Equation \eqref{hubris} holds by Item~(2) of Proposition~\ref{rosetta}. 
More than one argument enters into the proof of  the last statement, one of them is Item~(2)  Proposition~\ref{acca20}.  
\end{proof}

\begin{proof}[Sketch of proof of Proposition~\ref{buonacompt}]
Let $\varphi\colon\cX\to B$ be as above. Let  $B^{\rm good}\subset B$ be a non empty Zariski open such that the thesis
of Proposition~\ref{chiave}  holds for all $b\in B^{\rm good}$. Then $B^{\rm good}$ parametrizes polarized varieties $(X(b),h(b))$ for which the moduli point belongs to $\cN_e^i(d)$. Since  the moduli map $m\colon B^{\rm good}\to \cN_e^i(d)$ has finite fibers and $\dim B=\dim\cN_e^i(d)$, the Zariski closure of $m(B^{\rm good})$ is an irreducible component of $\cN_e^i(d)$ that we name  $\cN_e^i(d)^{\rm good}$. Then Proposition~\ref{buonacompt} holds by Proposition~\ref{chiave}.
\end{proof}

\subsection{Proof of Theorem~\ref{unicita}}
\setcounter{equation}{0}
\begin{prp}\label{unico}
Let $i\in\{1,2\}$. Suppose  that $r_0\equiv i\pmod{2}$,  that~\eqref{econ}  holds, that $e\notdivides 2d$ and that 
\begin{equation}\label{santapi}
d>\frac{5}{16}r_0^6(r_0^2-1)(e+1).
\end{equation}
Let $[(X,H)]\in \cN_e^i(d)^{\rm good}$ be a generic point, where $\cN_e^i(d)^{\rm good}$  is as in Proposition~\ref{buonacompt}. Then, up to isomorphism,  there exists  one and only one $h$ slope-stable vector bundle 
$\cE$ on $X$ such that~\eqref{ele} holds. 
\end{prp}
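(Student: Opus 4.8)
The existence of a bundle satisfying~\eqref{ele} is exactly Proposition~\ref{buonacompt}, so the substance of Proposition~\ref{unico} is uniqueness. The plan is to fix the bundle $\cE$ furnished by Proposition~\ref{buonacompt} and to prove that every $h$ slope-stable vector bundle $\cE'$ with the invariants~\eqref{ele} is isomorphic to $\cE$. First I would check that $\cE'$ (and $\cE$) satisfy the hypotheses of Proposition~\ref{propriostab}. Writing $m:=r_0/i$, which is an integer because $r_0\equiv i\pmod 2$, the data~\eqref{ele} read $r(\cE')=(mi)^2$ and $c_1(\cE')=m\,h$, i.e.\ hypothesis (b); the coprimality $\gcd\{mi,d/i\}=\gcd\{r_0,d_0\}=1$ holds for the component $\cN_e^i(d)^{\rm good}$, which by construction (Proposition~\ref{buonacompt}) arises from $d=i\,d_0$ with $\gcd\{r_0,d_0\}=1$. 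Since on a HK of Type $K3^{[2]}$ a bundle is modular as soon as its discriminant is a multiple of $c_2(X)$, both $\cE$ and $\cE'$ are modular and $a(\cE')=a(\cE)=\tfrac58 r_0^6(r_0^2-1)=:a_0$ by~\eqref{adieffebis}; the bound~\eqref{santapi} is precisely $d>\tfrac12 a_0(e+1)$, so hypothesis~\eqref{golfangora} holds and hypothesis (a) is satisfied.

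With the hypotheses in place, Item~(1) of Proposition~\ref{propriostab} shows that both $\cE$ and $\cE'$ restrict to a slope-stable --- hence, by Proposition~\ref{resemi}, simple semi homogeneous --- vector bundle on the generic fiber of the associated Lagrangian fibration $\pi\colon X\to\PP^2$. Item~(2) of Proposition~\ref{propriostab} then yields a genuine fiberwise isomorphism $\cE_t\cong\cE'_t$ for all $t$ in a dense open $U\subset\PP^2$. This is the heart of the matter: on an abelian surface a simple semi homogeneous bundle is not pinned down by its rank, determinant and $c_2$ (Proposition~\ref{quadrato}), and it is precisely the monodromy invariance of Proposition~\ref{modinv} that forces the otherwise free twisting line bundle to vanish and upgrades a fiberwise iso-after-twist into an honest fiberwise isomorphism.

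It then remains to globalize. I would examine the sheaf $\mathcal{H}om(\cE',\cE)$: since each $\cE_t\cong\cE'_t$ is simple, its direct image $\pi_{*}\mathcal{H}om(\cE',\cE)$ is a rank-one torsion-free sheaf on $\PP^2$, and the goal is to exhibit a nonzero global homomorphism $\cE'\to\cE$. Any such map is automatically an isomorphism: $\cE$ and $\cE'$ are $h$ slope-stable of the same rank $r_0^2$ and the same slope (they share $c_1=\tfrac{r_0}{i}h$), so a nonzero homomorphism between them has full-rank image and is forced to be an isomorphism. To control the twist I would use $\Pic\PP^2=\ZZ\cdot\cO(1)$ with $f=c_1(\pi^{*}\cO(1))$: the fiberwise isomorphisms can only glue to $\cE'\cong\cE\otimes\pi^{*}\cO(k)$, and comparing first Chern classes, the relation $c_1(\cE')=c_1(\cE)+r_0^2 k f$ together with $c_1(\cE')=c_1(\cE)$ forces $k=0$ and hence $\cE'\cong\cE$.

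The main obstacle is exactly this globalization. The fiberwise isomorphism lives only over $U$, while $\PP^2\setminus U$ contains the discriminant of the Lagrangian fibration, which is a divisor; since $\pi^{-1}$ of a divisor has codimension one in $X$, an isomorphism defined merely over $X(U)$ cannot be extended by a reflexive-hull argument. The honest route is to study $\pi_{*}\mathcal{H}om(\cE',\cE)$ over the whole base, identify its reflexive hull with some $\cO(k)$, and show $k=0$ so that it carries a nonzero global section. The composition pairing $\mathcal{H}om(\cE',\cE)\otimes\mathcal{H}om(\cE,\cE')\to\cO_X$, which is nondegenerate on generic fibers, pushes down to give $k+k'\le 0$ for the reflexive hulls of $\pi_{*}\mathcal{H}om(\cE',\cE)$ and $\pi_{*}\mathcal{H}om(\cE,\cE')$; the delicate remaining point --- and the real content of the uniqueness statement --- is to produce the complementary bound forcing $k=k'=0$, after which the resulting nonzero homomorphism $\cE'\to\cE$ is an isomorphism by stability.
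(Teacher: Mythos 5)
Your reduction to the fiberwise statement is sound and matches the paper's own route: hypotheses (a) and (b) of Proposition~\ref{propriostab} are checked exactly as you do (modularity via Remark~\ref{caroverb}, $a(\cE)=a(\cE')=\tfrac{5}{8}r_0^6(r_0^2-1)$ so that \eqref{santapi} is precisely \eqref{golfangora}, coprimality coming from the construction of the good component), and Item~(2) of that proposition --- whose engine is indeed the monodromy result, Proposition~\ref{modinv} --- gives $\cE_t\cong\cE'_t$ for $t$ in a dense open subset of $\PP^2$. But your argument stops where the paper's actually begins: you concede you cannot produce the complementary bound forcing a nonzero global homomorphism $\cE'\to\cE$, so the globalization --- which you yourself call ``the real content of the uniqueness statement'' --- is a genuine gap, not a deferred routine verification. (Note in passing that even $k=0$ would not suffice as you phrase it: $\pi_{*}\mathcal{H}om(\cE',\cE)\cong\cI_Z\otimes\cO_{\PP^2}(k)$ with $Z$ zero-dimensional, and $H^0(\cI_Z)=0$ unless $Z=\emptyset$.)

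The missing idea is the one the paper explicitly flags as key: the \emph{last sentence} of Proposition~\ref{buonacompt}. For the bundle $\cE$ produced there, the pull-back of $\cE_t$ to the normalization of $X_t$ is slope-stable for all $t$ outside a \emph{finite} set; in particular stability (hence simplicity) of $\cE_t$ persists over the discriminant divisor of $\pi$, contrary to your working assumption that the bad locus in $\PP^2$ necessarily contains that divisor. This is what makes a Hartogs-type argument viable. One must still rule out a curve $D\subset\PP^2$ of fibers over which $\cE'_t\not\cong\cE_t$ (equivalently, over which $\cE'_t$ degenerates); this is the adaptation of the second half of the proof of Proposition~\ref{enerzero} --- the contradiction argument --- which your proposal does not attempt, and it is carried out in Proposition~7.5 of~\cite{ogmodvb}. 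Once the fiberwise isomorphism is known over $V:=\PP^2\setminus\Sigma$ with $\Sigma$ finite, your own scheme closes without any delicate bound: $\pi_{*}\mathcal{H}om(\cE',\cE)$ is a line bundle on $V$, $\Pic(V)=\ZZ\cdot\cO_V(1)$ because $\Sigma$ has codimension $2$, evaluation gives $\cE'\otimes\pi^{*}\cO(k)\cong\cE$ over $\pi^{-1}(V)$, comparison of $c_1$ (the restriction $\Pic(X)\to\Pic(\pi^{-1}(V))$ is injective, again by codimension $2$) forces $k=0$, and since $\cE,\cE'$ are locally free and $\pi^{-1}(\Sigma)$ has codimension $2$ in $X$, the isomorphism over $\pi^{-1}(V)$ extends to all of $X$. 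So the reflexive-hull extension you dismissed is exactly the right tool --- but only after the finite-bad-locus statement of Proposition~\ref{buonacompt} has been brought into play.
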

Of course there exists at least  one $h$ slope-stable vector bundle 
$\cE$ on $X$ such that~\eqref{ele} holds by Proposition~\ref{buonacompt}, the new result is that it is unique up to isomorphism. 
The proof of Proposition~\ref{unico} is obtained by adapting the unicity statement in the proof of  Proposition~\ref{enerzero}. A key r\^ole is played by Proposition~\ref{propriostab} and the last sentence of Proposition~\ref{buonacompt}.
For details see Proposition~7.5 in~\cite{ogmodvb}. 

Let us prove Theorem~\ref{unicita}. If $r_0=1$ the result is trivially true, hence we may assume that $r_0\ge 2$. Let $\cX\to T_{e}^1$ and $\cX\to T_{e}^2$ be  complete families of  polarized 
 HK's of Type $K3^{[2]}$ such that~\eqref{divuno}, respectively ~\eqref{divdue}, holds - e.g.~the families parametrized by the relevant  open subsets of  suitable Hilbert schemes. Since $\cK^i_e$ is irreducible we may, and will, assume that $T_{e}^i$ is irreducible.  By passing to normalization if necessary we may  assume that $T_{e}^i$ is normal. For $t\in T_{e}^i$ we let $(X(t),h(t))$ be the corresponding polarized HK of Type $K3^{[2]}$. We let $m\colon T_e^i\to\cK^i_e$ be the moduli map, sending $t$ to $[(X(t),h(t))]$.
 
By fundamental results of Gieseker and Maruyama  there exists a  map of schemes
\begin{equation}\label{vbrel}
f\colon \cM_{e}(r_0)\to T^i_e 
\end{equation}
 such that for every $t\in T^i_e$ the (scheme theoretic) fiber $f^{-1}(t)$  is isomorphic to the (coarse) moduli space  of 
  $h(t)$ slope-stable vector bundles $\cE$ on $X(t)$ such that~\eqref{ele} holds. Moreover  $f\colon \cM_{e}(r_0)\to T^i_e$ is  of finite type by Maruyama~\cite{marubound}, and hence  $f(\cM_{e}(r_0))$ is a  constructible subset of $T^i_e$.

 By Proposition~\ref{unico} for $t$ in   a dense subset of  
 \begin{equation}\label{manydiv}
\bigcup\limits_{d\gg 0}m^{-1}(\cN_e^i(d)^{\rm good}) 
\end{equation}
 the preimage $f^{-1}(t)$ is a singleton. The set in~\eqref{manydiv} is a union of pairwise distinct divisors and hence 
 is Zariski dense in $T^i_e$. Since
$f(\cM_{e}(r_0))$ is a  constructible subset of $T^i_e$, it follows that for  generic $t\in  T^i_e$ the fiber 
$f^{-1}(t)$ is a singleton.

Let $[\cE]$ be the unique point of $f^{-1}(t)$ for $t$ a generic point of $m^{-1}(\cN_e^i(d)^{\rm good})$, where $d\gg 0$. Then $H^p(X(t), End_0\cE)=0$ by  Proposition~\ref{acca20}. Hence the last sentence of Theorem~\ref{unicita} follows from upper semicontinuity of cohomology.

 \bibliography{ref-survey-on-modular-sheaves}
 \end{document}